\newcommand{\sign}[1]{\mathrm{sgn}(#1)}
\newtheorem{theorem}{Theorem}
\newtheorem{lemma}{Lemma}
\newtheorem{corollary}{Corollary}
\newtheorem{proposition}{Proposition}
\newtheorem{definition}{Definition}
\theoremstyle{remark}
\newtheorem{remark}[theorem]{Remark}
\newtheorem{example}{Example}
 \def\beqlb{\begin{eqnarray}}\def\eeqlb{\end{eqnarray}}
 \def\beqnn{\begin{eqnarray*}}\def\eeqnn{\end{eqnarray*}}
 \def\ar{\!\!\!&}
 \def\mcr{\mathscr}
 \def\proof{\noindent{\it Proof.~~}}\def\qed{\hfill$\Box$\medskip}
\def\R{\mathbb{R}}
\def\P{\mathbb{P}}
\def\E{\mathbb{E}}
\def\FF{\mathscr{F}}
\renewcommand{\Phi}{\varPhi}
\renewcommand{\epsilon}{\varepsilon}
\renewcommand{\liminf}{\varliminf}
\renewcommand{\d}{\text{\rm\,d}}
\definecolor{mygray}{gray}{0.9}
\definecolor{deeppink}{RGB}{255,20,147}
\definecolor{mygreen}{rgb}{0.05, 0.576, 0.03}
\definecolor{myred}{rgb}{0.768, 0.09, 0.09}
\long\def\symbolfootnote[#1]#2{\begingroup
\def\thefootnote{\fnsymbol{footnote}}\footnote[#1]{#2}\endgroup}
\newcommand{\ddr}{\mathrm{d}}
\begin{document}

\title{\bf Limit theorems for continuous-state branching processes with immigration}
\author{{\textbf{Cl\'ement Foucart}\thanks{LAGA Universit\'e Sorbonne Paris Nord E--mail: foucart@math.univ-paris13.fr}\ \ \&\ \textbf{Chunhua Ma} }\thanks{School of Mathematical Sciences and LPMC, Nankai University E--mail: mach@nankai.edu.cn}\ \ \&\
\textbf{Linglong Yuan}\thanks{Department of Mathematical Sciences, University of Liverpool; Department of Mathematical Sciences, Xi'an Jiaotong-Liverpool University; E--mail:  yuanlinglongcn@gmail.com   }}
\maketitle
\begin{abstract}
A continuous-state branching process with immigration, whose branching mechanism is $\Psi$ and immigration mechanism $\Phi$, CBI$(\Psi,\Phi)$ for short, may have two different asymptotic regimes  depending on whether $\int_{0}\frac{\Phi(u)}{|\Psi(u)|}\ddr u<\infty$ or $\int_{0}\frac{\Phi(u)}{|\Psi(u)|}\ddr u=\infty$.  When $\int_{0}\frac{\Phi(u)}{|\Psi(u)|}\ddr u<\infty$, CBIs either have a limit distribution or a growth rate dictated by the branching dynamics. When $\int_{0}\frac{\Phi(u)}{|\Psi(u)|}\ddr u=\infty$, immigration  overwhelms branching dynamics. Asymptotics in this case are studied  via a non-linear time-dependent renormalization in law. Three regimes of weak convergence are  exhibited. 
Processes with critical branching mechanisms subject to a regular variation assumption are studied.  
This article proves and extends results stated by M. Pinsky in \textit{Limit theorems for continuous state branching processes with immigration} [Bull. Amer. Math. Soc. \textbf{78} (1972)]. 
\end{abstract}
 \vspace{8pt} \noindent {\bf Key words.}
{Continuous-state branching processes}, {Immigration}, {Grey's martingale}, {Limit distribution}, {Non-linear renormalization}, {Regularly varying functions}.

\noindent\textit{MSC (2020):} primary 60J80; secondary 60F05; 60F15

\section{Introduction}\label{CBI}
Continuous-state branching processes with immigration (CBI) have been defined by Kawazu and Watanabe \cite{KAW}. They are scaling limits of Galton-Watson Markov chains with immigration, see e.g. \cite[Theorem 2.2]{KAW}. Recent years have seen renewed interest in this class of Markov processes. They appear for instance as strong solutions of some stochastic differential equations with jumps, see Dawson and Li \cite{DawsonLi}, and from a more applied point of view, form an important subclass of the so-called affine processes, which are known in the financial mathematics setting for modelling interest rates, see \cite{duffie2003}. We mention for instance the works in this direction of Jiao et al. \cite{Jiao2017} and Barczy et al. \cite{Barczy} where certain CBI processes are studied from a statistical point of view. 

The asymptotic behaviors of Galton-Watson processes with immigration have been extensively studied since the seventies. We refer to the works of Cohn \cite{Cohn}, Heathcote \cite{Heathcote}, Heyde \cite{heyde1970}, Pakes \cite{Pakes} and Seneta \cite{MR0270460} and to their references. Transience and recurrence of CBIs have been characterized by Duhalde et al. \cite{Duhalde}. Fine properties of the stationary distributions of  CBI processes, when they exist, have also been recently established in Chazal et al. \cite{zbMATH06817592} and Keller-Ressel and Mijatovic \cite{KELLERRESSEL20122329}.  In the case where no stationary distribution exists, less attention has been paid to the limit theorems of CBI processes. It will certainly not be surprising that the results found in the seventies for Galton-Watson processes with immigration, have counterparts in the continuous-state and continuous-time framework. A year after Kawazu and Watanabe's founding work, Pinsky  thus published  a short note \cite{Pinsky} without proof, on the limits of CBIs. We believe it is of interest to write down some details and resume in this article the study of limit theorems for CBIs initiated by Pinsky. 

We start by proving an almost-sure convergence for CBI processes by adapting Grey's approach \cite{Grey} to the framework with immigration (Theorem \ref{theorem1}). We then  provide a  general non-linear renormalization in law (Theorem \ref{no1}).  To the best of our knowledge this latter renormalization does not appear in the literature about Galton-Watson processes with immigration. We explain now our main results. Denote respectively by $\Psi$ and $\Phi$ the branching and immigration mechanisms, we will recall their definitions in the next section.  In the case of supercritical branching, we show the existence of two distinct almost-sure asymptotic regimes according to the convergence/divergence of the integral $\int_0 \frac{\Phi(u)}{|\Psi(u)|}\ddr u$. 
When this integral converges, i.e $\int_0 \frac{\Phi(u)}{|\Psi(u)|}\ddr u<\infty$, the branching dynamics takes precedence over immigration and directs the divergence of the process towards infinity. More precisely, under the classical $L\ln L$ moment assumption (also called Kesten-Stigum condition) over the branching L\'evy measure, the CBI process grows at the same exponential rate  as the pure branching process.  
On the other hand, when it diverges, i.e $\int_0 \frac{\Phi(u)}{|\Psi(u)|}\ddr u=\infty$, immigration is so substantial that the branching, although supercritical, is somehow overtaken. So that, typically the process grows faster than the pure branching process on the event of its non-extinction. 

A similar dichotomy occurs more generally for non-critical CBI processes when we consider their long-term behaviour in law. Our main contribution is to design a non-linear time-dependent renormalization in law of a non-critical CBI($\Psi,\Phi$)-process $(Y_t,t\geq 0)$ satisfying $\int_0 \frac{\Phi(u)}{|\Psi(u)|}\ddr u=\infty$. We shall find, see Theorem \ref{no1}, a deterministic function $\lambda \mapsto r_t(\lambda)$ only depending on $\Psi$ and $\Phi$ such that 
\begin{equation}\label{nonlinearintro} r_t(1/Y_t)\underset{t\rightarrow  \infty}{\longrightarrow} \mathrm{e}_1 \text{ in  law}
\end{equation}
where $\mathrm{e}_1$ is a standard exponential random variable. The latter renormalization is actually equivalent to the following property, see Corollary \ref{cor}. Given two independent CBI$(\Psi,\Phi)$ processes $(Y_t,t\geq 0)$ and $(\tilde{Y}_t,t\geq 0)$ such that $\int_0 \frac{\Phi(u)}{|\Psi(u)|}\ddr u=\infty$, one has
\begin{equation}\label{extreme}\frac{Y_t}{\tilde{Y}_t}\underset{t\rightarrow  \infty}{\longrightarrow} \Lambda \text{ in  law}
\end{equation}
where $\mathbb{P}(\Lambda=0)=\mathbb{P}(\Lambda=\infty)=\frac{1}{2}$. As a consequence of \eqref{extreme}, we shall see  that no function $(\eta(t),t\geq 0)$ exists such that $\eta(t)Y_t$ converges in law towards a nondegenerate random variable (i.e whose support is not contained in $\{0,\infty\}$). This has been shown by Cohn \cite{Cohn} in the setting of discrete time and space. The meaning of the limit in law \eqref{nonlinearintro} will be made more explicit by introducing further assumptions on the rate of divergence of the integral $\int_0 \frac{\Phi(u)}{|\Psi(u)|}\ddr u$, namely on the speed at which $\int_\epsilon \frac{\Phi(u)}{|\Psi(u)|}\ddr u$ goes to $\infty$ as $\epsilon$ goes to $0$. In the same vein as Pakes, see \cite{Pakes}, we design three regimes of divergence: Slow (S), Log (L) and Fast (F). Each corresponds to a specific renormalization and a specific limiting law. The faster the integral diverges, the more the branching dynamics is overtaken by immigration. This is reflected by the different renormalizations occurring  in the three regimes. In particular, in the fast case (F) the branching mechanism plays no role in the renormalization. Pinsky's result \cite[Theorem 2]{Pinsky} which corresponds to the subcritical case under condition (S) has now a proof, see Remark \ref{pinskyproof}--ii), and a misprint in his statement is corrected. 

\textbf{Notation:} We denote respectively by $\overset{d}{\longrightarrow}$ and $\overset{p}{\longrightarrow}$ the convergence in law and the convergence in probability. We use the relation symbol $\sim$ when the ratio of the two terms on the two sides of it converges to $1$ (if any of the two terms is random, the convergence holds almost surely).  The probability measure and its expectation are denoted by $\mathbb{P}$ and $\mathbb{E}$. For any $x\geq 0$, $\mathbb{P}_x$ denotes the law of a CBI process started from $x$. The integrability of a function $f$ in a neighbourhood of $0$ is denoted by $\int_0 f(x)\ddr x<\infty$ (similarly  $\int^{\infty} f(x)\ddr x<\infty$ denotes the integrability of $f$ in a neighbourhood of $\infty$).  Last, we denote functions, either deterministic or random, vanishing in the limit by $o(1)$. 
 
The paper is organized as follows. First we recall in Section \ref{preliminaries} the definition of a CBI process and some of its most fundamental properties. Our main results are stated in Section \ref{results}. We first establish in Section \ref{aslimit} the almost-sure convergence results in the supercritical case. Then, Section \ref{limitinlaw} is devoted to the study of convergence in law in the non-critical case when $\int_0\frac{\Phi(u)}{|\Psi(u)|}\ddr u=\infty$. We define the three regimes (S), (L), (F) in Section \ref{threeregimes}. The last section treats some critical branching mechanisms fulfilling certain regular variation properties.

\section{Preliminaries}\label{preliminaries}
We recall hereafter the definition of a CBI process and some of its most fundamental properties. Our main references are Chapter 3 of Li's book \cite{Li} and Chapter 12 of Kyprianou's book \cite{Kyprianoubook}. We say that a random variable is  nondegenerate if its support is not contained in $\{0,\infty\}$ and proper if it is finite almost surely. 

Write $\pi$ and $\nu$ for two $\sigma$-finite nonnegative measures on $(0,\infty)$ satisfying respectively  $\int^\infty_0 (z\wedge
z^2) \,\pi(\d z)<\infty$ and $\int^\infty_0 (1\wedge
z) \,\nu(\d z)<\infty$. Consider a triple $(\sigma,b, \beta)$ such that $\sigma\geq 0, b\in \mathbb{R}$ and $\beta\geq 0$. Let $\Psi$ be the Laplace exponent of a spectrally positive L\'{e}vy process with finite mean (here we assume $|\Psi'(0+)|<\infty$, so that in particular the CBI process does not explode) and whose characteristic triple is $(b,\sigma,\pi)$. Let $\Phi$ be the Laplace exponent of a subordinator with drift $\beta$ and L\'evy measure $\nu$. They are specified by the L\'{e}vy-Khinchine formula
\begin{align*}
\Psi(q)&=bq+\frac{1}{2}\sigma^{2}q^{2}+\int_{0}^{\infty}(e^{-qu}-1+qu)\pi(\ddr u),\quad q\geq0,
\end{align*}
So $\Psi$ is convex (i.e., $\Psi''(q)\geq 0, \forall q\geq 0$) with $\Psi(0)=0$. Similarly, 
\begin{align*}
\Phi(q)&=\beta q+\int_{0}^{\infty}(1-e^{-qu})\nu(\ddr u),\quad q\geq0,
\end{align*}
So $\Phi$ is a concave continuous,  strictly increasing function with $\Phi(0)=0$. 

A CBI process with branching and immigration mechanisms $\Psi$ and $\Phi$, is a strong Markov process  $(Y_{t}, t\geq 0)$ taking values in   $[0,\infty)$ whose transition kernels are characterized by their Laplace transforms.  So for $\lambda\geq 0$ and $x\in \mathbb{R}_{+}$,  \begin{equation}\label{cumulantCBI}
\mathbb{E}_x[e^{-\lambda Y_{t}}]=\exp \left(-xv_{t}(\lambda)-\int_{0}^{t}\Phi(v_{s}(\lambda))\ddr s\right),
\end{equation}
where the map $t\mapsto v_{t}(\lambda)$ is the solution to the differential equation
\begin{equation}\label{cumulant} \frac{\partial}{\partial t}v_{t}(\lambda)=-\Psi(v_{t}(\lambda)), \quad v_{0}(\lambda)=\lambda. \end{equation}
Note that $v_{t+s}(\lambda)=v_t(v_s(\lambda))$ from the Markov property. 

Existence and unicity of CBI processes have been established in \cite[Theorem 1.1]{KAW}.
Recently Dawson and Li \cite{DawsonLi}, see also \cite{jiao20}, have shown that any CBI is the strong solution of a certain stochastic differential equation (SDE) with jumps.  

Suppose that $(\Omega,\mathscr{F}_t,\mathbb{P})$ is a filtered probability space satisfying the usual hypotheses. Let $\{B_t\}_{t\ge0}$ be an $(\mcr{F}_t)$-Brownian motion.  Let $N_0(\d s,\d z,\d u)$ and $N_1(\d s, \d u)$ denote two
$(\mcr{F}_t)$-Poisson random measures on $(0,\infty)^3$ and $(0,\infty)^2$  with
intensities $\d s \,\pi(\d z)\,\d u$ and $\d s \,\nu(\d z)$. We assume that the Brownian motion and the Poisson random measures are independent of each other.
Let $\tilde N_0(\d s,\d z,\d
u)$ be the corresponding compensated measure of $N_0$, namely $\tilde N_0(\d s,\d z,\d
u):= N_0(\d s,\d z,\d u)-\d s \,\pi(\d z)\,\d u.$  The following SDE
\begin{equation}\label{sdeB}\begin{split} Y_t = Y_0&+\sigma\int_0^t \sqrt{Y_{s}} \,\d B_s\\
&+\int_0^t\int_{0}^\infty\int_0^{Y_{s-}} z\,\tilde{N}_0(\d
s, \d z, \d u)+\int_0^t(\beta-bY_s)\d s+\int_0^t\int_{0}^\infty zN_1(\d
s, \d z).
\end{split}\end{equation}
admits a unique strong solution whose law is that of  a CBI with branching mechanism $\Psi$ and immigration mechanism $\Phi$. When there is no immigration, that is to say $\Phi \equiv 0$,  the drift $\beta$ and the Poisson random measure $N_1$ vanish and the process $(Y_t,t\geq 0)$ solution to \eqref{sdeB} is a continuous-state branching process (CB process for short) with branching mechanism $\Psi$. When $\Psi\equiv 0$, only the immigration part remains and $(Y_t,t\geq 0)$ is a subordinator with Laplace exponent $\Phi$. Lastly, we recall that a (sub)-critical CB$(\Psi)$ process conditioned on the non-extinction is a CBI$(\Psi,\Phi)$ process with $\Phi=\Psi'-\Psi'(0+)$, see Lambert \cite{Lambert2}, Li \cite[Theorem 4.1]{Li2000} and Fittipaldi and Fontbona \cite{Fontbona}. From now on, unless explicitely mentioned, we consider processes with immigration, namely $\Phi(q)>0$ for all $q>0$.

Recall the form of the function $\Psi$ and notice that $b=\Psi'(0+)$. A CBI process is said to be critical, subcritical or supercritical according as $b=0$, $b>0$ or $b<0$. Note that $\Psi$ has at most two roots. Introduce 
\beqnn
\rho=\inf\{z>0, \Psi(z)\geq 0\}, \quad \inf\varnothing=\infty. \eeqnn
We see that  $\rho=0$ if $b\geq 0$ and $\rho>0$ if $b<0$. In particular $\rho=\infty$ if and only if $-\Psi$ is the Laplace exponent of a subordinator.  By (\ref{cumulant}), if $0<\lambda<\rho$ (resp. $\lambda>\rho$), then $v_{t}(\lambda)\in[\lambda,\rho]$ is increasing (resp. $v_{t}(\lambda)\in[\rho,\lambda]$ is decreasing) in $t$. Then (\ref{cumulant}) implies 
\begin{equation}\label{cumulantintegral} \int_{v_{t}(\lambda)}^{\lambda}\frac{\ddr z}{\Psi(z)}=t, \quad\forall t\in[0,\infty),\quad \forall \lambda\in(0,\infty)/\{\rho\},
\end{equation}
Recall $\bar{v}_t:=\underset{\lambda \rightarrow \infty}{\lim \uparrow} v_{t}(\lambda)\in [0, \infty]$ and set
$\bar{v}:=\underset{t \rightarrow \infty}{\lim \downarrow} \bar{v}_{t}\in [0, \infty].$
Grey shows in \cite{Grey} that \begin{equation}\label{Grey}
\bar{v}_t<\infty \text{ for all } t>0 \text{ if and only if } \int^{\infty}\frac{\ddr q}{\Psi(q)}<\infty \text{ (Grey's condition)}.
\end{equation}
Note that $\rho\leq \bar{v}$, and if $\bar{v}<\infty$ then $\bar{v}=\rho$.

Recall \eqref{cumulantCBI}. Define the map $r_{t}(\lambda):=\int_{0}^{t}\Phi(v_{s}(\lambda))\ddr s$. A simple change of variable gives
\begin{equation}\label{J_lambda t}r_{t}(\lambda)=\begin{cases} \!\!\!\!\!\!\!&\int_{v_t(\lambda)}^{\lambda}\frac{\Phi(u)}{\Psi(u)}\ddr u \text{ if } \Psi \not\equiv 0 \\\
\!\!\!\!\!\!\!&t\Phi(\lambda) \text{ if } \Psi \equiv 0.
\end{cases}
\end{equation}

Then \eqref{cumulantCBI} can also be written as 
 \begin{equation}\label{cumulantCBI2}
\mathbb{E}_x[e^{-\lambda Y_{t}}]=\exp \left(-xv_{t}(\lambda)-r_t(\lambda) \right). 
\end{equation}
Note also that for any $t\geq 0$ and any $n\in \mathbb{N}$, $Y_t=Y_t^{1}+\cdots+Y_t^{n}$ in law where $((Y_t^{i})_{t\geq 0}, 1\leq i\leq n)$ are i.i.d copies of a CBI$(\Psi,\frac{1}{n}\Phi)$ process. So that in particular, $Y_t$ has an infinite divisible law on $\mathbb{R}_+$ and $\lambda\mapsto r_t(\lambda)$ is the Laplace exponent of a subordinator (with no killing term). For any $t\geq 0$, we set $r_t(\infty)=\int_{\bar{v}_t}^{\infty}\frac{\Phi(u)}{\Psi(u)}\ddr u\in (0,\infty]$, where $\bar{v}_t:=\underset{\lambda \rightarrow \infty}{\lim} \uparrow v_t(\lambda)$, with the convention that if $\int^{\infty}\frac{\Phi(u)}{\Psi(u)}\ddr u=\infty$ then $r_t(\infty)=\infty$ for all $t>0$. From \eqref{J_lambda t}, we easily check that $r_t(\infty)<\infty$ as soon as $\int^{\infty}\frac{\Phi(u)}{\Psi(u)}\ddr u<\infty$. Letting $\lambda$ tend to $\infty$ in \eqref{cumulantCBI2} readily entails that $r_t(\infty)<\infty$ if and only if $\mathbb{P}_x(Y_t=0)>0$. We refer the reader interested in the zero-set of CBIs to \cite{MR3263091}.

We refer to Section 3.2 of \cite{Li} for proofs of the following technical statements; see also \cite{Grey}. We gather in the next lemma analytical results on the map $\lambda\mapsto v_{t}(\lambda)$ and its inverse (whenever it exists).
\begin{lemma}\label{eta}
The map $\lambda\mapsto v_{t}(\lambda)$ is  strictly increasing on $[0,\infty)$.  For any $t\geq 0$, let $\lambda\mapsto v_{-t}(\lambda)$ be the inverse map of $\lambda\mapsto v_t(\lambda)$. This is a strictly increasing function, well-defined on $[0,\bar{v}_{t})$ which satisfies for all $s, t\geq 0$ and $0\leq \lambda<\bar{v}_{s+t}$
$$v_{-(s+t)}(\lambda)=v_{-s}(v_{-t}(\lambda)).$$
For $0\leq \lambda<\bar{v}_{t}$, such that $\Psi(\lambda)\neq 0$, by  (\ref{cumulantintegral}) one has
\begin{equation}\label{equationeta} \int_{\lambda}^{v_{-t}(\lambda)}\frac{\ddr z}{\Psi(z)}=\int_{v_{t}(v_{-t}(\lambda))}^{v_{-t}(\lambda)}\frac{\ddr z}{\Psi(z)}=t.
\end{equation}
In particular, in the supercritical case, i.e  $b\in(-\infty, 0)$, for $\lambda\in (0,\rho)$
\beqlb\label{eta integral}
\frac{\partial v_{-t}(\lambda)}{\partial t}=\Psi(v_{-t}(\lambda)), \quad v_{0}(\lambda)=\lambda.
\eeqlb
The map $t\mapsto v_{-t}(\lambda)$ is decreasing  and by letting $t\rightarrow\infty$ in (\ref{equationeta}) we see that $v_{-t}(\lambda)\underset{t\rightarrow \infty}{\longrightarrow} 0$. 
Moreover, 
 $v_{-(t+u)}(\lambda)/v_{-t}(\lambda)\underset{t\rightarrow \infty}{\longrightarrow} e^{bu}$ for any $u\geq0$.
\end{lemma}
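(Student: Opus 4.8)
The plan is to read off every assertion from the flow equation \eqref{cumulant}, its first integral \eqref{cumulantintegral}, and Grey's criterion \eqref{Grey}. The purely structural statements — strict monotonicity of $\lambda\mapsto v_t(\lambda)$, the fact that $v_t$ is a continuous strictly increasing bijection from $[0,\infty)$ onto $[0,\bar v_t)$ (so that $v_{-t}$ is well defined exactly on $[0,\bar v_t)$), and the cocycle identity — are standard and I would cite \cite[Section 3.2]{Li} and \cite{Grey}: the composition $v_{s+t}=v_s\comp v_t$ inverts to $v_{-(s+t)}=v_{-t}\comp v_{-s}$, which also equals $v_{-s}\comp v_{-t}$ by interchanging $s$ and $t$, once one checks the domains are compatible. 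Identity \eqref{equationeta} is then nothing but \eqref{cumulantintegral} with $\lambda$ replaced by $v_{-t}(\lambda)$, together with $v_t(v_{-t}(\lambda))=\lambda$.

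For the supercritical refinements I would first record that $b<0$ forces $\Psi<0$ on $(0,\rho)$, so for $\lambda\in(0,\rho)$ the curve $t\mapsto v_{-t}(\lambda)$ stays in $(0,\lambda]\subset(0,\rho)$; in particular $\Psi$ is continuous and nonvanishing along it. Differentiating \eqref{equationeta} in $t$ (equivalently, the implicit function theorem applied to $v_t(v_{-t}(\lambda))=\lambda$, using that $v_t$ is strictly increasing in $\lambda$) yields \eqref{eta integral}, and the sign of $\Psi$ on $(0,\rho)$ makes $t\mapsto v_{-t}(\lambda)$ decreasing. Being decreasing and nonnegative it converges to some $\ell\ge0$; if $\ell>0$ then $1/\Psi$ is bounded on $[\ell,\lambda]$ and \eqref{equationeta} would give $t=\int_\ell^\lambda\frac{\ddr z}{-\Psi(z)}<\infty$ for every $t$, a contradiction (concretely, $-\Psi(z)\sim|b|z$ near $0$, so $\int_0\frac{\ddr z}{-\Psi(z)}=\infty$). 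Hence $v_{-t}(\lambda)\to0$.

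The substantive point is the last display, for which I would argue as follows. From \eqref{eta integral},
\[
\frac{\ddr}{\ddr t}\ln v_{-t}(\lambda)=\frac{\Psi\big(v_{-t}(\lambda)\big)}{v_{-t}(\lambda)} .
\]
Since $v_{-t}(\lambda)\downarrow0$ and $\Psi(z)/z\to\Psi'(0+)=b$ as $z\downarrow0$, the right-hand side tends to $b$ as $t\to\infty$. Fixing $u\ge0$, for $s\in[t,t+u]$ one has $0<v_{-s}(\lambda)\le v_{-t}(\lambda)\to0$, so $\Psi(v_{-s}(\lambda))/v_{-s}(\lambda)\to b$ uniformly in $s\in[t,t+u]$, whence
\[
\ln\frac{v_{-(t+u)}(\lambda)}{v_{-t}(\lambda)}=\int_t^{t+u}\frac{\Psi\big(v_{-s}(\lambda)\big)}{v_{-s}(\lambda)}\,\ddr s\underset{t\rightarrow\infty}{\longrightarrow}bu ,
\]
and exponentiating gives $v_{-(t+u)}(\lambda)/v_{-t}(\lambda)\to e^{bu}$.

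I expect the only real care needed is the bookkeeping: checking that $v_{-t}$ is well defined and $C^1$ on the relevant range, that the cocycle identity respects domains, and that the sign conventions in \eqref{eta integral} are consistent with \eqref{cumulant}. All of this is exactly what \cite[Section 3.2]{Li} and \cite{Grey} provide, so it is a genuine obstacle only in the sense of being slightly tedious; the analytic heart of the new part of the lemma is the two–line computation above, which hinges solely on $v_{-t}(\lambda)\to0$ and $\Psi(z)/z\to b$.
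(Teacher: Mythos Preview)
Your proposal is correct. The paper does not give its own proof of this lemma at all; it simply refers to \cite[Section~3.2]{Li} and \cite{Grey} and states the result. Your write-up does the same for the purely structural assertions (monotonicity, domain of $v_{-t}$, cocycle identity, \eqref{equationeta}), which is appropriate, and then you go further and supply explicit short arguments for the supercritical statements: the ODE \eqref{eta integral} from differentiating \eqref{equationeta}, monotone decrease from $\Psi<0$ on $(0,\rho)$, $v_{-t}(\lambda)\to0$ by the divergence of $\int_0\frac{\ddr z}{-\Psi(z)}$, and the ratio limit via $\frac{\ddr}{\ddr t}\ln v_{-t}(\lambda)=\Psi(v_{-t}(\lambda))/v_{-t}(\lambda)\to b$. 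These details are exactly what one would extract from the cited references, so your approach is essentially the paper's approach made explicit; there is nothing to correct.
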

The following theorem has been announced by Pinsky \cite{Pinsky} and provides some first information on the growth rate. It has been established in the (sub)critical case by Li, see \cite[Theorem 3.20, P.66]{Li} and Keller-Ressel and Mijatovi\'{c} see \cite[Appendix]{KELLERRESSEL20122329}. 
\begin{theorem}[Pinsky \cite{Pinsky}, Li \cite{Li}]\label{PL}
Let $(Y_{t}, t\geq 0)$ be a CBI process with $|\Psi'(0+)|<\infty$. Set $\tau(t)=e^{bt}$ if $b<0$ and $\tau(t)=1$ if $b\geq 0$. The process $(\tau(t)Y_{t}, t\geq 0)$ converges in law, as $t\rightarrow \infty$, towards a proper random variable if and only if 
\begin{equation*}\label{LD}
\int_{0}\frac{\Phi(u)}{|\Psi(u)|}\ddr u<\infty.
\end{equation*}
If $\int_{0}\frac{\Phi(u)}{|\Psi(u)|}\ddr u=\infty$, then for all $z\geq 0$, $\mathbb{P}_x(\tau(t)Y_{t}\leq z)\underset{t\rightarrow \infty }\longrightarrow 0$, that is to say $(\tau(t)Y_{t}, t\geq 0)$ converges to $\infty$ in probability.
\end{theorem}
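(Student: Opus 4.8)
The plan is to run everything through the Laplace functional \eqref{cumulantCBI2}: for $\lambda\ge0$,
\[
\mathbb{E}_x\big[e^{-\lambda\tau(t)Y_t}\big]=\exp\big(-xv_t(\tau(t)\lambda)-r_t(\tau(t)\lambda)\big).
\]
By the continuity theorem for Laplace transforms on $[0,\infty]$, $(\tau(t)Y_t)$ converges in law to a proper random variable iff the right-hand side converges for every $\lambda\ge0$ to a limit right-continuous at $0$, while $\tau(t)Y_t\to\infty$ in probability iff it tends to $0$ for every $\lambda>0$ (Markov's inequality $\mathbb{P}_x(Y_t\le z)\le e^{\lambda z}\mathbb{E}_x[e^{-\lambda Y_t}]$ gives one direction, dominated convergence in $\mathbb{E}_x[e^{-\lambda Y_t}]=\int_0^\infty\lambda e^{-\lambda z}\mathbb{P}_x(Y_t\le z)\,\ddr z$ the other). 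So it suffices to identify $\lim_{t\to\infty}v_t(\tau(t)\lambda)$ and $\lim_{t\to\infty}r_t(\tau(t)\lambda)$. When $b\ge0$ this is immediate: $\tau\equiv1$, $\rho=0$, $\Psi>0$ on $(0,\infty)$, $v_t(\lambda)\downarrow0$, and by \eqref{J_lambda t}, $r_t(\lambda)=\int_{v_t(\lambda)}^{\lambda}\tfrac{\Phi(u)}{\Psi(u)}\,\ddr u\uparrow\int_0^{\lambda}\tfrac{\Phi(u)}{|\Psi(u)|}\,\ddr u$ by monotone convergence; the Laplace transform therefore tends to $\exp\big(-\int_0^{\lambda}\tfrac{\Phi(u)}{|\Psi(u)|}\,\ddr u\big)$, which is right-continuous at $0$ precisely when $\int_0\tfrac{\Phi(u)}{|\Psi(u)|}\,\ddr u<\infty$ and is identically $0$ on $(0,\infty)$ when this integral diverges.

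For $b<0$ take $\tau(t)=e^{bt}$ and set $m_t(\lambda):=v_t(e^{bt}\lambda)$. Since $e^{bt}\lambda\downarrow0<\rho$, for large $t$ we have $m_t(\lambda)\in(e^{bt}\lambda,\rho)$ and, using $\Psi<0$ on $(0,\rho)$, \eqref{J_lambda t} becomes $r_t(e^{bt}\lambda)=\int_{e^{bt}\lambda}^{m_t(\lambda)}\tfrac{\Phi(u)}{|\Psi(u)|}\,\ddr u$. The crux is to show $m_t(\lambda)$ decreases to a limit $m_\infty(\lambda)\in[0,\rho)$ with $m_\infty(0{+})=0$. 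With $h(u):=\Psi(u)-bu\ (\ge0)$, integrating $\tfrac{\ddr}{\ddr t}\log v_{-t}(\mu)=b+\tfrac{h(v_{-t}(\mu))}{v_{-t}(\mu)}$ (from \eqref{eta integral}) and changing variables gives
\[
e^{-bt}v_{-t}(\mu)=\mu\exp\Big(\int_{v_{-t}(\mu)}^{\mu}\tfrac{h(u)}{u|\Psi(u)|}\,\ddr u\Big),
\]
which, since $v_{-t}(\mu)\downarrow0$ by Lemma \ref{eta}, increases in $t$ to $\ell(\mu):=\mu\exp\big(\int_0^{\mu}\tfrac{h(u)}{u|\Psi(u)|}\,\ddr u\big)$. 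Because $\mu\mapsto e^{-bt}v_{-t}(\mu)$ is an increasing bijection of $[0,\bar{v}_t)$ onto $[0,\infty)$ and $m_t(\lambda)$ is the unique point satisfying $e^{-bt}v_{-t}(m_t(\lambda))=\lambda$, inverting this $t$-monotone family yields $m_t(\lambda)\downarrow m_\infty(\lambda)$, with $m_\infty(\lambda)=\ell^{-1}(\lambda)\in(0,\rho)$ when the $L\ln L$ integral $\int_0\tfrac{h(u)}{u|\Psi(u)|}\,\ddr u$ converges — equivalently $\int^{\infty}z\ln z\,\pi(\ddr z)<\infty$ — and $m_\infty(\lambda)=0$ otherwise (in which case $\ell\equiv\infty$ on $(0,\rho)$); in both cases $m_\infty(0{+})=0$.

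The two positive conclusions then drop out. If $\int_0\tfrac{\Phi(u)}{|\Psi(u)|}\,\ddr u<\infty$, the intervals $[e^{bt}\lambda,m_t(\lambda)]$ eventually lie in a fixed compact subinterval of $[0,\rho)$, so dominated convergence gives $r_t(e^{bt}\lambda)\to\int_0^{m_\infty(\lambda)}\tfrac{\Phi(u)}{|\Psi(u)|}\,\ddr u<\infty$; with $xv_t(e^{bt}\lambda)=xm_t(\lambda)\to xm_\infty(\lambda)$, the Laplace transform converges to $\exp\big(-xm_\infty(\lambda)-\int_0^{m_\infty(\lambda)}\tfrac{\Phi(u)}{|\Psi(u)|}\,\ddr u\big)$, right-continuous at $0$, whence $(\tau(t)Y_t)$ converges in law to a proper random variable. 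If $\int_0\tfrac{\Phi(u)}{|\Psi(u)|}\,\ddr u=\infty$ and the $L\ln L$ condition holds, then $m_\infty(\lambda)>0$, so $r_t(e^{bt}\lambda)\ge\int_{e^{bt}\lambda}^{m_\infty(\lambda)}\tfrac{\Phi(u)}{|\Psi(u)|}\,\ddr u\to\infty$ and the Laplace transform tends to $0$, i.e. $\tau(t)Y_t\to\infty$ in probability.

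I expect the real difficulty to be the remaining case: supercritical branching with $\int_0\tfrac{\Phi(u)}{|\Psi(u)|}\,\ddr u=\infty$ and the $L\ln L$ condition failing. There $m_t(\lambda)\to0$, so $r_t(e^{bt}\lambda)=\int_{e^{bt}\lambda}^{m_t(\lambda)}\tfrac{\Phi(u)}{|\Psi(u)|}\,\ddr u$ is the integral of a density that is non-integrable at $0$ over an interval whose two endpoints both collapse to $0$; deducing $r_t(e^{bt}\lambda)\to\infty$ forces a comparison of the rate at which $e^{bt}\lambda$ tends to $0$ (exponential, rate $|b|$) and the rate of $v_t(e^{bt}\lambda)$ (governed by the divergence of $\int_0\tfrac{h(u)}{u|\Psi(u)|}\,\ddr u$) against the divergence of $\int_0\tfrac{\Phi(u)}{|\Psi(u)|}\,\ddr u$ near $0$. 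An alternative I would consider is to decompose $Y$ by the branching property into a $\cb(\Psi)$ part and an independent immigration part, handling the former with its almost-sure asymptotics (cf. Theorem \ref{theorem1}) and the latter directly. Apart from this point, the whole argument is monotone- and dominated-convergence bookkeeping built on \eqref{cumulantCBI2}, \eqref{cumulantintegral}, \eqref{J_lambda t} and Lemma \ref{eta}.
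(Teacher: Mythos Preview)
The paper does not prove Theorem~\ref{PL}. It is stated in the Preliminaries section as a known result: the (sub)critical case is attributed to Li \cite[Theorem~3.20]{Li} and Keller-Ressel--Mijatovi\'c \cite{KELLERRESSEL20122329}, while the supercritical case is attributed only to Pinsky's announcement \cite{Pinsky}. So there is no ``paper's own proof'' to compare your attempt against.

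That said, your proposal is sound where it is complete. The (sub)critical part ($b\ge 0$) is exactly the standard Laplace-transform argument (monotone convergence on $r_t(\lambda)=\int_{v_t(\lambda)}^\lambda\Phi/\Psi$) and matches what one finds in Li's book. In the supercritical case your construction of $m_t(\lambda)=v_t(e^{bt}\lambda)$ via the monotone family $\mu\mapsto e^{-bt}v_{-t}(\mu)\uparrow \ell(\mu)$ is clean, and your treatment of the two ``easy'' sub-cases (either $\int_0\Phi/|\Psi|<\infty$, or $\int_0\Phi/|\Psi|=\infty$ together with $L\ln L$) is correct.

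The gap you flag --- supercritical, $\int_0\Phi/|\Psi|=\infty$, $L\ln L$ failing --- is genuine, and neither of your suggested fixes closes it straightforwardly. The decomposition $Y_t=X_t+Y_t(0)$ does not help: when $L\ln L$ fails one has $e^{bt}X_t\to 0$ a.s.\ for the CB part, so you are thrown back on $r_t(e^{bt}\lambda)$ for the immigration part. Changing variables via $\varphi$ gives $r_t(e^{bt}\lambda)=\int_{A_t}^{A_t+t}\Phi(g(w))\,\ddr w$ with $A_t=\varphi(e^{bt}\lambda)-t$; using $h(u)/|\Psi(u)|\to 0$ one can show $A_t=o(t)$, but $A_t\to\infty$ as well, and for a decreasing non-integrable density this is \emph{not} enough by itself to force the integral to diverge (e.g.\ $\int_{t^{1/2}}^{t^{1/2}+t}\tfrac{\ddr w}{w\ln w}\to \ln 2$). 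Completing the argument requires exploiting the joint dependence of $A_t$ (through $h$, i.e.\ $\Psi$) and $\Phi\circ g$ on $\Psi$, not just the separate orders of magnitude. The paper does not supply this step either; it simply invokes Pinsky.
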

Our paper aims to enrich the above Theorem \ref{PL} by studying almost-sure limits in the  supercritical case and finding new results on the growth rates when $\Psi$ is non-critical and $\int_{0}\frac{\Phi(u)}{|\Psi(u)|}\ddr u=\infty$. It is clear that a supercritical CBI process $(Y_t,t\geq 0)$ is transient i.e. $Y_t\underset{t\rightarrow \infty}{\longrightarrow} \infty$ a.s. The properties of transience and recurrence for subcritical and critical CBI processes have been studied in Duhalde et al. \cite{Duhalde}. It is established in \cite[Theorem 3]{Duhalde} that a (sub)critical CBI$(\Psi,\Phi)$ process is recurrent or transient according as
\beqlb\label{transient}
\mathcal{E}:=\int_0 \frac{\ddr x}{\Psi(x)}\exp\left(-\int_{x}^{1}\frac{\Phi(u)}{\Psi(u)}\ddr u\right)=\infty\ \mbox{or}\  <\infty.
\eeqlb
We see that the integral $\int_{0}\frac{\Phi(u)}{|\Psi(u)|}\ddr u$ plays a crucial role in this integral test. In particular, it is worth noticing that in the (sub)critical case, the divergence  of $\int_{0}\frac{\Phi(u)}{|\Psi(u)|}\ddr u$ is necessary for the CBI process to be transient but not sufficient. 
%
%
%
%

The notion of regularly varying functions will be used in several places. Recall that a function $R$ is  regularly varying at $\infty$ (resp. at $0$) with index $\theta\in \mathbb{R}$ if for any $\lambda>0$
\begin{equation*}\label{RV}\frac{R(\lambda x)}{R(x)}\rightarrow\lambda^{\theta}\text{ as } x\rightarrow \infty \,(\text{resp. } 0).\end{equation*}
The function $R$ is said to be \textit{slowly varying} if  $\theta=0$ and if $R$ is regularly varying with index $\theta$, then $R$ has the form $R(x)=x^{\theta}L(x)$ for all $x\geq 0$ with $L$ a slowly varying function. We stress that those functions occur naturally in the study as for instance, in the supercritical case, i.e. $b\in(-\infty,0)$, Lemma \ref{eta} ensures that the function $t\mapsto v_{-\ln(t)}(\lambda)$ is regularly varying at $\infty$ with index $b$. 
We refer the reader to Bingham et al. \cite{Bingham87} for a reference on those functions.
\section{Results}\label{results}
\subsection{Almost-sure limits}\label{aslimit}
This section deals with the so-called Seneta-Heyde norming for CBI processes. We refer to Seneta \cite{MR0270460} and Heyde \cite{heyde1970} for the seminal papers in the discrete setting; see also Lambert \cite{Lambert2}. When no immigration is taken into account, namely $\Phi\equiv 0$, this study has been done by Grey \cite{Grey} and Bingham \cite{BINGHAM1976217}. We refer the reader for instance to the end of Chapter 12 of Kyprianou's book \cite{Kyprianoubook}. 

Recall $\rho \in [0,\infty]$ the largest root of $\Psi$,  the map $t\mapsto v_{-t}(\lambda)$ and its equation \eqref{eta integral}.
\begin{theorem}\label{theorem1} Let $(Y_{t}, t\geq 0)$ be a CBI$(\Psi, \Phi)$ with a supercritical branching mechanism $\Psi$ i.e $b<0$. Let $0<\lambda<{{\rho}}$. Then,
\begin{itemize}
\item[i)] if $\int_{0}\frac{\Phi(u)}{|\Psi(u)|}\ddr u<\infty$ then $v_{-t}(\lambda)Y_t\underset{t\rightarrow \infty}{\longrightarrow} W^{\lambda}$ $\mathbb{P}_x$-a.s., 
where $W^{\lambda}$ is a non-degenerate proper random variable with Laplace exponent 
\begin{equation}\label{WLK}
\mathbb{E}_x\left[e^{-\theta W^{\lambda}}\right]=\exp\left(-xv_{-\frac{\ln\theta}{b}}(\lambda)+\int_{0}^{v_{-\frac{\ln\theta}{b}}(\lambda)}\frac{\Phi(u)}{\Psi(u)}\ddr u\right),
\end{equation}
\item[ii)] if $\int_{0}\frac{\Phi(u)}{|\Psi(u)|}\ddr u=\infty$ then $v_{-t}(\lambda)Y_t\underset{t\rightarrow \infty}{\longrightarrow} \infty$ $\mathbb{P}_x$-a.s.
\end{itemize}
\end{theorem}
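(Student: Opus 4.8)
The plan is to adapt Grey's martingale argument \cite{Grey} to the presence of immigration. Fix $0<\lambda<\rho$ and set $W_t:=v_{-t}(\lambda)Y_t$, which is well defined since $\lambda<\rho\leq\bar v_t$ for every $t\geq 0$ (recall $\rho\leq\bar v$ and $t\mapsto\bar v_t$ is nonincreasing). Using the Markov property of $(Y_t)$, formula \eqref{cumulantCBI2} at $\mu=v_{-(t+s)}(\lambda)$, and the identities $v_s(v_{-(t+s)}(\lambda))=v_{-t}(\lambda)$ and $r_s(v_{-(t+s)}(\lambda))=\int_{v_{-t}(\lambda)}^{v_{-(t+s)}(\lambda)}\frac{\Phi(u)}{\Psi(u)}\ddr u$ (both coming from Lemma \ref{eta} and \eqref{J_lambda t}), one gets
\[
\mathbb{E}\bigl[e^{-W_{t+s}}\mid\mathscr{F}_t\bigr]=e^{-W_t}\exp\Bigl(-\int_{v_{-t}(\lambda)}^{v_{-(t+s)}(\lambda)}\tfrac{\Phi(u)}{\Psi(u)}\ddr u\Bigr).
\]
Since $t\mapsto v_{-t}(\lambda)$ is strictly decreasing and $\Psi<0$ on $(0,\rho)$, the exponential factor lies in $(0,1]$, so $(e^{-W_t})_{t\geq0}$ is a bounded nonnegative right-continuous supermartingale, hence converges $\mathbb{P}_x$-a.s.; equivalently $W_t\to W^\lambda$ $\mathbb{P}_x$-a.s.\ for some $[0,\infty]$-valued $W^\lambda$. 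Bounded convergence then reduces everything to analysing the two limits in
\[
\mathbb{E}_x\bigl[e^{-\theta W^\lambda}\bigr]=\lim_{t\to\infty}\mathbb{E}_x\bigl[e^{-\theta v_{-t}(\lambda)Y_t}\bigr]=\lim_{t\to\infty}\exp\bigl(-x\,v_t(\theta v_{-t}(\lambda))-r_t(\theta v_{-t}(\lambda))\bigr),\qquad\theta>0.
\]

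The analytic heart of the argument is the claim that, for each fixed $\theta>0$, $v_t(\theta v_{-t}(\lambda))\to v_{-\ln\theta/b}(\lambda)$. Writing $u^\ast=\tfrac{\ln\theta}{b}$, I would use the last assertion of Lemma \ref{eta}, namely $v_{-(t+u)}(\lambda)/v_{-t}(\lambda)\to e^{bu}$ (valid for all $u\in\mathbb{R}$ after inverting the ratio): since $e^{b(u^\ast\pm\delta)}=\theta e^{\pm b\delta}$ with $b<0$, for small $\delta>0$ one has $v_{-(t+u^\ast+\delta)}(\lambda)<\theta v_{-t}(\lambda)<v_{-(t+u^\ast-\delta)}(\lambda)$ for all large $t$; applying the increasing map $v_t$ and using $v_t(v_{-(t+c)}(\lambda))=v_{-c}(\lambda)$ squeezes $v_t(\theta v_{-t}(\lambda))$ between $v_{-(u^\ast+\delta)}(\lambda)$ and $v_{-(u^\ast-\delta)}(\lambda)$, and $\delta\downarrow0$ together with the continuity of $c\mapsto v_{-c}(\lambda)$ (from \eqref{equationeta}) gives the claim. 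In particular $\theta v_{-t}(\lambda)\to0$ and $v_t(\theta v_{-t}(\lambda))\to v_{-u^\ast}(\lambda)\in(0,\rho)$, whence by \eqref{J_lambda t} and $\Psi<0$ on $(0,\rho)$,
\[
r_t\bigl(\theta v_{-t}(\lambda)\bigr)=\int_{\theta v_{-t}(\lambda)}^{v_t(\theta v_{-t}(\lambda))}\frac{\Phi(u)}{|\Psi(u)|}\ddr u\;\underset{t\to\infty}{\longrightarrow}\;\int_{0}^{v_{-\ln\theta/b}(\lambda)}\frac{\Phi(u)}{|\Psi(u)|}\ddr u\ \in(0,\infty].
\]
If the limiting integral is finite (case i)), split it at $v_{-u^\ast}(\lambda)$ and use integrability of $\Phi/|\Psi|$ near $0$ together with its boundedness near the interior point $v_{-u^\ast}(\lambda)<\rho$; if it is infinite, only the lower bound $v_t(\theta v_{-t}(\lambda))\geq\tfrac12 v_{-u^\ast}(\lambda)$ (eventually) and Fatou's lemma are needed.

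Combining the two displays, $\mathbb{E}_x[e^{-\theta W^\lambda}]=\exp\bigl(-x\,v_{-\ln\theta/b}(\lambda)-\int_0^{v_{-\ln\theta/b}(\lambda)}\tfrac{\Phi(u)}{|\Psi(u)|}\ddr u\bigr)$. In case i), $\int_0\tfrac{\Phi(u)}{|\Psi(u)|}\ddr u<\infty$, this is strictly positive and, since $-\int\tfrac{\Phi}{|\Psi|}=\int\tfrac{\Phi}{\Psi}$ on $(0,\rho)$, coincides with \eqref{WLK}; letting $\theta\downarrow0$ one has $\tfrac{\ln\theta}{b}\to+\infty$, hence $v_{-\ln\theta/b}(\lambda)\to0$ by Lemma \ref{eta}, so $\mathbb{E}_x[e^{-\theta W^\lambda}]\to1$ and $W^\lambda$ is proper, while at $\theta=1$, $\mathbb{E}_x[e^{-W^\lambda}]=\exp(-x\lambda-\int_0^\lambda\tfrac{\Phi(u)}{|\Psi(u)|}\ddr u)<1$ because $\Phi>0$ on $(0,\lambda)$, so $W^\lambda$ is not a.s.\ zero and is nondegenerate; this proves i). In case ii), $\int_0\tfrac{\Phi(u)}{|\Psi(u)|}\ddr u=\infty$, the integral in the formula is $+\infty$ for every $\theta>0$ while $x\,v_{-\ln\theta/b}(\lambda)\leq x\rho<\infty$, so $\mathbb{E}_x[e^{-\theta W^\lambda}]=0$ for all $\theta>0$, i.e.\ $W^\lambda=\infty$ $\mathbb{P}_x$-a.s., which is ii).

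I expect the main obstacle to be the convergence $v_t(\theta v_{-t}(\lambda))\to v_{-\ln\theta/b}(\lambda)$ and the accompanying control of $r_t(\theta v_{-t}(\lambda))$: one has to upgrade the pointwise scaling limit of Lemma \ref{eta} into the two-sided squeeze above and then justify interchanging limit and integral in the definition of $r_t$, with the divergent case handled separately via a positive lower bound on the upper endpoint.
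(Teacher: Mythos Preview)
Your proof is correct and follows essentially the same route as the paper: Grey's supermartingale argument (the paper packages it as the martingale $M_t^\lambda=\kappa_\lambda(t)e^{-v_{-t}(\lambda)Y_t}$, your conditional-expectation computation is the same identity), followed by computing the limiting Laplace transform via $v_t(\theta v_{-t}(\lambda))\to v_{-\ln\theta/b}(\lambda)$. Two small differences worth noting: the paper cites \cite{DuqLab,Li} for the key analytical limit whereas you supply an explicit squeeze argument from the regular-variation relation in Lemma~\ref{eta}; and for non-degeneracy the paper lets $\theta\to\infty$ to obtain the sharper conclusion $\mathbb{P}_x(W^\lambda=0)=0$ (using $\int^\rho\ddr u/\Psi(u)=-\infty$), while your check at $\theta=1$ only gives $\mathbb{P}_x(W^\lambda>0)>0$, which is nonetheless exactly what ``non-degenerate'' requires here.
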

\begin{remark}\label{rem1} \begin{itemize} \item[i)] It is worth mentioning that Grey \cite[Theorem 2]{Grey} found the same almost-sure renormalization $v_{-t}(\lambda)$ for the supercritical process without immigration, namely $\Phi\equiv 0$, on the event of non-extinction. See also Duquesne and Labb\'e \cite[Lemma 2.2]{DuqLab} for the expression of the Laplace transform \eqref{WLK} with $\Phi\equiv 0$. As explained in the introduction, Theorem \ref{theorem1} reflects the fact that two regimes occur according to the convergence/divergence of the integral $\int_{0}\frac{\Phi(u)}{|\Psi(u)|}\ddr u$. In case i), the branching dictates the growth of the process, in case ii) it is overwhelmed by the immigration.  Moreover, when $\int^\infty (x\ln x) \pi(\ddr x)<\infty$, we have $v_{-t}(\lambda)\underset{t\rightarrow \infty}{\sim} K_{\lambda}e^{bt}$ for some constant $K_\lambda>0$. So in this case, as mentioned in the introduction, the CBI process grows almost-surely exponentially fast.
\item[ii)] In the non-critical case, $|\Psi(u)|\underset{u\rightarrow 0}{\sim} |b|u$  and the condition $\int_0\frac{\Phi(u)}{|\Psi(u)|}\ddr u<\infty$ is equivalent to $\int^{\infty}\ln(u)\nu(\ddr u)=\infty$ where $\nu$ is the immigration measure $\nu$, see the forthcoming Remark \ref{logassumption}. 
\end{itemize}
\end{remark}

\begin{proof}
The proof follows from a simple adaptation of Grey's martingales, see \cite{Grey}, to the setting of CBIs. Consider $(Y_{t}, t\geq 0)$ a CBI$(\Psi, \Phi)$. Recall \eqref{cumulantCBI2}. Fix $\lambda \in (0, \rho)$. For every $t\geq 0$, set $\kappa_{\lambda}(t):=\exp\left(\int_{\lambda}^{v_{-t}(\lambda)}\frac{\Phi(u)}{\Psi(u)}\ddr u\right)$.  We show that the process $(M_t^{\lambda}, t\geq 0)$ defined for any $t\geq 0$ by $M^{\lambda}_{t}=\kappa_{\lambda}(t)\exp\left(-v_{-t}(\lambda)Y_{t}\right)$ is a positive martingale. The random variables $M^{\lambda}_{t}$ are plainly integrable and for any $s,t\geq 0$,
\begin{align*}
\E[M_{t+s}^{\lambda}|\FF_t]&=\kappa_{\lambda}(t+s)\E[\exp(-v_{-(t+s)}(\lambda)Y_{t+s})\lvert Y_t] \quad \text{\it (by the Markov property)}&\\
&=\kappa_{\lambda}(t+s)\exp\left(-Y_tv_s(v_{-(t+s)}(\lambda))-\int_{v_s(v_{-(t+s)}(\lambda))}^{v_{-(t+s)}(\lambda)}\frac{\Phi(u)}{\Psi(u)}\ddr u\right)&\\
&=\kappa_{\lambda}(t+s)\exp\left(\int_{v_{-(t+s)}(\lambda)}^{v_{-t}(\lambda)}\frac{\Phi(u)}{\Psi(u)}\ddr u\right)\exp\big(-Y_tv_{-t}(\lambda)\big) &\\
&=\kappa_{\lambda}(t)\exp\big(-Y_tv_{-t}(\lambda)\big)=M_t^{\lambda},&
\end{align*}
where the second equality follows from (\ref{cumulantCBI2}) and the third from the fact that $ v_s(v_{-(t+s)}(\lambda))=v_s(v_{-s}\circ v_{-t}(\lambda))=v_{-t}(\lambda)$. 
In particular, we see that the process $(e^{-v_{-t}(\lambda)Y_{t}(x)}, t\geq 0)$ is a positive supermartingale. This entails that the process $(
v_{-t}(\lambda)Y_{t}, t\geq 0)$ converges, as $t$ goes to infinity, $\mathbb{P}_x$-almost-surely in $\bar{\mathbb{R}}_{+}{:=[0,\infty]}$. Denote its limit by $W^{\lambda}$. We shall see that it is infinite almost-surely in case ii).

Since $\Psi$ is supercritical, one has $\rho>0$,  and $\Psi(u)<0$ for $0<u<\rho$. By Lemma \ref{eta}, we know that when $0<\lambda<\rho$, 
$v_{-t}(\lambda)\underset{t\rightarrow \infty}\longrightarrow 0$.
For fixed $\theta> 0$, we choose $t$ large enough such that $v_{-t}(\lambda), \theta v_{-t}(\lambda)\in (0,\rho)$. 
Recall that by assumption $b\in (-\infty,0)$. Then
{it can be established from Equations \eqref{cumulant} and \eqref{equationeta}, see e.g. \cite[Lemma 2.2]{DuqLab} and \cite[Theorem 3.13]{Li} for details, that 
\begin{equation}\label{kappaconv}v_t(\theta v_{-t}(\lambda))\underset{t\rightarrow \infty}\longrightarrow v_{-\frac{\ln \theta}{b}}(\lambda).\end{equation} 
Note that the above convergence holds no matter {{the sign of $-\frac{\ln \theta}{b}$}}.
}
By \eqref{cumulantCBI2}, one has for all $t\geq 0$
\begin{equation*}\label{laplaceW} \mathbb{E}_x[e^{-\theta v_{-t}(\lambda)Y_{t}}]=\exp\left(-xv_t(\theta v_{-t}(\lambda))-r_t(\theta v_{-t}(\lambda))\right)
\end{equation*}
and by letting $t$ to $\infty$ in the right-hand side above we get the expression \eqref{WLK} for the Laplace transform of $W^{\lambda}$. It is not hard to see that $v_{-\frac{\ln \theta}{b}}(\lambda)\in(0,\rho)$ as $\int_0\frac{\ddr z}{\Psi(z)}=-\infty$ and $\int_\lambda^\rho\frac{\ddr z}{\Psi(z)}=-\infty$. In fact, if $\rho<\infty$, then $\Psi(\rho)=0$; if $\rho=\infty$, then $-\Psi$ is the Laplace exponent of a subordinator which implies that $\int^{\infty}\frac{\ddr z}{\Psi(z)}=-\infty$.
Moreover by  (\ref{equationeta}), 
\begin{equation}\label{kappa}v_{-\frac{\ln \theta}{b}}(\lambda)\underset{\theta \rightarrow 0}\longrightarrow 0\text{ and }
v_{-\frac{\ln \theta}{b}}(\lambda)\underset{\theta \rightarrow \infty}\longrightarrow \rho.\end{equation}
By using the first convergence in \eqref{kappa}, we observe that if $\int_{0}\frac{\Phi(u)}{|\Psi(u)|}\ddr u<\infty$, then $W^{\lambda}<\infty$ $\mathbb{P}_x$-almost-surely. Applying the second convergence in \eqref{kappa} and the fact that $\int^{\rho}\frac{\ddr u}{\Psi(u)}=-\infty$, we get
$\mathbb{P}(W^{\lambda}_x=0)=\exp\left(-x\rho+\int_{0}^{\rho}\frac{\Phi(u)}{\Psi(u)}\ddr u\right)=0.$
If $\int_{0}\frac{\Phi(u)}{|\Psi(u)|}\ddr u=\infty$, then $\int_{0}\frac{\Phi(u)}{\Psi(u)}\ddr u=-\infty$ as $\Psi(u)<0$ for $0<u<\rho$. We get from by (\ref{WLK}) that $\mathbb{E}_x[e^{-\theta W^{\lambda}}]=0$  for any $\theta>0$, therefore $W^{\lambda}=\infty$ almost-surely.
\qed
\end{proof}

The next theorem sheds some light on what limit theorems can be expected in the case $\int_{0}\frac{\Phi(u)}{|\Psi(u)|}\ddr u=\infty$. 

\begin{theorem}\label{noaslimit} Let $(Y_t,t\geq 0)$ be a supercritical CBI$(\Psi,\Phi)$. Assume $\int_{0}\frac{\Phi(u)}{|\Psi(u)|}\ddr u=\infty$. 
For any positive deterministic function  $(\eta(t),t\geq 0)$, if
$(\eta(t)Y_t,t\geq 0)$ converges almost surely then its limit is either $0$ almost surely or $\infty$ almost surely.
\end{theorem}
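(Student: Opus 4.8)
The strategy is to leverage Theorem~\ref{theorem1}~ii) together with a $0$--$1$ law argument. Fix a supercritical CBI$(\Psi,\Phi)$ with $\int_0 \frac{\Phi(u)}{|\Psi(u)|}\ddr u = \infty$, and suppose $(\eta(t)Y_t,t\ge 0)$ converges almost surely to some limit $L \in [0,\infty]$. Pick any $\lambda \in (0,\rho)$. From Theorem~\ref{theorem1}~ii) we already know $v_{-t}(\lambda)Y_t \to \infty$ a.s. The plan is to compare $\eta(t)$ with $v_{-t}(\lambda)$ along $t\to\infty$ and argue that the ratio $\eta(t)/v_{-t}(\lambda)$ cannot conspire to produce a finite nonzero limit for $\eta(t)Y_t$ unless $L\in\{0,\infty\}$.

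First I would rule out the possibility that $L$ is a finite, strictly positive constant. Suppose $0 < L < \infty$ a.s.\ (or even on an event of positive probability). Then $Y_t \sim L/\eta(t)$ a.s.\ on that event, so $v_{-t}(\lambda)Y_t \sim L\, v_{-t}(\lambda)/\eta(t)$; since the left side tends to $\infty$ a.s., we must have $v_{-t}(\lambda)/\eta(t) \to \infty$. On the other hand, $Y_t\to\infty$ a.s.\ (the process is supercritical, hence transient), which forces $\eta(t)\to 0$, but more importantly one can extract a contradiction with the structure of $L$: the key point is that $L$ being a deterministic-rate-normalized limit of the Markov process $Y$ should be measurable with respect to the tail $\sigma$-field, hence a.s.\ constant, and a nondegenerate constant is incompatible with $v_{-t}(\lambda)Y_t\to\infty$ combined with the Laplace-transform computation: if $\eta(t)Y_t\to L$ in law with $L$ a positive constant, then $\mathbb{E}_x[e^{-\theta\eta(t)Y_t}]\to e^{-\theta L}$, yet writing $\eta(t) = (\eta(t)/v_{-t}(\lambda))\,v_{-t}(\lambda)$ and using $r_t,v_t$ asymptotics as in the proof of Theorem~\ref{theorem1} shows the limit must be either $1$ (if $\eta(t)/v_{-t}(\lambda)\to 0$) or $0$ (if $\eta(t)/v_{-t}(\lambda)\to\infty$, by Theorem~\ref{PL}-type reasoning since $\int_0\frac{\Phi}{|\Psi|}=\infty$), never $e^{-\theta L}$ for $0<L<\infty$. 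The case where $\eta(t)/v_{-t}(\lambda)$ has no limit is handled by passing to subsequences.

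More carefully, I would proceed as follows: since a.s.\ convergence implies convergence in law, $\phi_t(\theta):=\mathbb{E}_x[e^{-\theta\eta(t)Y_t}] = \exp(-xv_t(\eta(t))-r_t(\eta(t)))$ must converge for each $\theta>0$ to $\mathbb{E}_x[e^{-\theta L}]$, which is either identically $1$ (iff $L=0$ a.s.), identically $0$ (iff $L=\infty$ a.s.), or strictly between for some $\theta$ (the nondegenerate case). In the nondegenerate case, along a subsequence $t_n$ we may assume $\eta(t_n)/v_{-t_n}(\lambda)$ converges in $[0,\infty]$. If it converges to $c\in(0,\infty)$, then $\theta\eta(t_n) \approx c\theta\, v_{-t_n}(\lambda) = v_{-t_n}(v_{t_n}(c\theta v_{-t_n}(\lambda)))$ and using \eqref{kappaconv}-style convergence $v_{t_n}(c\theta v_{-t_n}(\lambda))\to v_{-\frac{\ln(c\theta)}{b}}(\lambda)$, one reads off from \eqref{cumulantCBI2} that $\phi_{t_n}(\theta)\to\exp(-xv_{-\frac{\ln(c\theta)}{b}}(\lambda)+\int_0^{v_{-\frac{\ln(c\theta)}{b}}(\lambda)}\frac{\Phi(u)}{\Psi(u)}\ddr u)$, which vanishes precisely because $\int_0\frac{\Phi(u)}{\Psi(u)}\ddr u=-\infty$ — contradicting the assumed positive limit. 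If it converges to $0$, then $\eta(t_n)Y_{t_n} = o(v_{-t_n}(\lambda)Y_{t_n})$ but that alone is not decisive; instead I use that $\eta(t_n) \ll v_{-t_n}(\lambda)$ implies, via monotonicity of Laplace transforms and Theorem~\ref{theorem1}~i) applied to the \emph{convergent} integral truncation — no, rather: directly, $\phi_{t_n}(\theta)\ge \mathbb{E}_x[e^{-\theta v_{-t_n}(\lambda)Y_{t_n}}]\to \mathbb{E}_x[e^{-\theta W^\lambda}]$ where $W^\lambda=\infty$ a.s.\ by Theorem~\ref{theorem1}~ii), giving $\phi_{t_n}(\theta)\to$ something $\ge 0$ but we need the reverse; so instead observe $\eta(t_n)/v_{-t_n}(\lambda)\to 0$ forces $v_{t_n}(\eta(t_n))\to 0$ and $r_{t_n}(\eta(t_n))\to 0$ (the latter because $r_{t_n}(\eta(t_n))\le r_{t_n}(v_{-t_n}(\lambda))=\int_{v_{t_n}(v_{-t_n}(\lambda))}^{v_{-t_n}(\lambda)}\frac{\Phi}{\Psi}$, hmm need care with sign) — leading to $\phi_{t_n}(\theta)\to 1$, i.e.\ $L=0$ on that subsequence, again contradicting nondegeneracy. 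Finally, if it converges to $\infty$, then $\phi_{t_n}(\theta)\to 0$ by the same computation as the $c\in(0,\infty)$ case taking $c\to\infty$ and using $v_{-\frac{\ln\theta}{b}}(\lambda)\to\rho$, $\int_0^\rho\frac{\Phi}{\Psi}=-\infty$, giving $L=\infty$ on that subsequence. Since every subsequence yields $L\in\{0,\infty\}$ and $L$ is a genuine a.s.\ limit (hence deterministic by the tail argument, or at least its law is pinned), we conclude $L=0$ a.s.\ or $L=\infty$ a.s.

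The main obstacle I anticipate is the bookkeeping around the case $\eta(t_n)/v_{-t_n}(\lambda)\to 0$: one must show this really forces $r_{t_n}(\eta(t_n))\to 0$ and $v_{t_n}(\eta(t_n))\to 0$ without circularity, carefully using that $v_{-t}(\lambda)\to 0$, the monotonicity of $v_{-t}$, the semigroup identity $v_{-(t+u)}=v_{-u}\circ v_{-t}$, and the fact that $\eta(t_n)$ eventually lies below $v_{-t_n}(\lambda)$ so can be written as $v_{-t_n}(\mu_n)$ for suitable $\mu_n\le\lambda$ with $\mu_n\to 0$ — at which point $r_{t_n}(\eta(t_n))=r_{t_n}(v_{-t_n}(\mu_n))$ can be compared cleanly. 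A secondary point requiring care is justifying that the a.s.\ limit $L$ is deterministic (or that its distribution function is degenerate on $\{0,\infty\}$): this follows because $\eta(t)Y_t$ converges in law to $L$, so for each $z$ the limit $\mathbb{P}_x(L\le z)=\lim_t\mathbb{P}_x(\eta(t)Y_t\le z)$ exists and, by the subsequence analysis above, equals either $0$ or $1$, hence $L\in\{0,\infty\}$ a.s.
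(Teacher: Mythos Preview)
Your approach has a genuine gap in the case $\eta(t)/v_{-t}(\lambda)\to 0$, and the Laplace-transform analysis alone cannot close it. Concretely, take $\Psi(q)=-q$ (so $b=-1$, $v_{-t}(\lambda)=\lambda e^{-t}$) and $\Phi(q)\sim -1/\ln q$ as $q\to 0$; then $\int_0\frac{\Phi(u)}{|\Psi(u)|}\ddr u=\int_0\frac{\ddr u}{-u\ln u}=\infty$. With $\eta(t)=\lambda e^{-2t}=v_{-2t}(\lambda)$ one has $\eta(t)/v_{-t}(\lambda)\to 0$, yet
\[
r_t(\theta\eta(t))=\int_{\theta\lambda e^{-2t}}^{\theta\lambda e^{-t}}\frac{\Phi(u)}{u}\ddr u
\sim \int_{t}^{2t}\frac{\ddr v}{v}\longrightarrow \ln 2,
\]
so $\mathbb{E}_x[e^{-\theta\eta(t)Y_t}]\to e^{-\ln 2}=1/2$ for every $\theta>0$. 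Thus $\eta(t)Y_t$ converges \emph{in law} to $L$ with $\mathbb{P}(L=0)=\mathbb{P}(L=\infty)=1/2$, directly refuting your claim that in the $c=0$ case ``$\phi_{t_n}(\theta)\to 1$'' and your final assertion that ``$\mathbb{P}_x(L\le z)\in\{0,1\}$ by the subsequence analysis''. Your write-up $\eta(t_n)=v_{-t_n}(\mu_n)$ with $\mu_n\to 0$ is correct, but the conclusion $r_{t_n}(\eta(t_n))\to 0$ does not follow, because $r_{t_n}(\eta(t_n))=\int_{\eta(t_n)}^{\mu_n}\frac{\Phi}{|\Psi|}$ is an integral over a shrinking interval near a non-integrable singularity, and its value can be any number in $[0,\infty]$.

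The deeper issue is that your argument uses only convergence in law, which is strictly weaker than the hypothesis of a.s.\ convergence; the example above shows that a $\{0,\infty\}$-mixture limit is compatible with everything your Laplace-transform computations can see. To rule out the mixture one needs a genuine $0$--$1$ law, and the tail $\sigma$-field of a single Markov process is not trivial in general, so your passing remark about ``tail $\sigma$-field'' does not suffice. The paper's proof supplies exactly this missing ingredient: it realises $(Y_t)$ via the Dawson--Li flow of SDEs, decomposes $Y^{(n)}_\ell(0)$ as a sum over an \emph{independent} sequence of CB$(\Psi)$-processes, and then applies Kolmogorov's $0$--$1$ law to force the a.s.\ limit $V_0$ to be constant. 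The finite-positive-constant case is then excluded by a short regular-variation comparison of $\eta(t)$ with $v_{-t}(\lambda)$.
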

\begin{remark} 
Theorem \ref{noaslimit} breaks the hope of finding any law of large numbers and can be seen as a starting point for a study through non-linear renormalisations, see the forthcoming Section \ref{limitinlaw}.
\end{remark}
\begin{proof} We shall use the framework of flow of SDEs as Dawson and Li \cite{DawsonLi}.
We recall that by replacing in the SDE \eqref{sdeB}, the Brownian motion $(B_t,t\geq 0)$ by a white noise $M(\ddr s,\ddr u)$, see \cite{DawsonLi} and \cite{ChunhuaLi} for details, one can consider on a same probability space, the SDEs  
\begin{equation}\label{flow}\begin{split} Y^{(n)}_t(x)= x&+\sigma\int_n^{n+t} \int_0^{Y^{(n)}_{s}(x)} \, M(\d s,\d u)+\int_n^{n+t}\int_{0}^\infty\int_0^{Y^{(n)}_{s-}(x)} z\,\tilde{N}_0(\d
s, \d z, \d u) \\
&+\int_n^{n+t}(\beta-bY^{(n)}_s(x))\d s+\int_n^{n+t}\int_{0}^\infty zN_1(\d
s, \d z).
\end{split}\end{equation}
Those SDEs are known to have pathwise unique solutions. More precisely this provides a sequence of flows of CBI$(\Psi,\Phi)$ processes $\{Y^{(n)}_t(x), x\geq0, t\geq0, n\geq 0\}$ such that for any $n\in \mathbb{N}$ and any $x\geq y\geq 0$, $(Y^{(n)}_t(x)-Y^{(n)}_t(y),t\geq 0)$ is a CB process started from $x-y$ with branching mechanism $\Psi$ and is independent of $\{Y_t^{(n)}(y),t\geq0\}$. We denote by $(Y_t(x),t\geq 0)$ the process $(Y_t^{(0)}(x),t\geq 0)$, the solution to \eqref{flow} for $n=0$. Pathwise uniqueness entails that $Y_t^{(n)}(Y_n(x))=Y_{n+t}(x)$ for any $x\geq 0$, $t\geq 0$ and $n\in \mathbb{N}$ almost-surely. Let $X_t^{(n)}(x)=Y_t^{(n)}(x)-Y_t^{(n)}(0)$. Then 
\beqnn
Y_{n+t}(0)=X^{(n)}_{t}(Y_n(0))+Y^{(n)}_{t}(0).
\eeqnn
By Theorem \ref{theorem1}-(ii), $v_{-t}(\lambda)Y^{(n)}_t(0)\underset{t\rightarrow \infty}{\longrightarrow} \infty$ a.s and applying Theorem \ref{theorem1}-(i) to the CB$(\Psi)$ process $(X^{(n)}_{t}(Y_n(0)),t\geq 0)$, see Remark \ref{rem1}-i), we get $v_{-t}(\lambda)X^{(n)}_{t}(Y_n(0))\underset{t\rightarrow \infty}{\longrightarrow} W^{\lambda}$ for some finite random variable $W^{\lambda}$. Hence  
\beqlb\label{ratio}
\frac{Y_{n+t}(0)}{Y^{(n)}_t(0)}=1+\frac{X^{(n)}_{t}(Y_n(0))}{Y^{(n)}_t(0)}\underset{t\rightarrow \infty}{\longrightarrow} 1\quad\mbox{a.s..}
\eeqlb
Assume that there exists some $\eta(t)>0$ such that $\eta(t)Y_t(0)\underset{t\rightarrow \infty}{\longrightarrow} V_0$ a.s.. Then by (\ref{ratio}),  $\eta(n+t)Y^{(n)}_t(0)\underset{t\rightarrow \infty}{\longrightarrow} V_0$ a.s., and for $\ell\in\mathbb{N}$,  $\eta(n+l)Y^{(n)}_{\ell}(0)\underset{\ell\rightarrow \infty}{\longrightarrow} V_0$ a.s.. However by iteration,  it is not hard to see that for $\ell\geq1$, 
\beqnn
Y^{(n)}_{\ell}(0)&=& X_{\ell-1}^{(n+1)}(Y_1^{(n)}(0))+Y_{\ell-1}^{(n+1)}(0)\\
&=&\sum_{k=1}^{\ell} X_{\ell-k}^{(n+k)}(Y_1^{(n+k-1)}(0)),\quad \big(Y_0^{(n+\ell)}(0)=0\big)
\eeqnn
where $\{X_{\cdot}^{(k)}(Y_1^{(k-1)}(0))\}_{k=1}^\infty$  is a sequence of independent CB($\Psi$) processes. 

By the above iteration, for fixed $n$,  $\{Y^{(n)}_{\ell}(0),\ell\geq1\}$ is  measurable with respect to the $\sigma$-algebra $\mathcal{G}_n$ generated by  the sequence of independent processes $\{X_t^{(k)}(Y_1^{(k-1)}(0)): t\geq0\}_{k=n}^\infty$.  Since for any $n$, $\eta(n+l)Y^{(n)}_{\ell}(0)\underset{\ell\rightarrow \infty}{\longrightarrow} V_0$ a.s., we immediately have that $V_0$ is  measurable with respect to the tail $\sigma$-algebra generated by  the same sequence of independent processes $\{X_t^{(k)}(Y_1^{(k-1)}(0)): t\geq0\}_{k=1}^\infty$, i.e. $\cap_{n=1}^\infty \mathcal{G}_n$.


Kolmogorov's zero-one law, see e.g. \cite[Theorem 2.5.1]{Durrett}, asserts that  $V_0$ is a constant or infinite a.s..   Assume that $V_0$ is a finite positive constant a.s.. Since $Y^{(n)}_{\cdot}(0)$ has the same distribution as $Y_{\cdot}(0)$, 
we immediately have that $\eta(t)Y^{(n)}_t(0)\underset{t\rightarrow \infty}{\longrightarrow} V_0$ a.s.. Then $\eta(n+t)/\eta(t)\underset{t\rightarrow \infty}{\longrightarrow}1$, which implies that $\eta(t)\sim L(e^t)$ for some slowly varying function $L$ at $\infty$.  However recalling that  $v_{-\ln t}$ is a regularly varying function with index $b$, see Lemma \ref{eta}, we have that $v_{-t}(\lambda)\sim e^{bt}L^*(e^{t})$ as $t$ goes to $\infty$ where $L^*(\cdot)$ is a slowly varying function at $\infty$ and thus $v_{-t}(\lambda)/\eta(t)\rightarrow 0$. 
This leads to a contradiction. Thus
$V_0$ is $0$ a.s. or $\infty$ a.s..
\qed
\end{proof}
\subsection{A general limit in law for non-critical CBIs when $\int_0\frac{\Phi(u)}{|\Psi(u)|}\ddr u=\infty$}\label{limitinlaw}
This section focuses on the study of the long-term behavior of CBI$(\Psi,\Phi)$ processes satisfying the condition
\begin{equation}\label{cinfty}\int_{0}\frac{\Phi(u)}{|\Psi(u)|}\ddr u=\infty. \end{equation}
By Theorem \ref{PL}, in this case $Y_t$ converges in law to $\infty$ as $t$ goes to $\infty$. We give a finer description of the behavior of $Y_t$ through distributional, rather than almost sure, limit theorems. In this section, we prove the main convergence theorem below which provides a non-linear time-dependent renormalization in law of any non-critical CBI process. The three different regimes of convergence in law mentioned in the Introduction are designed in the forthcoming Section \ref{threeregimes}. 

Recall the definition of $r_t(\lambda)$ in \eqref{J_lambda t} and that $r_t(\infty)=\infty$ if and only if $\mathbb{P}_x(Y_t=0)=0$ for all $t>0$. In the next theorem, we take the convention $1/0=\infty$.
\begin{theorem}\label{no1}
Assume {(\ref{cinfty}) holds}, and
$\Psi$ is non-critical ($b\neq 0$). Then, for all $x\geq 0$, we have
\begin{equation}\label{newe}
r_t(1/Y_t):=\int_{v_{t}(1/Y_t)}^{1/Y_t} \frac{\Phi(u)}{\Psi(u)}\ddr u\stackrel{d}\longrightarrow \mathrm{e}_1, \text{ as } t\rightarrow \infty
 \text{ under } \mathbb{P}_{x}
\end{equation}
where $\mathrm{e}_1$ is an exponential random variable with parameter $1$.
\end{theorem}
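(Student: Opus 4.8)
The natural approach is to compute directly the law of the random variable $r_t(1/Y_t)$ and show it converges to that of $\mathrm{e}_1$, i.e. that for every $z>0$,
$$\mathbb{P}_x\big(r_t(1/Y_t)>z\big)\underset{t\to\infty}{\longrightarrow}\mathrm{e}^{-z}.$$
Here is the plan. First I would analyze the map $\lambda\mapsto r_t(\lambda)$. Since $\lambda\mapsto v_t(\lambda)$ is strictly increasing (Lemma \ref{eta}) and $\Phi/\Psi$ has a fixed sign on $(0,\rho)$ and on $(\rho,\infty)$, the map $\lambda\mapsto r_t(\lambda)=\int_{v_t(\lambda)}^\lambda \frac{\Phi(u)}{\Psi(u)}\,\ddr u$ is continuous and strictly monotone in $\lambda$ on each relevant interval, hence invertible; moreover $r_t(0^+)=0$ and $r_t(\infty)=r_t(\infty)$ which equals $+\infty$ under \eqref{cinfty} (this is exactly where \eqref{cinfty} enters: $\int_0 \Phi(u)/|\Psi(u)|\,\ddr u=\infty$ forces, via $v_t(\lambda)\to 0$ as $\lambda\to0$ in the subcritical case or via the behaviour near $\rho$ in the supercritical case, that $r_t$ sweeps out all of $(0,\infty)$ — and in the supercritical case one also needs $r_t(\infty)=\int_{\bar v_t}^\infty \Phi/\Psi=\infty$ or a separate argument treating the $\{Y_t=0\}$ atom). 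So for fixed $t$ and $z>0$ there is a unique $\lambda_t(z)\in(0,\infty]$ with $r_t(\lambda_t(z))=z$, and by monotonicity
$$\{r_t(1/Y_t)>z\}=\{1/Y_t>\lambda_t(z)\}=\{Y_t<1/\lambda_t(z)\}$$
(with the appropriate orientation depending on $b\lessgtr 0$; care is needed with the atom at $Y_t=0$, which is where the convention $1/0=\infty$ and $r_t(\infty)$ come in).

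The second step is to pass to the Laplace transform. Rather than controlling $\mathbb{P}_x(Y_t<1/\lambda_t(z))$ directly, I would use \eqref{cumulantCBI2}: for the fixed threshold $\lambda=\lambda_t(z)$ one has $\mathbb{E}_x[\mathrm{e}^{-\lambda_t(z) Y_t}]=\exp(-x v_t(\lambda_t(z)) - z)$, since by construction $r_t(\lambda_t(z))=z$. The plan is then to show that $v_t(\lambda_t(z))\to 0$ as $t\to\infty$: this should follow because $\lambda_t(z)\to 0$ (in the subcritical case) or $\lambda_t(z)\to\rho$ (in the supercritical case) — indeed, as $t\to\infty$ the integral $\int_{v_t(\lambda)}^\lambda \Phi/|\Psi|$ over a fixed $\lambda$ blows up (the lower endpoint runs to $0$ or to $\rho$ and the integral diverges by \eqref{cinfty}), so to keep $r_t(\lambda_t(z))$ pinned at the constant $z$ one must send $\lambda_t(z)$ toward the singular endpoint; and $v_t$ evaluated there tends to $0$ by Lemma \ref{eta} (in the supercritical case $v_t(\rho)=\rho$, so here one needs the finer statement that $v_t(\lambda_t(z))\to 0$ because $\lambda_t(z)$ approaches $\rho$ but $v_t$ of things slightly below $\rho$ still decays — this is the delicate point). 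Granting this, $\mathbb{E}_x[\mathrm{e}^{-\lambda_t(z)Y_t}]\to \mathrm{e}^{-z}$. To convert this Laplace-transform limit into the distributional statement $\mathbb{P}_x(Y_t<1/\lambda_t(z))\to 1-\mathrm{e}^{-z}$, I would combine it with Theorem \ref{PL} (which says $\tau(t)Y_t\to\infty$ in probability) so that on the relevant scale $\lambda_t(z)Y_t$ has no mass escaping to a finite nonzero value: writing $\mathbb{E}_x[\mathrm{e}^{-\lambda_t(z)Y_t}] = \mathbb{P}_x(\lambda_t(z)Y_t \approx 0) + o(1)$ in the sense that the product $\lambda_t(z)Y_t$ converges in law to a $\{0,\infty\}$-valued variable with $\mathbb{P}(\text{"}=0\text{"})=\mathrm{e}^{-z}$; equivalently, $\lambda_t(z)/\tau(t)^{-1}\to$ some limit dictating the split. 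The cleanest route is: since $\mathrm{e}^{-\lambda_t(z)Y_t}\in[0,1]$ and its expectation tends to $\mathrm{e}^{-z}$, while $\lambda_t(z)Y_t$ converges in law (along subsequences) to a variable supported on $\{0,\infty\}$ by the argument behind Theorem \ref{noaslimit}/Corollary \ref{cor}, dominated convergence forces $\mathbb{P}(\text{limit}=0)=\mathrm{e}^{-z}$, which is precisely $\mathbb{P}_x(r_t(1/Y_t)>z)\to\mathrm{e}^{-z}$.

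The main obstacle I anticipate is the supercritical case $b<0$: there $\rho>0$, the relevant singularity of $\Phi/\Psi$ is at $\rho$ rather than at $0$, and one must carefully track that $\lambda_t(z)\uparrow\rho$ at exactly the right rate so that $v_t(\lambda_t(z))\downarrow 0$ — this requires using \eqref{cumulantintegral}/\eqref{equationeta} quantitatively near $\rho$ and the fact that $\int^\rho \ddr z/\Psi(z)=-\infty$, together with a careful treatment of the possible atom $\mathbb{P}_x(Y_t=0)>0$ (which occurs iff $r_t(\infty)<\infty$, contradicting the picture unless \eqref{cinfty} rules it out — this needs to be checked). The subcritical/ critical-excluded case $b>0$ is analogous but easier since $\rho=0$ and everything happens at the origin where $|\Psi(u)|\sim |b|u$. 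I would also need to make the inversion $\lambda\mapsto r_t(\lambda)$ rigorous uniformly enough to justify the manipulations, but that is routine monotonicity bookkeeping given \eqref{J_lambda t} and Lemma \ref{eta}.
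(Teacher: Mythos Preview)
Your overall framework --- invert $r_t$ to get $c_t(z):=\lambda_t(z)$, rewrite $\{r_t(1/Y_t)>z\}=\{c_t(z)Y_t<1\}$, and attack via the Laplace transform \eqref{cumulantCBI2} --- is exactly the skeleton of the paper's proof. But there is a genuine gap at the conversion step, and a misconception about the supercritical case.

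\textbf{The gap.} From $\mathbb{E}_x[e^{-c_t(z)Y_t}]=\exp(-xv_t(c_t(z))-z)\to e^{-z}$ you cannot conclude $\mathbb{P}_x(c_t(z)Y_t<1)\to e^{-z}$: the Laplace transform at a \emph{single} argument does not pin down the law, and you have no independent reason why subsequential limits of $c_t(z)Y_t$ should be $\{0,\infty\}$-valued. Your proposed patch via ``the argument behind Corollary \ref{cor}'' is circular, since that corollary is \emph{deduced from} Theorem \ref{no1} in the paper. The paper closes the gap by proving the stronger statement
\[
\mathbb{E}_x\big[e^{-\theta c_t(z)Y_t}\big]\longrightarrow e^{-z}\quad\text{for \emph{every} }\theta>0,
\]
which forces the limit law of $c_t(z)Y_t$ to be supported on $\{0,\infty\}$ with mass $e^{-z}$ at $0$. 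The work is in the decomposition
\[
r_t(\theta c_t(z))=\int_{c_t(z)}^{\theta c_t(z)}\frac{\Phi(u)}{\Psi(u)}\ddr u+\underbrace{r_t(c_t(z))}_{=z}+\int_{v_t(\theta c_t(z))}^{v_t(c_t(z))}\frac{\Phi(u)}{\Psi(u)}\ddr u,
\]
and one must show both boundary integrals vanish and that $v_t(\theta c_t(z))\to 0$. This is precisely where non-criticality enters: $|\Psi(u)|\geq h u$ near $0$ gives $\bigl|\int_{y}^{\theta y}\ddr u/\Psi(u)\bigr|\leq |\ln\theta|/h$, so the first integral is at most $\Phi\big(\max\{c_t(z),\theta c_t(z)\}\big)\,|\ln\theta|/h\to 0$; the second is handled by the same bound after a telescoping identity for $\int_{v_t(c_t)}^{v_t(\theta c_t)}\ddr u/\Psi(u)$. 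Your sketch never addresses this $\theta$-uniformity, and without it the distributional conclusion does not follow.

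\textbf{The supercritical misconception.} You expect $\lambda_t(z)\to\rho$ when $b<0$; in fact $c_t(z)\to 0$ in \emph{both} non-critical cases. Indeed $r_t(\epsilon)\to\infty$ for every fixed $\epsilon>0$ (in the supercritical case because $v_s(\epsilon)\to\rho$ and $\Phi(v_s(\epsilon))\to\Phi(\rho)>0$, so $r_t(\epsilon)=\int_0^t\Phi(v_s(\epsilon))\ddr s\to\infty$), and since $r_t$ is increasing with $r_t(0)=0$ this forces $c_t(z)<\epsilon$ eventually. The role of \eqref{cinfty} is not to make $r_t$ surjective onto $(0,\infty)$ --- it is to force $v_t(c_t(z))\to 0$: if along a subsequence $v_t(c_t(z))\to\ell>0$ while $c_t(z)\to 0$, then $z=r_t(c_t(z))=\int_{c_t(z)}^{v_t(c_t(z))}\Phi(u)/|\Psi(u)|\,\ddr u\to\int_0^{\ell}\Phi/|\Psi|=\infty$, a contradiction. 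Your worry about the atom at $Y_t=0$ and about $r_t(\infty)$ is a side issue (handled by the convention $1/0=\infty$); condition \eqref{cinfty} concerns the origin and has no bearing on $r_t(\infty)$.
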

\begin{remark} The law of the limiting distribution in \eqref{newe} does not depend on the initial state $x$ of the CBI process. It justifies the perception that in the regime \eqref{cinfty}, the dynamics is governed on the long-term by the immigration part and not by the branching part. 
\end{remark}
\begin{proof}
Recall  the equations \eqref{cumulantCBI} and \eqref{cumulantCBI2}. 


Step 1: We claim that for fixed $\lambda>0$, $r_t(\lambda)\rightarrow\infty$ as $t\rightarrow\infty$. In fact,
in the subcritical case, $\Psi(u)>0$ for $u>0$. By (\ref{cumulantintegral}), $v_t(\lambda)\downarrow 0$ as $t\to\infty$, for any fixed $\lambda>0$. According to (\ref{cinfty}) and the second equality of \eqref{J_lambda t}, we have that $r_{t}(\lambda)\rightarrow\infty$ as $t\to\infty.$ 

In the supercritical case,
 still by (\ref{cumulantintegral}), $v_t(\lambda)\rightarrow \rho$ as $t\to\infty$ for fixed $\lambda>0$. Then $\Phi(v_t(\lambda))\rightarrow\Phi(\rho)>0$,
 as $t\rightarrow\infty$.
Together with the first equality of \eqref{J_lambda t}, we obtain that $r_t(\lambda)\rightarrow\infty$ as $t\rightarrow\infty$.

Step 2: Recall that $\lambda\mapsto r_{t}(\lambda)$ is the Laplace exponent of a subordinator with no killing term. For all $t$, $r_t(0)=0$, and $r_t$ is strictly decreasing  in $\lambda$. 
So we can define $\lambda\mapsto c_{t}(\lambda)$ as the inverse of $\lambda \mapsto r_{t}(\lambda)$. 
Fix $\lambda>0$. By Step 1, for any small $\varepsilon>0$, we can find sufficiently large $t$ such that $r_t(\varepsilon)>\lambda=r_{t}(c_{t}(\lambda))$, which implies that
$c_t(\lambda)<\varepsilon$.
Thus
\begin{equation}\label{00}c_t(\lambda)\underset{t\rightarrow\infty}{\rightarrow} 0,\text{ and } v_t(c_t(\lambda))\underset{t\rightarrow\infty}{\rightarrow} 0.\end{equation} 
The second limit follows from (\ref{cinfty}) and the second equality of (\ref{J_lambda t}) (replacing $\lambda$ by $c_t(\lambda)$).


Step 3:
Notice that we can equivalently show that for any $\lambda\geq0$ and $\theta>0$,
\beqlb\label{limit Y_t}
\lim_{t\to\infty}\E_x[e^{-\theta c_t(\lambda)Y_t}]=e^{-\lambda}.
\eeqlb
In fact, if (\ref{limit Y_t}) holds, then $c_t(\lambda)Y_t$ converges in distribution to a random variable $Z$ such that $\P(Z=\infty)=1-\P(Z=0)=1-e^{-\lambda}.$
Therefore, for $\lambda>0$,
\begin{equation}\label{key}\mathbb{P}(r_{t}(1/Y_t)>\lambda)=\mathbb{P}(1/Y_{t}> c_{t}(\lambda))=\mathbb{P}(c_{t}(\lambda)Y_t<1)\underset{t\rightarrow \infty}{\longrightarrow} e^{-\lambda},
\end{equation}
which implies \eqref{newe}.

Step 4: Note that
\begin{equation}\label{decompose}\E_x[e^{-\theta c_t(\lambda)Y_t}]=\exp{\left(-xv_{t}(\theta c_t(\lambda))-r_t(\theta c_t(\lambda))\right)}\end{equation}
and
\begin{align}\label{rthetact}
r_t(\theta c_t(\lambda))&=\int_{v_t(\theta c_t(\lambda))}^{\theta c_t(\lambda)}\frac{\Phi(u)}{\Psi(u)}\ddr u&\\
&={\int_{c_t(\lambda)}^{\theta c_t(\lambda)}\frac{\Phi(u)}{\Psi(u)}\ddr u}+\underbrace{\int_{v_{t}(c_t(\lambda))}^{c_t(\lambda)}\frac{\Phi(u)}{\Psi(u)}\ddr u}_{=r_{t}(c_t(\lambda))=\lambda}+{\int_{v_{t}(\theta c_t(\lambda))}^{v_{t}(c_t(\lambda))}\frac{\Phi(u)}{\Psi(u)}\ddr u.}&\nonumber
\end{align}
So to obtain (\ref{limit Y_t}), it suffices to prove that as $t\rightarrow \infty$, 
\begin{equation}\label{vthetac}v_t(\theta c_t(\lambda))\rightarrow0\end{equation}
and 
\beqlb \label{limit super 2}
\int_{c_t(\lambda)}^{\theta {c_t(\lambda)}}\frac{\Phi(u)}{\Psi(u)}\ddr u\rightarrow0,\quad  \int_{v_t(c_t(\lambda))}^{v_t(\theta c_t(\lambda))}\frac{\Phi(u)}{\Psi(u)}\ddr u\rightarrow0.\eeqlb
 By the monotonicity of $\Phi$, we have for small enough $y$,
\beqlb\label{3.22}
\left|\int_{\theta y}^{y}\frac{\Phi(u)}{\Psi(u)}\ddr u\right|\leq \max(\Phi(y),\Phi(\theta y))\left|\int_{\theta y}^{y}\frac{1}{\Psi(u)}\ddr u\right|.
\eeqlb
On the one hand $ \max(\Phi({y}),\Phi(\theta y))\underset{y\rightarrow 0}\longrightarrow 0$ as $\Phi$ is continuous and $\Phi(0)=0$. On the other hand, since $\Psi$ is non-critical, there exists some constant $h>0$ such that $|\Psi(u)|\geq hu$ for $u$ close enough to $0$. This entails that
 \beqlb\label{equa non critical}
 \bigg|\int_{\theta y}^{y}\frac{1}{\Psi(u)}\ddr u\bigg|\leq \frac{|\ln\theta|}{h},
 \eeqlb
when $y$ is small enough. Then by (\ref{3.22}) and (\ref{00}),
\beqlb
\label{limit theta integral}
\bigg|\int_{c_t(\lambda)}^{\theta {c_t(\lambda)}}\frac{\Phi(u)}{\Psi(u)}\ddr u\bigg|\leq \max(\Phi(c_t(\lambda)),\Phi(\theta c_t(\lambda)))\frac{|\ln\theta|}{h}
\underset{t\rightarrow \infty}{\longrightarrow} 0.
\eeqlb
So the first convergence in \eqref{limit super 2} is proved. 

 Note that 
\beqnn
\int_{v_t(c_t(\lambda))}^{v_t(\theta c_t(\lambda))}\frac{\ddr u}{\Psi(u)}\ar=\ar\int_{v_t(c_t(\lambda))}^{c_t(\lambda)}\frac{\ddr u}{\Psi(u)}+
\int_{c_t(\lambda)}^{\theta c_t(\lambda)}\frac{\ddr u}{\Psi(u)}+\int_{\theta c_t(\lambda)}^{   v_t(\theta c_t(\lambda))}\frac{\ddr u}{\Psi(u)}\nonumber\\
\ar=\ar t+\int_{c_t(\lambda)}^{\theta c_t(\lambda)}\frac{\ddr u}{\Psi(u)}-t\nonumber\\
\ar=\ar \int_{c_t(\lambda)}^{\theta c_t(\lambda)}\frac{\ddr u}{\Psi(u)},
\eeqnn
which entails that $v_t(\theta c_t(\lambda))\to 0$ as $t\to\infty$, since $v_t(c_t(r))\to 0$ by (\ref{00}) and $\big|\int_{c_t(\lambda)}^{\theta c_t(\lambda)}\frac{\ddr u}{\Psi(u)}\big|\leq\ln\theta/h$ by (\ref{equa non critical}). So the convergence in \eqref{vthetac} is proved. 
Then
\beqnn
\bigg|\int_{v_t(c_t(\lambda))}^{v_t(\theta c_t(\lambda))}\frac{\Phi(u)}{\Psi(u)}\ddr u\bigg|\ar\leq\ar
\max\{\Phi(v_t(c_t(\lambda))),\Phi(v_t(\theta c_t(\lambda)))\}\left|\int_{v_t(c_t(\lambda))}^{v_t(\theta c_t(\lambda))}\frac{1}{\Psi(u)}\ddr u\right|\nonumber\\
\ar=\ar
\max\{\Phi(v_t(c_t(\lambda))),\Phi(v_t(\theta c_t(\lambda)))\}\left|\int_{c_t(\lambda)}^{\theta c_t(\lambda)}\frac{1}{\Psi(u)}\ddr u\right|,
\eeqnn
which goes to $0$ similarly as in (\ref{limit theta integral}). Then the second convergence in \eqref{limit super 2} is proved. So both conditions \eqref{vthetac} and \eqref{limit super 2} hold true, and we can conclude that (\ref{newe}) holds true. \qed
\end{proof}
\begin{remark}\label{rem3}
Note that the steps 1 and 2 also work for the critical case. But step 4 requires $b\neq 0.$ However the same line of arguments as in this proof will be used in Section \ref{criticalsec} where we focus on the study of the critical case.
\end{remark}
We now provide a corollary leading to a more intuitive probabilistic understanding of Theorem \ref{no1}. In particular it will shed new light on Theorem \ref{noaslimit}. We take the convention $0/0=0\times \infty=0$.
\begin{corollary}\label{cor} Assume \eqref{cinfty} and that $\Psi$ is non-critical. Let $(Y_t,t\geq 0)$ and $(\tilde{Y}_t,t\geq 0)$ be two independent CBI$(\Psi,\Phi)$ processes started from $0$. 

Then 
\begin{equation}\label{tilde}
Y_t/\tilde{Y}_t\underset{t\rightarrow \infty}{\longrightarrow} \Lambda \text{ in law}
\end{equation}
where $\Lambda$ has law $\mathbb{P}(\Lambda=0)=1-\mathbb{P}(\Lambda=\infty)=1/2$.
Moreover, there is no deterministic renormalization $(\eta(t),t\geq 0)$ such that $(\eta(t)Y_t,t\geq 0)$ converges in law towards a non-degenerate random variable.
\end{corollary}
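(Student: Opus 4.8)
The plan is to deduce \eqref{tilde} from Theorem~\ref{no1}, and then the non-existence statement from \eqref{tilde}. Concerning \eqref{tilde}, the starting point is that $r_t(1/Y_t)\overset{d}{\longrightarrow}\mathrm{e}_1$ by Theorem~\ref{no1}; since $(Y_t)$ and $(\tilde Y_t)$ are independent, the pair $(r_t(1/Y_t),r_t(1/\tilde Y_t))$ then converges in law to $(\mathrm{e},\mathrm{e}')$, a pair of independent standard exponential variables. Moreover $\lambda\mapsto r_t(\lambda)$ is strictly increasing (it is the Laplace exponent of a subordinator), so on the event $\{Y_t>0,\ \tilde Y_t>0\}$ — whose $\mathbb{P}$-probability tends to $1$ by Theorem~\ref{PL} — the event $\{Y_t>a\tilde Y_t\}$ equals $\{r_t(1/Y_t)<r_t(1/(a\tilde Y_t))\}$ when $a\geq 1$, and $\{r_t(a/Y_t)<r_t(1/\tilde Y_t)\}$ when $a\leq 1$.

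The crux is the following claim: for any fixed $\theta\in(0,1]$ and any CBI$(\Psi,\Phi)$ process $(W_t,t\geq0)$ one has $r_t(\theta/W_t)-r_t(1/W_t)\overset{p}{\longrightarrow}0$. I would prove it by reproducing the computation of Step~4 of the proof of Theorem~\ref{no1}: using \eqref{J_lambda t} and a splitting of integrals,
\[
r_t(1/W_t)-r_t(\theta/W_t)=\int_{v_t(1/W_t)}^{v_t(\theta/W_t)}\frac{\Phi(u)}{\Psi(u)}\,\ddr u+\int_{\theta/W_t}^{1/W_t}\frac{\Phi(u)}{\Psi(u)}\,\ddr u .
\]
Since $\Psi$ is non-critical there are $h,u_0>0$ with $|\Psi(u)|\geq hu$ on $(0,u_0)$, and using $1/W_t\overset{p}{\longrightarrow}0$ (Theorem~\ref{PL}), the continuity of $\Phi$ at $0$ with $\Phi(0)=0$, and the identity $\int_{v_t(a)}^{v_t(b)}\frac{\ddr u}{\Psi(u)}=\int_a^b\frac{\ddr u}{\Psi(u)}$ issued from \eqref{cumulantintegral}, the two integrals are bounded in absolute value by $\Phi\big(v_t(1/W_t)\big)\,|\ln\theta|/h$ and $\Phi(1/W_t)\,|\ln\theta|/h$ respectively, both of which tend to $0$ in probability. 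The only non-immediate input is $v_t(1/W_t)\overset{p}{\longrightarrow}0$: in the subcritical case this is clear since $v_t(1/W_t)\leq 1/W_t$, while in the supercritical case it follows from Theorem~\ref{theorem1}-(ii), because for $\eta\in(0,\rho)$ we have $\{v_t(1/W_t)>\eta\}=\{v_{-t}(\eta)W_t<1\}$ and $v_{-t}(\eta)W_t\to\infty$ almost surely.

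Granting the claim, fix $a\geq 1$ and set $\varepsilon_t:=r_t(1/\tilde Y_t)-r_t(1/(a\tilde Y_t))\overset{p}{\longrightarrow}0$, which is a function of $\tilde Y_t$ alone, hence independent of $r_t(1/Y_t)$; by Slutsky's lemma $\big(r_t(1/Y_t),\,r_t(1/\tilde Y_t)-\varepsilon_t\big)\overset{d}{\longrightarrow}(\mathrm{e},\mathrm{e}')$, so $\mathbb{P}(Y_t>a\tilde Y_t)\to\mathbb{P}(\mathrm{e}<\mathrm{e}')=1/2$; the case $a\leq1$ is treated symmetrically starting from $\{r_t(a/Y_t)<r_t(1/\tilde Y_t)\}$. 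Therefore $\mathbb{P}(Y_t/\tilde Y_t\leq a)\to 1/2$ for every $a\in(0,\infty)$, and since $(0,\infty)$ is exactly the set of continuity points of the distribution function of a variable $\Lambda$ with $\mathbb{P}(\Lambda=0)=\mathbb{P}(\Lambda=\infty)=1/2$, this establishes \eqref{tilde}.

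For the last assertion, suppose $\eta(t)>0$ satisfies $\eta(t)Y_t\overset{d}{\longrightarrow}V$ with the support of $V$ not contained in $\{0,\infty\}$; then it meets $(0,\infty)$, so one may choose continuity points $0<a<b<\infty$ of the law of $V$ with $\mathbb{P}(a<V<b)>0$. By independence, $\mathbb{P}\big(\eta(t)Y_t\in(a,b),\ \eta(t)\tilde Y_t\in(a,b)\big)\to\mathbb{P}(a<V<b)^2>0$, and on this event $Y_t/\tilde Y_t\in(a/b,\,b/a)\subset(0,\infty)$; hence $\liminf_t\mathbb{P}\big(Y_t/\tilde Y_t\in(a/b,b/a)\big)>0$, contradicting $\mathbb{P}\big(Y_t/\tilde Y_t\in(a/b,b/a)\big)\to\mathbb{P}\big(\Lambda\in(a/b,b/a)\big)=0$, which follows from \eqref{tilde} since $0<a/b<b/a<\infty$. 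The step I expect to be the main obstacle is the probabilistic estimate $v_t(1/W_t)\overset{p}{\longrightarrow}0$ in the supercritical case, resolved above via Theorem~\ref{theorem1}-(ii); the rest is bookkeeping of signed integrals and of the asymptotically negligible events $\{Y_t=0\}$, $\{\tilde Y_t=0\}$ and $\{1/W_t=\rho\}$.
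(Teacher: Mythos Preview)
Your proof is correct, and the derivation of the non-existence of a linear normalisation $\eta(t)$ is essentially identical to the paper's. For \eqref{tilde}, however, you take a genuinely different route. The paper exploits the closed form $\mathbb{E}_0[e^{-\lambda Y_t}]=e^{-r_t(\lambda)}$ (valid precisely because the starting point is $0$): conditioning on $\tilde Y_t$ gives
\[
\mathbb{E}\big[e^{-\theta Y_t/\tilde Y_t}\big]=\tilde{\mathbb{E}}\big[e^{-r_t(\theta/\tilde Y_t)}\big],
\]
and since the proof of Theorem~\ref{no1} already contains the statement $r_t(\theta/\tilde Y_t)\overset{d}{\longrightarrow}\mathrm{e}_1$ for every fixed $\theta>0$ (via $\mathbb{P}(c_t(\lambda)Y_t<\theta)\to e^{-\lambda}$), bounded convergence yields $\mathbb{E}[e^{-\theta Y_t/\tilde Y_t}]\to\mathbb{E}[e^{-\mathrm{e}_1}]=1/2$ in two lines. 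You instead work directly on the distribution function: monotonicity of $\lambda\mapsto r_t(\lambda)$ turns $\{Y_t>a\tilde Y_t\}$ into a comparison of $r_t(1/Y_t)$ and $r_t(1/(a\tilde Y_t))$, and a Slutsky argument resting on your claim $r_t(1/W_t)-r_t(\theta/W_t)\overset{p}{\longrightarrow}0$ (reproved from scratch by replaying Step~4 of Theorem~\ref{no1}) gives $\mathbb{P}(Y_t>a\tilde Y_t)\to\mathbb{P}(\mathrm{e}<\mathrm{e}')=1/2$. Your path is longer but has a small payoff: nothing in it uses $Y_0=\tilde Y_0=0$, so it covers arbitrary initial values directly, whereas the paper's Laplace-transform trick is specific to $x=0$ and a separate remark is needed afterwards to extend the statement.
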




\begin{proof} By a similar argument as in Equation \eqref{key}, since for any $\lambda>0$, $c_t(\lambda)Y_t\underset{t\rightarrow \infty}{\longrightarrow} Z$ in law with $Z$ such that $\mathbb{P}(Z=\infty)=1-\mathbb{P}(Z=0)=1-e^{-\lambda}$, one has for any $\theta>0$ and 
\[\mathbb{P}(r_t(\theta/Y_t)>\lambda)=\mathbb{P}(c_t(\lambda)Y_t<\theta)\underset{t\rightarrow \infty}{\longrightarrow} e^{-\lambda}=\mathbb{P}(\mathrm{e}_1>\lambda)\]
with $\mathrm{e}_1$ a standard exponential random variable. Hence for any $\theta\geq 0$ and for any $t\geq 0$, we apply \eqref{cumulantCBI2} to obtain \begin{align*}
\mathbb{E}\left[e^{-\theta\frac{{Y}_t}{\tilde Y_t}}\right]=\mathbb{\tilde E}_0\left[{\mathbb{E}}_0\left[e^{-\frac{\theta}{\tilde Y_t}{Y}_t}|\tilde Y_t\right]\right]&= \mathbb{\tilde E}\left[\mathbb{E}\left[e^{- r_t(\theta/\tilde Y_t)}|\tilde Y_t\right]\right]\\
&=\mathbb{\tilde E}\left[e^{-r_t(\theta /\tilde Y_t)}\right]\underset{t\rightarrow \infty}{\longrightarrow}\mathbb{E}\left[e^{-\mathrm{e}_1}\right]=\frac{1}{2}.
\end{align*} 

Therefore $Y_t/\tilde{Y}_t\underset{t\rightarrow \infty}{\longrightarrow} \Lambda$ in law with $\mathbb{P}(\Lambda=0)=\frac{1}{2}$ and $\mathbb{P}(\Lambda=\infty)=\frac{1}{2}$. 

We show now that there is no renormalization in law. By contradiction, assume that there exists $(\eta(t),t\geq 0)$ such that $\eta(t)Y_t\underset{t\rightarrow \infty}{\longrightarrow} V$ in law with $V$ a non-degenerate random variable. Let $b>a>0$ be any two values such that $\P(V\in[a,b])>0.$  Then, 
$$\liminf_{t\to\infty}\P\left(\frac{\eta(t){Y}_t}{\eta(t)\tilde{Y}_t}=\frac{{Y}_t}{\tilde{Y}_t}\in[a/b, b/a]\right)>0,$$ 
but this is in contradiction to \eqref{tilde}. 
Then necessarily $V$ is degenerate. \qed
\end{proof}

\begin{remark} The statement of Corollary \ref{cor} holds true for CBI processes started from arbitrary initial values. Indeed if $(Y_t(x),t\geq 0)$ and $(\tilde{Y}_t(y),t\geq 0)$ are two independent CBI$(\Psi,\Phi)$ started respectively at $x$ and $y$, then 
for any $t\geq 0$, $Y_t(x)=X_t(x)+Y_t(0)$ with $(X_t(x),t\geq 0)$ a CB$(\Psi)$ started from $x$ and independent of $(Y_t(0),t\geq 0)$. Similarly $\tilde{Y}_t(y)=\tilde{X}_t(y)+\tilde{Y}_t(0)$ with $(\tilde{X}_t(y),t\geq 0)$ a CB$(\Psi)$ started from $y$ and independent of $(\tilde Y_t(0),t\geq 0)$. One checks that 
\[\frac{Y_t(x)}{\tilde{Y}_t(y)}=\frac{Y_t(0)\left(1+X_t(x)/Y_t(0)\right)}{\tilde{Y}_t(0)\left(1+\tilde{X}_t(y)/\tilde{Y}_t(0)\right)}\overset{p}{\sim} \frac{Y_t(0)}{\tilde{Y}_t(0)} \text{ as } t\rightarrow \infty.\]
Indeed, on the one hand if $\Psi$ is (sub)critical then both $\tilde{X}_t(x)$ and $X_t(y)$ converge towards $0$ almost-surely, and  by Theorem \ref{PL}, $Y_t(0)$ and $\tilde{Y}_t(0)$ go towards $\infty$ in probability. On the other hand if $\Psi$ is supercritical then by Theorem \ref{theorem1}, for any $x\geq 0$,
$\frac{X_t(x)}{Y_t(0)}=\frac{v_{-t}(\lambda)X_t(x)}{v_{-t}(\lambda)Y_t(0)}\underset{t\rightarrow \infty}{\longrightarrow} 0 \text{ a.s.}$
\end{remark}

\subsection{Three different regimes}\label{threeregimes}
Since the renormalization in Theorem \ref{no1} is non-linear and time-dependent, it is rather intricate at a first sight to deduce from it which explicit growth rates are possible. We design here different regimes for which \eqref{cinfty} holds and the rate can be found explicitly. This establishes and completes \cite[Theorem 2]{Pinsky}.
\subsubsection{Definition of the regimes and preliminary calculations}
Recall $\lambda \mapsto c_t(\lambda)$ the inverse of $\lambda\mapsto r_t(\lambda)=\int_{v_t(\lambda)}^{\lambda}\frac{\Phi(u)}{\Psi(u)}\ddr u$. Theorem \ref{no1} indicates that $Y_t$ should grow at the speed of $1/c_t(\lambda)$ as $t\rightarrow \infty$. However the magnitude of $c_t(\lambda)$ is rather involved and deserves a careful analysis. We shall simplify \eqref{newe} to more straightforward forms by imposing some additional conditions. 

To start with, let us fix some constant $\lambda_0$ such that $\lambda_0\in(0,\infty)$ in the (sub)critical case and $\lambda_0\in(0,\rho)$ in the supercritical case. Put
\beqlb\label{defvarphi}
 \varphi(\lambda)=\int_\lambda^{\lambda_0} \frac{\ddr u}{|\Psi(u)|},\quad  0<\lambda<\lambda_0.
 \eeqlb
By assumption $|\Psi'(0+)|<\infty$ and thus $\varphi(\lambda)\rightarrow\infty$ as $\lambda\rightarrow0$.
The mapping $\varphi:(0,\lambda_0)\;\rightarrow\;(0,\infty)$ is { strictly decreasing}, and we write $g$ for its inverse mapping. It is easy to see that $g$ is a strictly decreasing continuous function on $(0,\infty)$, and 
\begin{equation}\label{gx}\lim_{x\to\infty}g(x)=0,\quad \lim_{x\to0}g(x)=\lambda_0.\end{equation} By  (\ref{cumulantintegral}), if $b\geq0$, then $\Psi\geq 0$ and
\begin{align}\label{phisub}
\varphi(v_t(\lambda))&=\int_{v_t(\lambda)}^{\lambda} \frac{\ddr u}{\Psi(u)}+\int_\lambda^{\lambda_0} \frac{\ddr u}{\Psi(u)}=t+\varphi(\lambda).&
\end{align}
{{Similarly if $b<0$, then $\Psi(u)\leq 0$ for $0 \leq u\leq \rho$ and provided that $v_t(\lambda)\in (0,\lambda_0)$
\begin{align}\label{phisuper}
\varphi(v_t(\lambda))&=-\int_{v_t(\lambda)}^{\lambda} \frac{\ddr u}{\Psi(u)}-\int_\lambda^{\lambda_0} \frac{\ddr u}{\Psi(u)}=-t+\varphi(\lambda).&
\end{align}
Applying $g$ to both sides entails that if $b\geq 0$
\beqlb\label{vg}
v_t(\lambda)=g(\varphi(\lambda)+t) ,\quad 0<\lambda<\lambda_0, \  t>0
\eeqlb
and if $b<0$  and { $v_t(\lambda)\in(0,\rho)$}
\beqlb\label{-vg}
v_t(\lambda)=g(\varphi(\lambda)-t) ,\quad 0<\lambda<\lambda_0, \  t>0.
\eeqlb
}}

Then for any $x,y>0$ such that $\Psi$ never attains zero between $x$ and $y$,  we obtain by a change of variable that
\begin{equation}\label{phig}
\int_x^y\frac{\Phi(u)}{|\Psi(u)|}\ddr u=\int_{g(\varphi(x))}^{g(\varphi(y))}\frac{\Phi(u)}{|\Psi(u)|}\ddr u=\int_{\varphi(y)}^{\varphi(x)}\Phi(g(u))\ddr u.
\end{equation}

Inspired by Pinsky \cite[Theorem 2]{Pinsky},
we introduce the following function to characterize the divergence of the integral in (\ref{cinfty}):
\begin{equation}\label{effdrift}
H(x):=\left\{ \begin{array}{l}
\displaystyle\frac{1}{|b|}\int_{e^{-x}}^1\frac{\Phi(u)}{u}\ddr u, \mbox{ if } b\in(-\infty,0)\cup(0,\infty);\\
\\
\displaystyle\int_{g(x)}^{\lambda_0}\frac{\Phi(u)}{|\Psi(u)|}\ddr u, \mbox{ if }b=0 \\
\end{array}\right., \quad x\geq 0
\end{equation}
where $H(0)$ takes the value $\lim_{x\to 0}H(x)=0$ (using \eqref{gx}).
%
It is not hard to check that the condition (\ref{cinfty}) is equivalent to $H(x)\underset{x\rightarrow  \infty}{\longrightarrow}\infty$.
Based on (\ref{defvarphi}), a simple calculation shows that 
$H'$ is strictly decreasing and $H'(x)\rightarrow0$ as $x\rightarrow\infty$. 
We introduce now different regimes of speed of divergence of the function $H$ at $\infty$.
We refer the reader to Bingham et al. \cite{Bingham87} for a reference on those functions. The following three different modes of convergence to $0$ of $H'$ correspond to different possible modes of divergence of $H$:
 \begin{itemize}
\item[(S)] (slow-divergence) 
$xH'(x)\rightarrow0$ as $x\rightarrow\infty$ and $H(x)\rightarrow\infty$;
\item[(L)] (log-divergence) $xH'(x)\rightarrow a$ for some constant $a>0$ as $x\rightarrow\infty$;
\item[(F)] (fast-divergence) $xH'(x)\rightarrow\infty$ as $x\rightarrow\infty$ and $H'$ is regularly varying at $\infty$.
\end{itemize}
Note that conditions (L) and (F) always entail $H(x)\longrightarrow \infty$ as $x$ goes to $\infty$, while the limit $xH'(x)\rightarrow0$ as $x\rightarrow\infty$  in condition (S) does not guarantee it. Since $H'(x)\rightarrow0$ as $x\rightarrow\infty$,
Condition (F) can be given in the following equivalent form:
 \beqlb\label{regular H'}
 H'(x)=x^{-\delta}\frac{1}{ L(x)},
 \eeqlb
where $L$ is slowly varying at $\infty$ and $0\leq\delta\leq 1$, and if $\delta=0$, $L(x)\rightarrow\infty$ as $x\rightarrow\infty$; if $\delta=1$, $L(x)\rightarrow0$ as $x\rightarrow\infty$.

Let $h$ be the inverse map of $1/H'$.
Under  (\ref{regular H'}) with $\delta>0$,  $t\mapsto h(t)$ is regularly varying with index $1/\delta$ at $\infty$, i.e. $h(t)\sim t^{1/\delta}L^*(t)$
for some slowly varying function $L^*(t)$. Moreover, since $H'(x)\rightarrow0$ as $x\rightarrow\infty$, 
 we have 
\begin{equation}\label{ainf}
\lim_{t\to\infty}h(t)=\infty.
\end{equation}

It might be difficult to verify the three conditions. We provide a proposition below 
to study the asymptotic behaviors of $H$ and $H'.$ To this purpose, { recall the immigration mechanism
$$\Phi(q)=\beta q+\int_{0}^{\infty}(1-e^{-qu})\nu(\ddr u).$$ 
As observed by Pinsky \cite[P. 244]{Pinsky}, for a non-critical CBI process, a faster rate of divergence in (\ref{cinfty}) implies heavier tails of the L\'{e}vy measure $\nu(\ddr u)$.  The following result specifies this idea. }
\begin{proposition}\label{H'equiv} Assume that $b\neq 0$ and
$H(x)\rightarrow\infty$ as $x\rightarrow\infty$. Denote by $\bar{\nu}$ the tail of $\nu$: for all $u\geq 0$, $\bar{\nu}(u)=\nu([u,\infty))$. Then 
\beqnn
H(x)\sim\frac{1}{|b|}\int_{1}^{e^{x}}\frac{\bar{\nu}(u)}{u}\ddr u,\quad  \text{ as }x\rightarrow \infty.
\eeqnn
Moreover $H'(x)=\Phi(e^{-x})/|b|$ for any $x\geq 0$ and if $u\mapsto \bar\nu(u)$ is slowly varying at $\infty$  then
\beqnn
H'(x)\sim \bar{\nu}(e^x)/|b|,\quad  \text{ as }x\rightarrow \infty.\eeqnn
\end{proposition}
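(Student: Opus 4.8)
The plan is to prove the three assertions of Proposition \ref{H'equiv} more or less independently, all relying on the explicit formula $H'(x)=\Phi(e^{-x})/|b|$, which itself comes directly from differentiating the first line of the definition \eqref{effdrift}: indeed $\frac{\ddr}{\ddr x}\int_{e^{-x}}^1 \frac{\Phi(u)}{u}\,\ddr u = \frac{\Phi(e^{-x})}{e^{-x}}\cdot e^{-x}=\Phi(e^{-x})$, so $H'(x)=\Phi(e^{-x})/|b|$. This is the easiest of the three and I would state it first since the other two are phrased in terms of $H$ and $H'$.

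For the asymptotics of $H$, I would start from $H(x)=\frac{1}{|b|}\int_{e^{-x}}^1\frac{\Phi(u)}{u}\,\ddr u$ and substitute the L\'evy--Khintchine form $\Phi(q)=\beta q+\int_0^\infty(1-e^{-qu})\nu(\ddr u)$. The drift term $\beta q$ contributes $\frac{\beta}{|b|}\int_{e^{-x}}^1\ddr u$, which stays bounded and hence is negligible against $H(x)\to\infty$. For the integral term, I would use Fubini and then the standard identity $\int_0^\infty(1-e^{-qu})\nu(\ddr u)$ comparing with $q\int_0^{1/q}\bar\nu(u)\,\ddr u$ — more precisely the elementary two-sided bound $c_1\, q\!\int_0^{1/q}\!\bar\nu(v)\,\ddr v \le \Phi(q)-\beta q \le c_2\, q\!\int_0^{1/q}\!\bar\nu(v)\,\ddr v$ valid via integration by parts, or equivalently the classical Karamata-type estimate $\Phi(q)\asymp q\bar\nu(1/q)$ up to averaging. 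Plugging $q=u$ and integrating $\frac{1}{u}\cdot u\int_0^{1/u}\bar\nu(v)\,\ddr v$ over $u\in(e^{-x},1)$, after a change of variables $v\mapsto$ appropriate scale one lands on $\frac{1}{|b|}\int_1^{e^x}\frac{\bar\nu(w)}{w}\,\ddr w$; the key point is that since this last quantity diverges (equivalent to $H(x)\to\infty$), the error terms coming from the near-$0$ part of $\nu$ and from the constant $\beta$-contribution are $o$ of the main term, so the $\asymp$ can be upgraded to $\sim$. The cleanest route is probably: write $\Phi(e^{-x})$ out, interchange the order of integration in $\int_{e^{-x}}^1\frac{\ddr u}{u}\int_0^\infty(1-e^{-u w})\nu(\ddr w)$ to get $\int_0^\infty\nu(\ddr w)\int_{e^{-x}}^1\frac{1-e^{-uw}}{u}\ddr u$, split the inner integral at $u=1/w$, bound $1-e^{-uw}$ by $\min(uw,1)$, and recognize $\int_{\max(e^{-x},1/w)}^{1}\frac{\ddr u}{u}=\log(w\wedge e^x)$ up to harmless boundary corrections; then Fubini back gives $\int_0^\infty \log(w\wedge e^x)\,\nu(\ddr w) + O(1) \sim \int_1^{e^x}\frac{\bar\nu(w)}{w}\ddr w$ by one more integration by parts.

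For the last assertion, assume $u\mapsto\bar\nu(u)$ is slowly varying at $\infty$. Since $H'(x)=\Phi(e^{-x})/|b|$, I need $\Phi(e^{-x})\sim\bar\nu(e^x)$ as $x\to\infty$, i.e.\ $\Phi(q)\sim\bar\nu(1/q)$ as $q\to 0$. This is a standard Tauberian/Karamata fact: for a subordinator Laplace exponent with slowly varying tail $\bar\nu$, one has $\int_0^\infty(1-e^{-qu})\nu(\ddr u)\sim q\int_0^{1/q}\bar\nu(v)\,\ddr v\sim\bar\nu(1/q)$ (the first $\sim$ by the usual split-at-$1/q$ argument and dominated convergence after normalizing, the second because $\frac1t\int_0^t\bar\nu\to$ and in fact $\int_0^t\bar\nu(v)\ddr v\sim t\bar\nu(t)$ requires a little care — actually for slowly varying $\bar\nu$, $\int_0^t\bar\nu(v)\ddr v$ need not be $\sim t\bar\nu(t)$; instead I'd use that $\bar\nu$ slowly varying forces $\int_0^{1/q}(1-e^{-qv})$-type averaging to produce $\bar\nu(1/q)$ directly via $\int_0^\infty(1-e^{-s})\,\ddr(-\bar\nu(1/q\cdot \text{rescaled}))$ and the uniform convergence theorem for slowly varying functions). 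The drift term $\beta q=o(\bar\nu(1/q))$ automatically, because a subordinator with slowly varying (hence not integrable-at-$\infty$ in the relevant sense) tail has $\bar\nu(1/q)/q\to\infty$, so $\beta q/\bar\nu(1/q)\to 0$.

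The main obstacle will be the careful Tauberian bookkeeping in the second and third parts: making sure the boundary contributions at $u=e^{-x}$ and $u=1$ in the Fubini computation, and the $\beta$-drift, are genuinely $o$ of the (divergent, resp.\ slowly varying) main term, and invoking the right uniform-convergence version of Karamata's theorem for slowly varying $\bar\nu$ rather than the easier regularly-varying-with-positive-index case. Everything else — the formula for $H'$, the reduction $\Phi(e^{-x})\sim\bar\nu(e^x)\Leftrightarrow\Phi(q)\sim\bar\nu(1/q)$, and the monotonicity facts — is routine and I would only sketch it, citing Bingham et al.\ \cite{Bingham87} for the Karamata and Abelian--Tauberian ingredients.
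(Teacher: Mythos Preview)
Your route differs from the paper's in a useful way. The paper never works with $\int(1-e^{-uw})\nu(\ddr w)$ directly; instead it first integrates by parts to get the identity $\Phi(u)/u=\beta+\int_0^\infty e^{-ut}\bar\nu(t)\,\ddr t$, so that $|b|H(x)=\int_{e^{-x}}^1\frac{\Phi(u)}{u}\,\ddr u=\beta(1-e^{-x})+\int_0^\infty\frac{\bar\nu(t)}{t}(e^{-te^{-x}}-e^{-t})\,\ddr t$. After discarding the bounded pieces this is $\sim\int_1^\infty\frac{\bar\nu(t)}{t}e^{-te^{-x}}\,\ddr t$, and since $x\mapsto\int_1^x\frac{\bar\nu(t)}{t}\,\ddr t$ is slowly varying, a Tauberian theorem converts the Laplace-type integral into $\int_1^{e^x}\frac{\bar\nu(t)}{t}\,\ddr t$. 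The $H'$ part is handled the same way, viewing $\Phi(q)/q$ as the Laplace transform of $\bar\nu$ and quoting a Tauberian theorem. This is shorter and more systematic than your hands-on computation, at the price of invoking Tauberian machinery twice.

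Your Fubini-and-split approach for $H$ does work and in fact gives the sharper $|b|H(x)=\int_1^{e^x}\frac{\bar\nu(w)}{w}\,\ddr w+O(1)$, but your stated justification has a gap. Bounding $1-e^{-uw}$ by $\min(uw,1)$ produces only two-sided estimates with the constant $1-e^{-1}$, i.e.\ an $\asymp$, and there is no general principle that upgrades $\asymp$ to $\sim$ just because both sides diverge. What you actually need is the precise statement that, after the substitution $s=uw$, $\int_{we^{-x}}^{w}\frac{1-e^{-s}}{s}\,\ddr s=\log(w\wedge e^x)+O(1)$ uniformly in $w\ge 1$; this holds because $\int_0^1\frac{1-e^{-s}}{s}\,\ddr s$ and $\int_1^\infty\frac{e^{-s}}{s}\,\ddr s$ are both finite. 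That additive $O(1)$, after the final integration by parts against $\nu(\ddr w)$, is what gets absorbed into the divergent main term. Separately, your hesitation in the $H'$ part is unfounded: Karamata's theorem (index $0$) does give $\int_1^t\bar\nu(v)\,\ddr v\sim t\,\bar\nu(t)$ for slowly varying $\bar\nu$, and $\int_0^1\bar\nu<\infty$ handles the remainder; alternatively, writing $\Phi(q)=q\int_0^\infty e^{-qu}\bar\nu(u)\,\ddr u$ and applying the Tauberian theorem directly is exactly what the paper does.
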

\begin{remark} The tail $u\mapsto\bar\nu(u)$ is slowly varying at $\infty$ if and only if $\Phi$ is slowly varying at $0$, see \cite[Theorem 8/1/6, P.333]{Bingham87} \end{remark}
\proof 
Note that  $\Phi(u)/u=\beta+\int_0^{\infty} e^{-ut}\bar{\nu}(t)\ddr t$.
A simple calculation shows that for $z\in(0,1)$
\beqnn
\int_{z}^1\frac{\Phi(u)}{u}\ddr u\ar=\ar\beta(1-z)+\int_0^{\infty}\frac{\bar{\nu}(u)}{u}(e^{-zu}-e^{-u})\ddr u\\
\ar=\ar\beta(1-z)+\bigg(\int_0^1+\int_1^{\infty}\bigg)\frac{\bar{\nu}(u)}{u}(e^{-zu}-e^{-u})\ddr u.
\eeqnn
Since $\int_0^1u\nu(\ddr u)<\infty$, 
$\int_1^{\infty} \frac{\bar{\nu}(u)}{u}e^{-u}\ddr u<\infty$ and $\int_{0+}\frac{\Phi(u)}{u}\ddr u=\infty$, we have that as $z\rightarrow0$,
\beqlb\label{prop eq 1}
\int_{z}^1\frac{\Phi(u)}{u}\ddr u\sim \int_1^{\infty}\frac{\bar{\nu}(u)}{u}e^{-zu}\ddr u,
\eeqlb
which implies $\int_1^x \frac{\bar{\nu}(u)}{u}\ddr u\rightarrow\infty$ as $x\rightarrow \infty$.
It is not hard to see that $\int_1^x \frac{\bar{\nu}(u)}{u}\ddr u$ is also slowly varying at $\infty$. It follows from Tauberian theorem (see e.g. Bertoin \cite[P.10]{Ber96})
that
 \beqlb\label{prop eq 2}
 \int_1^x \frac{\bar{\nu}(u)}{u}\ddr u\sim\int_1^{\infty} \frac{\bar{\nu}(u)}{u}e^{-u/x}\ddr u, \quad x\rightarrow\infty.
  \eeqlb
The first result follows from (\ref{prop eq 1}) and (\ref{prop eq 2}). For the second result, note that $H'(x)=\Phi(e^{-x})/|b|$. Without loss of generality, we can assume that the parameter $\beta$ in $\Phi$ equals zero. Applying the Tauberian theorem entails that if $\bar{\nu}(x)\sim \ell(x)$ for some slowly varying function $\ell$ at $\infty$, then $\frac{\Phi(u)}{u}\sim \frac{1}{u}\ell(1/u)$ as $u$ goes $0$. Hence we have 
$H'(x)\sim \bar{\nu}(e^x)/|b|,\quad  x\rightarrow \infty.$
\qed
\begin{remark}\label{logassumption} By letting $z$ to $0$ in \eqref{prop eq 1}, we see that $\int_{0}\frac{\Phi(x)}{x}\ddr x=\infty$ if and only if $\int^{\infty}\frac{\bar{\nu}(u)}{u}\ddr u=\infty$, the latter is equivalent to $\int^{\infty}_1\ln x\  \nu(\ddr x)=\infty$.
\end{remark}
Proposition \ref{H'equiv} allows us to reformulate the three regimes, in the non-critical case, in terms of the tail of the immigration measure $\nu$ when the latter has a slowly varying tail.
 \begin{itemize}
\item[(S)] (slow-divergence) 
$\bar{\nu}(x)\ln x \rightarrow 0$  as $x\rightarrow\infty$ and $\int^{\infty}_1\frac{\bar{\nu}(x)}{x}=\infty$;
\item[(L)] (log-divergence) $\bar{\nu}(x)\ln x \rightarrow c$ for some constant $c>0$ as $x\rightarrow\infty$;
\item[(F)] (fast-divergence) $\bar{\nu}(x)\ln x \rightarrow \infty$ as $x\rightarrow\infty$.
\end{itemize}
The constant $c$ in regime (L) matches with $a|b|$ where $a:=\underset{x\rightarrow  \infty}{\lim} xH'(x)$. We give below some examples of explicit immigration measures $\nu$ for which the three different regimes may occur in the non-critical cases. 
\begin{example}Let $b\in(-\infty,0)\cup(0,\infty)$. 
\begin{enumerate}
\item If 
$\bar{\nu}(x)\sim\frac{1}{\ln x\ln\ln x}$ as $x\rightarrow\infty$,
then $H(x)\sim(\ln\ln x)/|b|$ and $H'(x)\sim1/(|b|x\ln x)$. Condition (S) is satisfied. This example corresponds to Example 3 in \cite[Example 3]{Duhalde} of null-recurrent CBI.
\item If 
$\bar{\nu}(x)\sim c/{\ln x}$ 
for some constant $c>0$, as $x\rightarrow\infty$,
then $H'(x)\sim c/(|b|x)$. Condition (L) is satisfied.
\item If
$\bar{\nu}(x)\sim\frac{\ln\ln x}{(\ln x)^\delta}, \quad(0<\delta\leq1)$  as $x\rightarrow\infty$,
then $H'(x)\sim (x^{-\delta}\ln x)/|b|$. If as $x\rightarrow\infty$, $\bar{\nu}(x)\sim1/(\ln\ln x)$, then $H'(x)\sim1/(|b|\ln x)$.  Both cases satisfy Condition (F).
\end{enumerate}
\end{example}

\begin{remark}\label{r3}
Recall the integral test $\mathcal{E}<\infty$ or $\mathcal{E}=\infty$ for transience and recurrence of (sub)critical CBIs given in \eqref{transient}. Plainly, by a change of variable, $\mathcal{E}=\int^{\infty}e^{-H(x)}\ddr x$. If {(F) holds}, or (L) is satisfied with $a>1$, then $\mathcal{E}<\infty$ and the process is transient. In the case (S), or (L) with $a\leq 1$, $\mathcal{E}=\infty$ and the process is {null-recurrent}. 
\end{remark}

We state now a side result on the growth rate of a subordinator whose Laplace exponent is slowly varying at $0$.
\begin{proposition}\label{aside} Let $(I_t,t\geq 0)$ be a subordinator with Laplace exponent $\Phi$. Assume that $\Phi$ is slowly varying at $0$, then 
\[t\Phi(1/I_t)\overset{d}{\longrightarrow} \mathrm{e}_1 \text{ as } t\to \infty.\]
\end{proposition}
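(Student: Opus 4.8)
The plan is to mimic the proof of Theorem \ref{no1}, reading the subordinator $(I_t,t\geq 0)$ as a CBI with $\Psi\equiv 0$ and immigration mechanism $\Phi$. In that degenerate case formula \eqref{J_lambda t} gives $r_t(\lambda)=t\Phi(\lambda)$, so $r_t$ is strictly increasing in $\lambda$ and its inverse is $c_t(\lambda)=\Phi^{-1}(\lambda/t)$, where $\Phi^{-1}$ denotes the (well-defined, continuous, strictly increasing) inverse of $\Phi$ on $(0,\infty)$. Exactly as in Step 3 of Theorem \ref{no1}, it then suffices to prove that for every $\lambda\geq 0$ and $\theta>0$,
\begin{equation*}
\lim_{t\to\infty}\mathbb{E}\bigl[e^{-\theta c_t(\lambda) I_t}\bigr]=e^{-\lambda},
\end{equation*}
since this shows $c_t(\lambda)I_t$ converges in law to a $\{0,\infty\}$-valued variable $Z$ with $\mathbb{P}(Z=0)=e^{-\lambda}$, and hence, arguing as in \eqref{key},
\[
\mathbb{P}\bigl(t\Phi(1/I_t)>\lambda\bigr)=\mathbb{P}\bigl(\Phi(1/I_t)>\lambda/t\bigr)=\mathbb{P}\bigl(1/I_t>c_t(\lambda)\bigr)=\mathbb{P}\bigl(c_t(\lambda)I_t<1\bigr)\longrightarrow e^{-\lambda},
\]
which is precisely $t\Phi(1/I_t)\overset{d}{\to}\mathrm{e}_1$.

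To compute the Laplace transform, note $\mathbb{E}[e^{-\theta c_t(\lambda)I_t}]=\exp(-t\Phi(\theta c_t(\lambda)))=\exp\bigl(-t\Phi(\theta\,\Phi^{-1}(\lambda/t))\bigr)$. Writing $q=\Phi^{-1}(\lambda/t)$, we have $q\to 0$ as $t\to\infty$ and $t=\lambda/\Phi(q)$, so that
\[
t\,\Phi(\theta c_t(\lambda))=\lambda\cdot\frac{\Phi(\theta q)}{\Phi(q)}.
\]
Since $\Phi$ is slowly varying at $0$, $\Phi(\theta q)/\Phi(q)\to 1$ as $q\to 0$ for every fixed $\theta>0$; hence $t\Phi(\theta c_t(\lambda))\to\lambda$ and $\mathbb{E}[e^{-\theta c_t(\lambda)I_t}]\to e^{-\lambda}$, as required. (The case $\lambda=0$ is trivial, giving the constant $1$.)

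The only genuine subtlety is making sure $\Phi^{-1}$ and the slow-variation substitution are used correctly: one needs $\Phi$ to be strictly increasing and continuous with $\Phi(0)=0$ and $\Phi(0+\text{-side})$ ranging over a neighbourhood of $0$, which holds here because $\Phi$ is the Laplace exponent of a subordinator with $\Phi$ slowly varying at $0$ (in particular $\Phi$ is not identically zero, so $\Phi^{-1}$ is defined on an interval $(0,\epsilon)$ and $\Phi^{-1}(\lambda/t)$ makes sense for $t$ large). I would also briefly remark that $I_t\to\infty$ a.s., so $1/I_t\to 0$ and $\Phi(1/I_t)$ is eventually in the domain where slow variation applies; this is automatic here since $\int^\infty(1\wedge z)\,\nu(\d z)<\infty$ does not force $I$ to be a compound Poisson process killed, and in any case $\Phi$ slowly varying at $0$ forces $\Phi(0+)=0$ with $I$ genuinely growing. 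Thus there is no real obstacle beyond bookkeeping; the proof is a one-line specialization of the argument already developed for Theorem \ref{no1}, with the pleasant simplification that the branching terms $v_t(\cdot)$ and $x\,v_t(\cdot)$ are simply absent.
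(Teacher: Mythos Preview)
Your proof is correct and follows essentially the same route as the paper's: both reduce the statement to showing $\mathbb{E}[e^{-\theta \Phi^{-1}(\lambda/t)I_t}]\to e^{-\lambda}$ for every $\theta>0$, compute this Laplace transform as $\exp\bigl(-t\Phi(\theta\Phi^{-1}(\lambda/t))\bigr)$, and use slow variation of $\Phi$ at $0$ to conclude $t\Phi(\theta\Phi^{-1}(\lambda/t))\to\lambda$. Your additional framing via the degenerate CBI with $\Psi\equiv 0$ and the remarks on the domain of $\Phi^{-1}$ are harmless elaborations on the same argument.
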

\begin{proof} This is reminiscent to the steps 1 and 2 in the proof of Theorem \ref{no1}. Observe that the stated convergence holds if and only if for all $\lambda>0$ \[\mathbb{P}(t\Phi(1/I_t)>\lambda)=\mathbb{P}(I_t<1/\Phi^{-1}(\lambda/t))=\mathbb{P}(\Phi^{-1}(\lambda/t)I_t<1)\underset{t\rightarrow \infty}{\longrightarrow} e^{-\lambda}.\]
The latter will occur if for any $\theta>0$, $\mathbb{E}[e^{-\theta \Phi^{-1}(\lambda/t)I_t}]\underset{t\rightarrow \infty}{\longrightarrow} e^{-\lambda}$.
Since $\Phi$ is slowly varying at $0$ and $\Phi^{-1}(\lambda/t)\longrightarrow 0$ as $t\to \infty$, we have 
\[\frac{t}{\lambda}\Phi(\theta \Phi^{-1}(\lambda/t))=\frac{\Phi(\theta \Phi^{-1}(\lambda/t))}{\Phi( \Phi^{-1}(\lambda/t))}\underset{t\rightarrow \infty}{\longrightarrow} 1.\]
Therefore
$$\mathbb{E}[e^{-\theta \Phi^{-1}(\lambda/t)I_t}]=e^{-t\Phi(\theta \Phi^{-1}(\lambda/t))}\underset{t\rightarrow \infty}{\longrightarrow} e^{-\lambda}$$
which finishes the proof.\qed
\end{proof}
In the next subsection, we study how the convergence results can be made explicit by combining Theorem \ref{no1} and the three conditions. These results can be seen as the continuous analogues of those in \cite{Pakes}. 


\subsubsection{Non-critical case}

Consider a non-critical CBI process $(Y_t, t\geq 0)$, i.e.\;$b\neq0$. Recall $v_{-t}(\lambda)$ defined by (\ref{eta integral}) and  $\lambda_0$ given in (\ref
{defvarphi}). Set
\begin{equation*}
\rho_t:=\left\{ \begin{array}{l}
1, \mbox{ if } b>0;\\
v_{-t}(\lambda_0), \mbox{ if }b<0. \\
\end{array}\right.
\end{equation*}
From Theorem \ref{PL} and Theorem \ref{theorem1}-(ii), if (\ref{cinfty}) holds, then $\rho_tY_{t}$ converges to $\infty$ at least in probability.   

\begin{theorem}\label{cno1}
Assume that $b\neq0$.

{\rm (i)}
If Condition (S) holds, let $m(x):=\exp(\int_{1/x}^1\frac{\Phi(u)}{\Psi(u)}\ddr u)$ for $x>0$.  Then 

\begin{equation}\label{U}\frac{\ln \rho_tY_t}{t}\overset{p}{\longrightarrow}0\quad \text{ and }\quad
m(\rho_tY_t)/m(e^{|b|t})\stackrel{d}{\longrightarrow} U,  \text{ as }t\to\infty,\end{equation}
where $U$ is uniformly distributed on $[0,1]$.

{\rm (ii)} If Condition (L) holds, then \beqlb\label{U_L}
\frac{\ln \rho_tY_t}{t}\overset{d}{\longrightarrow}|b|U_L,  \text{ as }t\to\infty, 
\eeqlb
where $\mathbb{P}(U_L\leq \lambda)=\left(\frac{\lambda}{1+\lambda}\right)^a, \quad \lambda\geq0$.

{\rm (iii)} If Condition (F) holds (i.e. \eqref{regular H'} holds with $0\leq \delta\leq 1$), then 
$$\frac{\ln Y_t}{t}\overset{p}{\longrightarrow}\infty\quad\mbox{and}\quad t\Phi(1/Y_t)\overset{d}{\longrightarrow}\mathrm{e}_1,  \text{ as }t\to\infty.$$
In particular, if $0<\delta\leq 1$ in (\ref{regular H'}), then  we have 

\begin{equation}\label{subh} h(t)= t^{1/\delta}L^*(t)\quad\text{and}\quad\frac{\ln Y_t}{h(|b|t)}\overset{d}{\longrightarrow}U_F, \text{ as }t\to\infty,\end{equation}
where $L^*$ is some slowly varying function at $\infty$ and $U_F$ follows the extreme distribution given by
$\P(U_F\leq \lambda)=\exp(-1/\lambda^{\delta}), \quad \lambda\geq0$.
\end{theorem}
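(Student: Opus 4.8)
The plan is to derive all three cases from the master convergence in Theorem \ref{no1}, namely $r_t(1/Y_t)\overset{d}\to\mathrm{e}_1$, by inverting and re-expressing $r_t$ in terms of the auxiliary functions $\varphi$, $g$, $H$ introduced in Section \ref{threeregimes}. First I would record the exact link between $r_t$ and $H$. Using \eqref{phig} with $x=v_t(\lambda)$, $y=\lambda$ and \eqref{phisub}--\eqref{phisuper}, we get, for $0<\lambda<\lambda_0$ (and $v_t(\lambda)\in(0,\rho)$ in the supercritical case),
\begin{equation*}
r_t(\lambda)=\int_{v_t(\lambda)}^\lambda\frac{\Phi(u)}{|\Psi(u)|}\ddr u=\int_{\varphi(\lambda)}^{\varphi(\lambda)+t}\Phi(g(u))\ddr u\quad(b>0),\qquad r_t(\lambda)=\int_{\varphi(\lambda)-t}^{\varphi(\lambda)}\Phi(g(u))\ddr u\quad(b<0),
\end{equation*}
and since $H'(x)=\Phi(e^{-x})/|b|$ in the non-critical case (Proposition \ref{H'equiv}) while $\Phi(g(u))=|b|\,H'(\varphi\text{-shift})$ after the change of variables matching $g$ to $e^{-\cdot}$ via $\varphi$, one identifies $r_t(\lambda)=H(\varphi(\lambda)+t)-H(\varphi(\lambda))$ when $b>0$ and $r_t(\lambda)=H(\varphi(\lambda))-H(\varphi(\lambda)-t)$ when $b<0$. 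The key point is that $\varphi(1/Y_t)$ is, up to constants, $\ln(\rho_tY_t)$-type quantity: in the supercritical case $v_{-t}(\lambda_0)=g(\varphi(\lambda_0)+t)$ by \eqref{-vg}, so $\varphi(\rho_t)=t+\varphi(\lambda_0)$, hence $\varphi(1/(\rho_tY_t))\approx \varphi(1/Y_t)-t$ by the near-linearity of $\varphi$ (because $|\Psi(u)|\sim|b|u$ near $0$, $\varphi(u)\sim |b|^{-1}\ln(1/u)$ up to an additive constant). Thus $r_t(1/Y_t)=H(\varphi(1/Y_t))-H(\varphi(1/Y_t)-t)$ (resp. $+t$), and with $S_t:=\varphi(1/Y_t)$, Theorem \ref{no1} says $H(S_t)-H(S_t\mp t)\overset{d}\to\mathrm{e}_1$, while $S_t\mp t\approx |b|^{-1}\ln(\rho_tY_t)$ up to a bounded term.

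For case (S): since $xH'(x)\to0$ and $H(x)\to\infty$ with $H$ concave, the increment $H(S_t)-H(S_t-t)$ being of order one forces $t/S_t\to 0$ (else the increment would be $o(1)$ or, using concavity, comparable to $tH'(S_t-t)$ which, with $tH'\to0$... more carefully: concavity gives $tH'(S_t)\le H(S_t)-H(S_t-t)\le tH'(S_t-t)$, and $tH'(S_t-t)\ge c$ plus $xH'(x)\to0$ force $S_t-t=o(t)$, i.e.\ $S_t/t\to1$, while $S_t-t\to\infty$). Hence $\ln(\rho_tY_t)/t=|b|(S_t-t)/t+o(1)\overset{p}\to0$, the first claim. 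For the second, note $\ln m(x)=\int_{1/x}^1\frac{\Phi}{\Psi}=\pm(H(\varphi(1/x))-\text{const})$, so $m(\rho_tY_t)/m(e^{|b|t})$ is essentially $\exp\big(\mp(H(S_t-t)-H(t)+O(1))\big)$; writing $H(S_t-t)-H(t)=\big(H(S_t)-H(S_t-t)\big)\cdot\frac{H(S_t-t)-H(t)}{H(S_t)-H(S_t-t)}$ and using that the denominator $\to\mathrm{e}_1$ while, by slow variation of $H$ in regime (S) (since $H'$ varies slowly-ish: $xH'(x)\to0$ makes $H$ slowly varying), the ratio behaves deterministically, one should land on $\exp(-\mathrm{e}_1)\eqdist U[0,1]$ after careful bookkeeping. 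This bookkeeping — getting the slowly varying factors to cancel and matching $m(e^{|b|t})$ exactly — is where I expect the most friction.

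For case (L): $xH'(x)\to a>0$ means $H(x)\sim a\ln x$, so $H(S_t)-H(S_t\mp t)\to a\ln\frac{S_t}{S_t\mp t}$; setting this equal in law to $\mathrm{e}_1$ and solving, $\frac{S_t}{S_t-t}\overset{d}\to e^{\mathrm{e}_1/a}$ (subcritical) resp.\ $\frac{S_t}{S_t+t}\overset{d}\to e^{-\mathrm{e}_1/a}$, equivalently $\frac{t}{S_t-t}\overset{d}\to e^{\mathrm{e}_1/a}-1=:U_L^{-1}\cdot(\dots)$; checking $\mathbb{P}(U_L\le\lambda)=(\lambda/(1+\lambda))^a$ is the distribution of $1/(e^{\mathrm{e}_1/a}-1)$ is a one-line computation, and then $\ln(\rho_tY_t)/t=|b|(S_t\mp t)/t\overset{d}\to|b|U_L$. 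For case (F): $xH'(x)\to\infty$ forces the increment $H(S_t)-H(S_t\mp t)=O_\P(1)$ to come from $S_t\mp t$ staying \emph{bounded}-ish relative to... no — rather it forces $S_t\mp t$ to be of order $h(\text{something})$; more precisely, with $H'$ regularly varying of index $-\delta$, concavity plus the increment bound gives $S_t\mp t\sim$ (a quantity where $H'$ is evaluated at scale $t$), and the cleanest route is the one already signposted: $t\Phi(1/Y_t)=|b|\,t\,H'(\ln Y_t)$, and since $\Phi=|b|H'(\ln\cdot)$ is slowly varying at $0$ when $\delta=0$ (regime (F) with $\delta=0$), apply Proposition \ref{aside} directly to get $t\Phi(1/Y_t)\overset{d}\to\mathrm{e}_1$; for $\delta>0$ one reruns the argument of Theorem \ref{no1}/Proposition \ref{aside} with the regularly varying $\Phi$, using that $\Phi^{-1}(\lambda/t)$ relates to $1/Y_t$ and $h(t)=t^{1/\delta}L^*(t)$ by inverting \eqref{regular H'}, yielding $\ln Y_t/h(|b|t)\overset{d}\to U_F$ with $\mathbb{P}(U_F\le\lambda)=\exp(-1/\lambda^\delta)$ after matching $\mathbb{P}(h(|b|t)/\ln Y_t>\mu)$ to $\mathbb{P}(\mathrm{e}_1>\mu^\delta\cdot\text{const})$. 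The main obstacle throughout is not any single hard estimate but the uniform control of the additive $O(1)$ discrepancies between $\varphi(1/Y_t)$, $\ln(1/Y_t)/|b|$, and $\ln(\rho_tY_t)$, and showing these truly wash out in the limit in each regime; regime (S), where everything is slowly varying and cancellations are delicate, is the one I would budget the most care for.
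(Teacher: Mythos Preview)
Your plan is the paper's: feed Theorem \ref{no1} into the function $H$ and read off the three regimes via mean-value/concavity arguments. The paper's pivot is
\[
\operatorname{sgn}(b)\big[H(-\ln v_t(1/Y_t))-H(\ln Y_t)\big]=(1+o(1))\int_{v_t(1/Y_t)}^{1/Y_t}\frac{\Phi(u)}{\Psi(u)}\,\mathrm{d}u\ \overset{d}{\longrightarrow}\ \mathrm{e}_1,
\]
obtained from $|\Psi(u)|\sim|b|u$, after which mean-value on this $H$-increment plus Lemma \ref{lemma no2} (the precise asymptotics of $\ln v_t(\lambda_t)$ and of $\ln\rho_t$) handles each case. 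Your variant with $S_t=\varphi(1/Y_t)$ is equivalent in spirit, but two concrete points in the sketch would fail as written.

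First, the ``identification'' $r_t(\lambda)=H(\varphi(\lambda)+t)-H(\varphi(\lambda))$ is false in the non-critical case: there $H'(x)=\Phi(e^{-x})/|b|$, not $\Phi(g(x))$, so the exact $\varphi$--$g$ change of variables gives $r_t(\lambda)=\int_{\varphi(\lambda)}^{\varphi(\lambda)+t}\Phi(g(u))\,\mathrm{d}u$, which is only \emph{asymptotically} an $H$-increment. Relatedly, ``$S_t\mp t\approx|b|^{-1}\ln(\rho_tY_t)$ up to a bounded term'' is not justified: the difference $\varphi(u)-|b|^{-1}\ln(1/u)$ need not stay bounded (it involves $\int_0(\frac{1}{|\Psi|}-\frac{1}{|b|u})\,\mathrm{d}u$, which can diverge). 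Only the asymptotic equivalence $\varphi(\lambda)\sim|b|^{-1}\ln(1/\lambda)$ holds, and that is exactly what the paper uses via Lemma \ref{lemma no2}; the bookkeeping you worry about in (S) is done there through asymptotic ratios, not additive constants.

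Second, in case (F) you propose to ``apply Proposition \ref{aside} directly'' to get $t\Phi(1/Y_t)\overset{d}{\to}\mathrm{e}_1$. That proposition is about a subordinator, whose Laplace transform is $e^{-t\Phi(\cdot)}$; a CBI does not have this form, so the argument does not transfer. The paper instead first uses the mean-value theorem on the $H$-increment together with $xH'(x)\to\infty$ to force $-\ln v_t(1/Y_t)\overset{p}{\sim}\ln Y_t$, whence $t/\varphi(1/Y_t)\overset{p}{\to}0$ (this is what gives $\ln Y_t/t\overset{p}{\to}\infty$); then, using that $\Phi\circ g$ is regularly varying (since $-\ln g(u)\sim|b|u$ composes with the regular variation of $H'$), a second mean-value step on $\int_{\varphi(1/Y_t)}^{\varphi(1/Y_t)\pm t}\Phi(g(u))\,\mathrm{d}u$ yields the asymptotic $r_t(1/Y_t)\sim t\,\Phi(1/Y_t)$, and Theorem \ref{no1} finishes. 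Your sketch for (L) is correct and matches the paper.
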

\begin{remark}\label{pinskyproof}
 i) We observe from Proposition \ref{aside}, that in the fast regime (F), the branching part plays essentially no role in the growth of the non-critical CBI$(\Psi,\Phi)$ process, since it has the same growth rate as the immigration subordinator $(I_t,t\geq 0)$.

ii) The statement (i) of Theorem \ref{cno1} has been given in Pinsky \cite[Theorem 2]{Pinsky} which however contains some incorrectness. The corrected convergence in  \cite[Theorem 2]{Pinsky} reads: 
$$\P_x(m(X_t)/m(e^{ct})\geq u^{-1})\to u, \forall 0< u\leq 1 \text{ as } t\to\infty.$$
\end{remark}
We will first prove the following lemmas.  

\begin{lemma}\label{lemma no2} Assume that $b\neq0$. Then $\varphi(\lambda)\underset{\lambda \rightarrow 0}{\sim}-\frac{1}{|b|}\ln \lambda$, $ -\ln g(x)\underset{x \rightarrow\infty}{\sim} |b|x, 
 $  and 
 \beqnn
 \ln{v_t(\lambda_t)}\sim-b(t+\varphi(\lambda_t)) \quad\mbox{if}\ b>0; \quad \ln{v_t(\rho_t\lambda_t)}\sim\ln \lambda_t\quad\mbox{if}\ b<0,  \eeqnn
 for any $\lambda_t\rightarrow0+$ as $t\rightarrow\infty$.
   \end{lemma}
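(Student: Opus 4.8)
The plan is to treat the four claimed asymptotics in order, starting from the behaviour of $\Psi$ near $0$ and then propagating it through the defining relations \eqref{defvarphi}, \eqref{vg} and \eqref{-vg}. First I would record the basic fact, already used in the proof of Theorem \ref{no1}, that since $b\neq 0$ and $|\Psi'(0+)|=|b|<\infty$ with $\Psi$ convex (resp. $-\Psi$ concave near $0$), one has $\Psi(u)\sim bu$ as $u\to 0$, hence $|\Psi(u)|\sim |b|u$. Plugging this into \eqref{defvarphi}, $\varphi(\lambda)=\int_\lambda^{\lambda_0}\frac{\ddr u}{|\Psi(u)|}$, and splitting the integral into a bounded piece on $[\epsilon,\lambda_0]$ plus the divergent piece on $[\lambda,\epsilon]$ where $\frac{1}{|\Psi(u)|}\sim\frac{1}{|b|u}$, gives $\varphi(\lambda)\sim\frac{1}{|b|}\int_\lambda^{\epsilon}\frac{\ddr u}{u}=\frac{1}{|b|}\ln(\epsilon/\lambda)\sim-\frac{1}{|b|}\ln\lambda$ as $\lambda\to 0$; a standard $\epsilon$-argument (the bounded piece is $o(\ln\lambda)$) makes this rigorous. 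This is the first claim. For the second claim, apply $g=\varphi^{-1}$: writing $x=\varphi(\lambda)$ so $\lambda=g(x)$ and $x\to\infty$ iff $\lambda\to 0$ by \eqref{gx}, the relation $x\sim-\frac{1}{|b|}\ln g(x)$ inverts to $-\ln g(x)\sim |b|x$.

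For the two remaining asymptotics I would use \eqref{vg} and \eqref{-vg} together with the first claim. In the subcritical case $b>0$, \eqref{vg} gives $v_t(\lambda_t)=g(\varphi(\lambda_t)+t)$; since $\lambda_t\to 0$ forces $\varphi(\lambda_t)\to\infty$, the argument $\varphi(\lambda_t)+t\to\infty$, so by the second claim $-\ln v_t(\lambda_t)=-\ln g(\varphi(\lambda_t)+t)\sim |b|(\varphi(\lambda_t)+t)=b(t+\varphi(\lambda_t))$, i.e. $\ln v_t(\lambda_t)\sim -b(t+\varphi(\lambda_t))$, which is the third claim. In the supercritical case $b<0$, one must first check that $\rho_t\lambda_t=v_{-t}(\lambda_0)\lambda_t$ is small enough to lie in $(0,\lambda_0)$ so that \eqref{-vg} applies — this follows since $v_{-t}(\lambda_0)\to 0$ and $\lambda_t\to 0$. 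Then $v_t(\rho_t\lambda_t)=g(\varphi(\rho_t\lambda_t)-t)$, and using the first claim, $\varphi(\rho_t\lambda_t)\sim-\frac{1}{|b|}\ln(\rho_t\lambda_t)=-\frac{1}{|b|}\big(\ln\rho_t+\ln\lambda_t\big)$. Since $\rho_t=v_{-t}(\lambda_0)$ satisfies $-\ln\rho_t\sim |b|t$ (this is exactly the second claim applied through \eqref{-vg} with $\lambda=\lambda_0$, or directly from \eqref{equationeta}), we get $\varphi(\rho_t\lambda_t)-t\sim -\frac{1}{|b|}\ln\lambda_t + t - t = -\frac{1}{|b|}\ln\lambda_t\to\infty$; applying the second claim once more, $-\ln v_t(\rho_t\lambda_t)=-\ln g(\varphi(\rho_t\lambda_t)-t)\sim |b|\big(\varphi(\rho_t\lambda_t)-t\big)\sim -\ln\lambda_t$, i.e. $\ln v_t(\rho_t\lambda_t)\sim\ln\lambda_t$, the fourth claim.

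The main obstacle I anticipate is bookkeeping with the $\sim$ relations when several quantities tend to $\infty$ or $0$ simultaneously: the relation $a_t\sim b_t$ combined with $c_t\sim d_t$ does not in general give $a_t-c_t\sim b_t-d_t$ unless one knows the differences do not cancel to lower order. In the supercritical case in particular, the cancellation of the two $t$'s in $\varphi(\rho_t\lambda_t)-t$ must be handled by tracking the \emph{error terms} rather than just leading orders — i.e. one should write $\varphi(\rho_t\lambda_t)=-\frac{1}{|b|}\ln(\rho_t\lambda_t)+o\big(\ln(\rho_t\lambda_t)\big)$ and $-\ln\rho_t=|b|t+o(t)$, and then verify that $-\frac{1}{|b|}\ln\lambda_t$ dominates both error contributions, which uses that $\lambda_t\to 0$ but permits $\lambda_t$ to decay arbitrarily slowly. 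A clean way to avoid the subtraction issue entirely is to go back to the integral identity \eqref{equationeta}/\eqref{cumulantintegral} directly: $t=\int_{v_t(\rho_t\lambda_t)}^{\rho_t\lambda_t}\frac{\ddr u}{|\Psi(u)|}\sim\frac{1}{|b|}\ln\frac{\rho_t\lambda_t}{v_t(\rho_t\lambda_t)}$, and separately $t\sim\frac{1}{|b|}\ln\frac{\lambda_0}{\rho_t}=\frac{1}{|b|}\big(\ln\lambda_0-\ln\rho_t\big)$, from which $\ln v_t(\rho_t\lambda_t)\sim\ln(\rho_t\lambda_t)-|b|t\sim\ln\lambda_t$ follows after substituting $\ln\rho_t\sim-|b|t$; this reduces everything to one asymptotic estimate for $\int\frac{\ddr u}{|\Psi(u)|}$ and postpones all cancellations to a single, transparent step.
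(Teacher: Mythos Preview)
Your treatment of the first three claims matches the paper's and is correct. The gap is in the supercritical case $b<0$, and it is precisely the cancellation you flag but do not actually resolve. Both of your proposed routes reduce to an identity of the form $\ln v_t(\rho_t\lambda_t)=\ln\lambda_t+o(t)$, obtained by combining two asymptotics each carrying an $o(t)$ error. Since the lemma allows $\lambda_t\to 0$ \emph{arbitrarily slowly} (say $\lambda_t=1/t$, so $|\ln\lambda_t|=\ln t$), the residual $o(t)$ can dwarf $|\ln\lambda_t|$, and the conclusion $\ln v_t(\rho_t\lambda_t)\sim\ln\lambda_t$ does not follow. Your ``clean way'' has exactly the same defect: from $t\sim\frac{1}{|b|}\ln\frac{v_t(\rho_t\lambda_t)}{\rho_t\lambda_t}$ and $t\sim-\frac{1}{|b|}\ln\rho_t$ you still subtract two quantities of size $|b|t$ and inherit an $o(t)$ remainder.

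The fix the paper uses is to notice that the relation between $\rho_t$ and $t$ is \emph{exact}, not merely asymptotic: since $\rho_t=v_{-t}(\lambda_0)$, equation \eqref{equationeta} gives $\varphi(\rho_t)=t$ identically, i.e.\ $\rho_t=g(t)$. Hence
\[
\varphi(\rho_t\lambda_t)-t=\varphi(\rho_t\lambda_t)-\varphi(\rho_t)=\int_{\rho_t\lambda_t}^{\rho_t}\frac{\ddr u}{|\Psi(u)|},
\]
a single integral with both endpoints tending to $0$. On that interval $|\Psi(u)|\sim|b|u$ uniformly, so the integral is $\sim\frac{1}{|b|}\ln(1/\lambda_t)$ with error $o(|\ln\lambda_t|)$, not $o(t)$. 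Then $-\ln g(x)\sim|b|x$ applied at $x=\varphi(\rho_t\lambda_t)-t$ gives $\ln v_t(\rho_t\lambda_t)\sim\ln\lambda_t$ directly. You were one step away: you already observed (parenthetically) that $-\ln\rho_t\sim|b|t$ comes from \eqref{equationeta}; the point is that \eqref{equationeta} gives $\varphi(\rho_t)=t$ on the nose, and using that equality rather than its asymptotic consequence is what kills the cancellation.
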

 \proof Note that $$\varphi(\lambda)=\int_{\lambda}^{\lambda_0}\frac{\ddr u}{|\Psi(u)|}\underset{\lambda \rightarrow 0}{\sim}\int_{\lambda}^{\lambda_0}\frac{\ddr u}{|b|u}\underset{\lambda \rightarrow 0}{\sim}-\frac{1}{|b|}\ln \lambda.$$
Since $g$ is the inverse function of $\varphi,$ we have   
$ -\ln g(x)\underset{x \rightarrow\infty}{\sim} |b|x.
 $ If $b>0$,  we establish the last statement by plugging $x=\varphi(\lambda_t)+t$ in \eqref{vg}. Now we turn to $b<0$. It follows from (\ref{kappaconv}) and (\ref{kappa}) that 
\beqnn
v_t(\rho_t\lambda)\underset{t\rightarrow \infty}\longrightarrow v_{-\frac{\ln\lambda}{b}}(\lambda_0)\quad\mbox{and}\quad v_{-\frac{\ln\lambda}{b}}(\lambda_0)\underset{\lambda \rightarrow 0}\longrightarrow 0.\eeqnn 
Recall that $\lambda\mapsto v_t(\rho_t\lambda)$ is non-decreasing. Recall also that if $b>0$ then $\rho_t \equiv 1$; if $b<0$ then $\lim_{t\to\infty}\rho_t=0.$ Then we have $\rho_t\lambda_t, v_t(\rho_t\lambda_t)\in(0,\lambda_0)$ for sufficiently large $t$. 
Then by (\ref{-vg}),
\beqlb\label{vg<02}v_t(\rho_t\lambda_t)=g(\varphi(\rho_t\lambda_t)-t)
\eeqlb
for large $t$.  Recall that $\rho_t=v_{-t}(\lambda_0)$.
By (\ref{equationeta}), we obtain
\beqnn
\varphi(\rho_t\lambda_t)-t
=\int_{\rho_t\lambda_t}^{\rho_t}(-1/\Psi(u))\ddr u\sim \int_{\rho_t\lambda_t}^{\rho_t}(-1/bu)\ddr u=\frac{\ln \lambda_t}{b}, \quad \mbox{as}\ t\rightarrow\infty.\eeqnn
Putting $x=\varphi(\rho_t\lambda_t)-t$ into $\ln g(x)\sim bx\,(x\rightarrow\infty)$ and using (\ref{vg<02}), we get the last statement for $b<0$. \qed

\begin{lemma}\label{rho_t in supercritical case} If $b<0$, then $\ln\rho_t\sim bt$ as $t\rightarrow\infty$.
\end{lemma}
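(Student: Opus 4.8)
The plan is to start from the definition $\rho_t = v_{-t}(\lambda_0)$ and its defining integral equation \eqref{equationeta}, namely $\int_{\lambda_0}^{\rho_t}\frac{\ddr z}{\Psi(z)} = t$. Since $b<0$ we are in the supercritical regime, $\rho>0$, and $\Psi(z)<0$ on $(0,\rho)$; moreover by Lemma \ref{eta} we know $\rho_t = v_{-t}(\lambda_0)\to 0$ as $t\to\infty$. So for large $t$ the integral $\int_{\rho_t}^{\lambda_0}\frac{\ddr z}{|\Psi(z)|} = t$ holds, i.e. $\varphi(\rho_t) = t$ in the notation of \eqref{defvarphi} (taking the same $\lambda_0$), hence $\rho_t = g(t)$.

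Now I would simply invoke Lemma \ref{lemma no2}, which gives $-\ln g(x)\underset{x\to\infty}{\sim} |b|x$. Plugging $x=t$ yields $-\ln \rho_t = -\ln g(t) \sim |b|t$, that is $\ln\rho_t \sim -|b|t = bt$ since $b<0$. This is exactly the claim.

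Alternatively, to keep the argument self-contained, one can argue directly: from $\int_{\rho_t}^{\lambda_0}\frac{\ddr z}{|\Psi(z)|}=t$ and the fact that $|\Psi(z)|\sim |b|z$ as $z\to 0$ (non-critical case), one gets $\int_{\rho_t}^{\lambda_0}\frac{\ddr z}{|\Psi(z)|} \sim \frac{1}{|b|}\int_{\rho_t}^{\lambda_0}\frac{\ddr z}{z} = \frac{1}{|b|}\bigl(\ln\lambda_0 - \ln\rho_t\bigr)$; since the left side equals $t\to\infty$ and $\ln\lambda_0$ is a constant, this forces $-\frac{1}{|b|}\ln\rho_t \sim t$, i.e. $\ln\rho_t\sim bt$. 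The only mild subtlety is justifying that the asymptotic equivalence of integrands transfers to the integrals as $\rho_t\to 0$: this follows because for any $\varepsilon>0$ there is $\eta>0$ with $(1-\varepsilon)|b|z \le |\Psi(z)|\le (1+\varepsilon)|b|z$ on $(0,\eta)$, and the contribution of the fixed interval $[\eta,\lambda_0]$ to $\int_{\rho_t}^{\lambda_0}\frac{\ddr z}{|\Psi(z)|}$ is a bounded constant, negligible compared to $t$.

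There is essentially no real obstacle here; the lemma is a short corollary of Lemma \ref{lemma no2} (or of \eqref{equationeta} together with $|\Psi(u)|\sim|b|u$). The only thing to be careful about is bookkeeping with signs — $b<0$, $\Psi<0$ on $(0,\rho)$, and $\varphi$ is defined with $|\Psi|$ — so that the final statement comes out as $\ln\rho_t\sim bt$ (a quantity tending to $-\infty$) rather than with the wrong sign.
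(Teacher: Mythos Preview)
Your proof is correct, and in fact more direct than the paper's. The identity $\varphi(\rho_t)=t$, i.e.\ $\rho_t=g(t)$, holds for all $t>0$ (not just large $t$), and then the statement is an immediate consequence of Lemma~\ref{lemma no2}, as you say. Your self-contained variant is essentially the computation underlying that lemma.

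The paper argues differently: it does not go through $\varphi$ and $g$ at all, but instead invokes the last assertion of Lemma~\ref{eta}, namely $\rho_{t+s}/\rho_t\to e^{bs}$ as $t\to\infty$, to conclude that $t\mapsto \rho_{\ln t}$ is regularly varying at $\infty$ with index $b$, and then cites the general fact (Bingham et al., Proposition~1.3.6(i)) that a regularly varying function $f$ of nonzero index $\rho$ satisfies $\ln f(t)\sim \rho\ln t$. Substituting back $t\mapsto e^t$ gives $\ln\rho_t\sim bt$. Your route is shorter and avoids the external reference; the paper's route has the virtue of not using the local estimate $|\Psi(u)|\sim|b|u$ a second time (it is already packaged into Lemma~\ref{eta}), and illustrates the regular-variation viewpoint that the paper uses elsewhere.
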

\proof Recall Lemma \ref{eta} that $\rho_{t+s}/\rho_t\rightarrow e^{bs}$ as $t\rightarrow\infty$. Consider the function
$\rho_{\ln t}$ on $(1,\infty)$. We observe that $\rho_{\ln t}$ is regularly varying with index $b$ at $\infty$.
 By Bingham et al. \cite[Proposition 1.3.6 (i)]{Bingham87},
  $
  \ln \rho_{\ln t}\sim b\ln t\ (t\rightarrow\infty).
  $
\qed



\noindent {\it Proof of Theorem \ref{cno1}} \; { Recall that under the assumption \eqref{cinfty}, the process $(Y_t, t\geq 0)$ goes to infinity in probability. By the Skorokhod representation theorem,  there is a probability space on which is defined a process whose one-dimensional laws  are those of $(Y_{t},t\geq 0)$, which tends to $\infty$ almost surely when $t$ goes towards $\infty$. See \cite[Corollary]{BlackwellDubins} for a continuous-time version of Skorokhod representation theorem and apply it at time $t=\infty$. In the supercritical case, we apply the Skorokhod representation theorem to the bivariate process $(Y_{t},\rho_{t}Y_{t})_{t\geq 0}$, so that on a certain probability space, copies of both coordinates go to infinity almost surely. We stress that our aim is to establish some convergences in law. One can equivalently work along an arbitrary sequence $(t_n)_{n\geq 1}$ which tends to $\infty$ and apply usual Skorokhod representation theorem. 
}

By the definition of $H$ and Theorem \ref{no1},
 {\begin{equation}\label{equi}
\sign b\Big[H(-\ln v_t(1/Y_t))-H(\ln Y_t)\Big]=(1+o(1))\int_{v_t(1/Y_t)}^{1/Y_t}\frac{\Phi(u)}{\Psi(u)}\ddr u \overset{d}{\longrightarrow}\mathrm{e}_1.
 \end{equation}}
(i) Applying first the mean value theorem, and then Condition (S),   we see that there exists $\theta_t$ between $\ln Y_t$ and $-\ln v_t(1/Y_t)$ such that
 \begin{eqnarray}\label{hh}
 H(-\ln v_t(1/Y_t))-H(\ln Y_t)&=&\int_{\ln Y_t}^{-\ln v_t(1/Y_t)}\frac{uH'(u)}{u}\ddr u\nonumber\\
&=&H'(\theta_t)\theta_t\ln\bigg(-\frac{\ln v_t(1/Y_t)}{\ln Y_t}\bigg)\nonumber\\
&=& o(1)\ln\bigg(-\frac{\ln v_t(1/Y_t)}{\ln Y_t}\bigg).
  \end{eqnarray}
We start with $b>0$. In this case, $-\frac{\ln v_t(1/Y_t)}{\ln Y_t}\geq1$ for sufficiently large $t$. Then a comparison between (\ref{equi}) and (\ref{hh}) entails that $-\frac{\ln v_t(1/Y_t)}{\ln Y_t}\overset{p}\rightarrow+\infty$. It follows from Lemma \ref{lemma no2} with $\lambda_t=1/Y_t$ that 
\beqnn
\frac{\ln Y_t}{t}\overset{p}{\rightarrow}0\quad\mbox{and}\quad-\ln v_t(1/Y_t)\overset{p}{\sim}bt,\quad t\rightarrow\infty.
\eeqnn
Based on the results above, using again Condition (S), there exists $\theta'_t$ between $bt$ and $-\ln v_t(1/Y_t)$ such that
\beqlb\label{m2h}
 H(-\ln v_t(1/Y_t))-H(bt)\ar=\ar H'(\theta'_t)\theta'_t\ln\Big(-\frac{\ln v_t(1/Y_t)}{bt}\Big)\overset{p}{\rightarrow}0,\quad t\rightarrow\infty.
   \eeqlb
Note that 
\begin{eqnarray*}
\frac{m(Y_t)}{m(e^{bt})}={\exp\left(\int^{e^{-bt}}_{1/Y_t}\frac{\Phi(u)}{\Psi(u)}\ddr u\right)}
={\exp\{(H(\ln Y_t)-H(bt))(1+o(1))\}}.\end{eqnarray*}
Also together with (\ref{equi}) and (\ref{m2h}), we have that $\frac{m(Y_t)}{m(e^{bt})}\overset{d}{\rightarrow}\exp(-\mathrm{e}_1)$. We now turn to
$b<0$. In this case, $0<-\frac{\ln v_t(1/Y_t)}{\ln Y_t}\leq1$ for sufficiently large $t$. A comparison between (\ref{equi}) and (\ref{hh}) entails that $-\frac{\ln v_t(1/Y_t)}{\ln Y_t}\overset{p}\rightarrow0$. It follows from Lemma \ref{lemma no2} with $\lambda_t=1/(\rho_tY_t)$ and Lemma \ref{rho_t in supercritical case} that 
\beqnn
-\ln v_t(1/Y_t)\overset{p}{\sim}\ln\rho_tY_t\quad\mbox{and}\quad\ln Y_t\overset{p}{\sim}-bt,\quad t\rightarrow\infty.
\eeqnn
Proceeding as in the case with $b>0$,  we have that $\frac{m(\rho_tY_t)}{m(e^{-bt})}\overset{d}{\rightarrow}\exp(-\mathrm{e}_1)$.

(ii) By Condition (L),
\beqnn
H(-\ln v_t(1/Y_t))-H(\ln Y_t)=\int_{\ln Y_t}^{-\ln v_t(1/Y_t)}H'(u)\ddr u\sim a\int ^{-\ln v_t(1/Y_t)}_{\ln Y_t}\frac{\ddr u}{u},\quad t\rightarrow\infty.\eeqnn
Using Lemma \ref{lemma no2} with $\lambda_t=1/Y_t$ if $b>0$, and using
Lemma \ref{lemma no2} with $\lambda_t=1/(\rho_tY_t)$ and Lemma \ref{rho_t in supercritical case} if $b<0$, we obtain
\beqnn
\sign b\int ^{-\ln v_t(1/Y_t)}_{\ln Y_t}\frac{\ddr u}{u}=
\sign b\ln\bigg(-\frac{\ln v_t(1/Y_t)}{\ln Y_t}\bigg)=\ln \bigg(1+\frac{|b|t}{\ln \rho_tY_t}(1+o(1))\bigg)+o(1).
\eeqnn
Then by (\ref{equi}),
we have Theorem \ref{cno1} (ii).

(iii) {The mean value theorem for integrals shows that there is some $\bar\theta_t$ between $\ln Y_t$ and $-\ln v_t(1/Y_t)$ such that
 \begin{eqnarray*}
 H(-\ln v_t(1/Y_t))-H(\ln Y_t)=H'(\bar\theta_t)\bar\theta_t\ln\bigg(-\frac{\ln v_t(1/Y_t)}{\ln Y_t}\bigg).
  \end{eqnarray*}
By Condition (F) and (\ref{equi}),
we have that
$
-\ln v_t(1/Y_t)\overset{p}{\sim}\ln Y_t.
$ Then it follows from Lemma \ref{lemma no2} (and together with Lemma \ref{rho_t in supercritical case} if $b<0$) that 
 \begin{equation}\label{blny}t/\ln Y_t\overset{p}{\rightarrow}0,\quad\mbox{and}\quad t/\varphi(1/Y_t)\overset{p}{\rightarrow}0,\quad t\rightarrow\infty.\end{equation}
Applying (\ref{vg}), (\ref{-vg}) and \eqref{phig}, we obtain for large $t$, 
\beqlb\label{integral transform}
\int_{v_t(1/Y_t)}^{1/Y_t}\frac{\Phi(u)}{|\Psi(u)|}\ddr u=
\left\{ \begin{array}{l}
\displaystyle\int^{\varphi(1/Y_t)+t}_{\varphi(1/Y_t)}\Phi(g(u))\ddr u, \mbox{ if } b>0;\\
\\
\displaystyle \int^{\varphi(1/Y_t)-t}_{\varphi(1/Y_t)}\Phi(g(u))\ddr u, \mbox{ if } b<0.
\end{array}\right.
\eeqlb
 Note that $\Phi(g(u))=bH'(-\ln g(u))$. 
 
 By the fact that $ -\ln g(u)\sim |b|u$ as $u\rightarrow\infty$ and Condition (F), we have that $\Phi(g(u))$ is regularly varying at $\infty$; see \cite[Proposition 0.8-(iv)]{Res87}.} In the remaining proof, we only deal with $b>0$. The proof for $b<0$ is quite similar and therefore omitted.

By changing variable and applying a mean value theorem to the right-hand side of (\ref{integral transform}), 
\beqlb\label{similar proof}
  \int_{v_t(1/Y_t)}^{1/Y_t}\frac{\Phi(u)}{\Psi(u)}\ddr u
\ar=\ar\varphi(1/Y_t)\int^{t/\varphi(1/Y_t)+1}_{1}\Phi(g(\varphi(1/Y_t)u)) \ddr u\nonumber\\
\ar=\ar t\Phi(g(\varphi(1/Y_t){\hat\theta}_t))   \quad\quad\big(\text{ for some } {\hat\theta}_t\in(1,t/\varphi(1/Y_t)+1)\big)\nonumber\\
\ar\sim\ar t\Phi(g(\varphi(1/Y_t))) \quad (t\rightarrow\infty).
\eeqlb
The validity of the last equivalence is proved as follows: using \eqref{blny}, we have 
$\theta_t\overset{p}{\to}1$ as $t\rightarrow\infty.$ Since $\Phi \circ g$ is regularly varying, the last equivalence  holds by locally uniform convergence, see e.g. \cite[Theorem 1.2.1]{Bingham87}.  Then $t\Phi(1/Y_t)= t\Phi(g(\varphi(1/Y_t)))\overset{d}{\rightarrow}\mathrm{e}_1$ from Theorem \ref{no1}.

Now we focus on the case when $\delta>0$ in (\ref{regular H'}). Note that $H'(x)=\frac{1}{b}\Phi(e^{-x})$. 
Since $h$ is the inverse function of $1/H'$, we have $H'(h(x))=1/x$,  for any $x>b/\Phi(1).$  Then by Karamata's theorem (\cite[P.23]{Res87}), the statement on $h(t)$ in \eqref{subh} holds true.

 For $\lambda>0$, we use again $H'(x)=\frac{1}{b}\Phi(e^{-x})$, and apply (\ref{regular H'}) and \eqref{ainf} to obtain that
\beqnn
\frac{H'(\ln Y_t)}{H'(h(bt)\lambda)} =t\Phi(1/Y_t) \frac{H'(h(bt))}{H'(h(bt)\lambda)}
\overset{d}{\rightarrow}{\lambda^\delta}\mathrm{e}_1 \text{ as }t\to\infty.
 \eeqnn
 Hence,
$ \P(\ln Y_t/h(bt)\leq\lambda)=\P(H'(\ln Y_t))/H'(h(bt)\lambda)\geq1)\underset{t\rightarrow \infty}\longrightarrow\exp(-1/\lambda^\delta)$.
 \qed

 \subsection{On the critical case}\label{criticalsec}
The study  of the critical case, i.e. $b=\Psi'(0+)=0$, is more involved as $v_t(\lambda)$ may have different asymptotics as $t\rightarrow \infty$ and $\lambda\rightarrow 0$ according to the behavior of $\Psi$ near $0$. We make the following assumption on the L\'{e}vy measure $\pi$ in the branching mechanism: 

Suppose that $\pi$ satisfies 
  \beqlb\label{regular varying levy measure}
  \bar{\pi}(u)\underset{u\rightarrow\infty}{\sim} -\frac{1}{\Gamma(-\alpha)}u^{-1-\alpha}\ell(u),
  \eeqlb
where $\bar{\pi}(u)=\pi(u,\infty)$ for $u>0$, $0<\alpha<1$ and $\ell$ is slowly varying at $\infty$. By \cite[Theorem 8.1.6]{Bingham87}, the above assumption is equivalent to
 \beqlb\label{regularly varying psi}
 \Psi(\lambda)\underset{\lambda \rightarrow 0}{\sim} \lambda^{1+\alpha}\ell(1/\lambda).
 \eeqlb
Recall that $\varphi$ is defined by (\ref{defvarphi}) and $g$ is  inverse function of $\varphi$. It follows from Karamata's theorem (\cite[P.17 and P.23]{Res87})
that 
\beqlb\label{regular varying varphi}
\varphi(\lambda)\sim\frac{\lambda^{-\alpha}}{\alpha \ell(1/\lambda)}\quad{and}\quad g(1/\lambda)\sim\lambda^{1/\alpha}\ell^*(1/\lambda), \quad \text{ as }\lambda\rightarrow0,
\eeqlb
where $\ell^*$ is slowly varying at $\infty$. We denote by $\Phi^{-1}$ the inverse function of $\Phi$.

\begin{theorem}\label{ccritical}
Assume that $b=0$ and $(\ref{regular varying levy measure})$ holds. 

{\rm (i)}
If  Condition (S) holds, let $m(x)$  be defined as in \eqref{U}. Then
\beqnn
\frac{m(Y_t)}{m(1/g(t))}\stackrel{d}{\longrightarrow} V \text{ as }t\to\infty,
\eeqnn
where $ V \text{ is uniformly distributed on } [0,1].$

{\rm (ii)} If Condition (L) holds, then 
$$g(t)Y_t\overset{d}{\longrightarrow}V_L \text{ as }t\to\infty,$$
 where $\mathbb{E}[e^{-\lambda V_L}]=(1+\lambda^\alpha)^{-a}, \forall\lambda\geq 0.$

{\rm (iii)}  If Condition (F) holds with $\delta>0$ in (\ref{regular H'}),  then 
\begin{equation}\label{cvfastcritical} \varrho_tY_t\overset{d}{\longrightarrow}V_F \text{ as }t\to\infty \end{equation}
where $\mathbb{E}[e^{-\theta V_F}]=\exp(-\theta^{\delta\alpha}), \text{ for all } \theta\geq 0$, 
with $\varrho_t=\Phi^{-1}(1/t)= t^{-1/(\delta\alpha)}\bar{\ell}(t) \text{ as }t\to\infty$ for some slowly varying function $\bar{\ell}$ at $\infty$. 

If Condition (F) holds with $\delta=0$ in (\ref{regular H'}), then $t\Phi(1/Y_t)\overset{d}{\longrightarrow}\mathrm{e}_1 \text{ as }t\to\infty$.

\end{theorem}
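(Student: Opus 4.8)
\noindent\emph{Proof proposal.} The plan is to run, in the critical case, the same ``inverse Laplace--exponent'' machine as for Theorems~\ref{no1} and \ref{cno1}: Steps~1--3 of the proof of Theorem~\ref{no1} remain valid when $b=0$ (Remark~\ref{rem3}), and only Step~4 must be replaced, this time exploiting the regular variation of $\Psi$ near $0$. I first record the common ingredients. Since $b=0$ we have $\Psi\ge 0$, so $v_t(\lambda)\downarrow 0$ by \eqref{cumulantintegral}, $v_t(\lambda)=g(\varphi(\lambda)+t)$ by \eqref{vg}, and differentiating \eqref{effdrift} gives $H'=\Phi\circ g$ (because $g'=-\Psi\circ g$), whence, by \eqref{J_lambda t} and \eqref{phig},
\[
r_t(\lambda)=\int_{v_t(\lambda)}^{\lambda}\frac{\Phi(u)}{\Psi(u)}\,\ddr u=H(\varphi(\lambda)+t)-H(\varphi(\lambda)),\qquad 0<\lambda<\lambda_0 .
\]
By \eqref{regularly varying psi} and Karamata (see \eqref{regular varying varphi}) $\varphi$ is regularly varying at $0$ with index $-\alpha$, so for fixed $\theta>0$ and any $\eta_t\downarrow 0$ one has $\varphi(\theta\eta_t)=\theta^{-\alpha}\varphi(\eta_t)(1+o(1))$; and since $v_t(\theta\eta_t)\le v_t(\lambda_0)\to 0$, the term $xv_t(\theta\eta_t)$ in \eqref{cumulantCBI2} is negligible, so all limits below are $x$-free. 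Thus in each regime it suffices to choose the right $\eta_t$ and compute $\lim_t r_t(\theta\eta_t)=\lim_t\big[H(\theta^{-\alpha}\varphi(\eta_t)(1+o(1))+t)-H(\theta^{-\alpha}\varphi(\eta_t)(1+o(1)))\big]$.

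In regime (L) take $\eta_t=g(t)$, so $\varphi(\eta_t)=t$; writing $H'(u)=(a+o(1))/u$ one sees that two arguments whose ratio tends to $c\in(0,\infty)$ produce an increment $a\ln c$, and here the ratio is $1+\theta^\alpha$, so $r_t(\theta g(t))\to a\ln(1+\theta^\alpha)$ and $\E_x[e^{-\theta g(t)Y_t}]\to(1+\theta^\alpha)^{-a}$, a genuine Laplace transform (completely monotone, value $1$ at $0$); Lévy's continuity theorem gives $g(t)Y_t\overset{d}{\to}V_L$. In regime (F) with $\delta>0$ take $\eta_t=\varrho_t=\Phi^{-1}(1/t)$; from $g=\Phi^{-1}\circ H'$ one gets $\varphi(\Phi^{-1}(y))=(H')^{-1}(y)$, hence $\varphi(\varrho_t)=h(t)$, and since $h$ is regularly varying with index $1/\delta$ (with $L\to0$ when $\delta=1$) one has $t=o(h(t))$. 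Setting $x_t=\theta^{-\alpha}h(t)(1+o(1))$ and using local uniform convergence of the regularly varying $H'$ (\cite[Theorem~1.2.1]{Bingham87}),
\[
r_t(\theta\varrho_t)=\int_{x_t}^{x_t+t}H'(u)\,\ddr u\sim tH'(x_t)\sim\theta^{\delta\alpha}\,tH'(h(t))=\theta^{\delta\alpha},
\]
so $\E_x[e^{-\theta\varrho_tY_t}]\to e^{-\theta^{\delta\alpha}}$ and $\varrho_tY_t\overset{d}{\to}V_F$; the formula $\varrho_t=t^{-1/(\delta\alpha)}\bar\ell(t)$ follows because $\Phi$ is regularly varying at $0$ with index $\delta\alpha$ ($H'=\Phi\circ g$, $g$ of index $-1/\alpha$, $H'$ of index $-\delta$).

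When (F) holds with $\delta=0$, $\Phi$ is slowly varying at $0$ and I mimic Proposition~\ref{aside}: it is enough to show $\E_x[e^{-\theta\Phi^{-1}(\lambda/t)Y_t}]\to e^{-\lambda}$, i.e.\ $r_t(\theta\Phi^{-1}(\lambda/t))\to\lambda$; with $\xi_t:=(H')^{-1}(\lambda/t)=\varphi(\Phi^{-1}(\lambda/t))$ one has $tH'(\xi_t)=\lambda$ and $\xi_t/t\to\infty$ (as $H'$ is slowly varying), whence $r_t(\theta\Phi^{-1}(\lambda/t))=\int_{\theta^{-\alpha}\xi_t(1+o(1))}^{\theta^{-\alpha}\xi_t(1+o(1))+t}H'(u)\,\ddr u\sim tH'(\xi_t)=\lambda$, giving $t\Phi(1/Y_t)\overset{d}{\to}\mathrm{e}_1$. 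Regime (S) carries no linear renormalization, so I work with the inverse $c_t$ of $r_t$: with $Q_t=\varphi(c_t(\lambda))\to\infty$ the identity $\lambda=\int_{Q_t}^{Q_t+t}H'(u)\,\ddr u\le\varepsilon\ln(1+t/Q_t)$ (any $\varepsilon>0$, eventually, since $uH'(u)\to0$) forces $Q_t=o(t)$; then $uH'(u)\to0$ also yields $H(Q_t+t)-H(t)\to0$ and $H(\theta^{-\alpha}Q_t(1+o(1)))-H(Q_t)\to0$, so $r_t(\theta c_t(\lambda))\to\lambda$ for every $\theta$ and, exactly as in Theorem~\ref{no1}, $r_t(1/Y_t)\overset{d}{\to}\mathrm{e}_1$. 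The same estimate applied to $P_t:=\varphi(1/Y_t)$ shows $P_t=o_p(t)$ and $r_t(1/Y_t)=H(t)-H(P_t)+o_p(1)$; since by \eqref{phig} $m(Y_t)/m(1/g(t))=\exp(\int_{1/Y_t}^{g(t)}\tfrac{\Phi}{\Psi})=\exp(H(\varphi(1/Y_t))-H(t))$, this gives $m(Y_t)/m(1/g(t))\overset{d}{\to}e^{-\mathrm{e}_1}$, which is uniform on $[0,1]$.

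I expect the real difficulty to sit in regime (S): proving that the random clock $\varphi(1/Y_t)$ is $o_p(t)$ and that the increment $H(\varphi(1/Y_t)+t)-H(t)$ is asymptotically negligible, so that the nonlinear functional $m$ linearizes correctly; and, throughout all regimes, keeping the $(1+o(1))$ errors coming from the regular variation of $\varphi$ under control simultaneously with the additive shift by $t$ inside $H$, which rests on the precise growth comparisons $Q_t=o(t)$ in (S), $\varphi(\eta_t)\asymp t$ in (L), and $t=o(\varphi(\eta_t))$ in (F).
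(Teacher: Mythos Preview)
Your proposal is correct and follows essentially the same route as the paper. Both arguments rest on the identity $r_t(\lambda)=H(\varphi(\lambda)+t)-H(\varphi(\lambda))$ with $H'=\Phi\circ g$, the regular variation of $\varphi$ at $0$ with index $-\alpha$ from \eqref{regular varying varphi}, and the trichotomy $\varphi(\eta_t)=o(t)$ in (S), $\varphi(\eta_t)\asymp t$ in (L), $t=o(\varphi(\eta_t))$ in (F), which you state explicitly at the end.

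The only organizational difference is that in (L) and (F) you pick the renormalization $\eta_t$ (namely $g(t)$, resp.\ $\varrho_t=\Phi^{-1}(1/t)$) directly and compute the Laplace transform, whereas the paper works through the inverse $c_t(\lambda)$ and only afterwards identifies $c_t(1)\sim\Phi^{-1}(1/t)$. Your identity $\varphi(\Phi^{-1}(1/t))=(H')^{-1}(1/t)=h(t)$ is a neat shortcut that replaces the paper's two-step argument \eqref{asymptotic H'}--\eqref{tphig}; conversely the paper's route via $c_t$ makes regime (S) and the $\delta=0$ subcase of (F) fall out of a single template. In regime (S) your argument and the paper's are the same: $\varphi(c_t(\lambda))=o(t)$ from the mean-value bound, then $r_t(1/Y_t)\overset{d}{\to}\mathrm{e}_1$, then $\varphi(1/Y_t)=o_p(t)$, then $H(\varphi(1/Y_t)+t)-H(t)=o_p(1)$. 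One small point: when you transfer the deterministic estimate ``$Q_t=o(t)$'' to the random $P_t=\varphi(1/Y_t)$, you should note (as the paper does in the proof of Theorem~\ref{cno1}) that this passage is justified either by Skorokhod representation or by the direct implication $\{P_t\ge ct\}\subset\{r_t(1/Y_t)\le\varepsilon_t\ln(1+1/c)\}$ with $\varepsilon_t=\sup_{u\ge ct}uH'(u)\to0$.
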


\begin{remark}
When $\delta\in (0,1]$, the convergence \eqref{cvfastcritical} is equivalent to the following $t\Phi(1/Y_t)\overset{d}{\longrightarrow}V_F^{-\delta\alpha}$ as $t$ goes to $\infty$. Indeed for any $\lambda>0$,  the property of regular variation of $\varrho_t$ implies that
\begin{align*}
&\;\;\P(t\Phi(1/Y_t)\geq \lambda)\\
&=\P(\Phi(1/Y_t)\geq \lambda/t)=\P(1/Y_t\geq\varrho_{t/\lambda})&\\
&=\P(\varrho_{t/\lambda}Y_t\leq 1)\sim \P(\lambda^{1/(\delta\alpha)}\varrho_tY_t\leq 1)\underset{t \to \infty}\to\P(V_F\leq \lambda^{-1/(\delta\alpha)})=\P((V_F)^{-\delta\alpha}\geq \lambda).&
\end{align*}
Since the random variable $V_F$ has a stable law, $(V_F)^{-\delta\alpha}$ is not a standard exponential random variable. Therefore, unlike in the non-critical cases, see Theorem \ref{cno1}-(iii)  for which 
$t\Phi(1/Y_t)\overset{d}{\longrightarrow}\mathrm{e}_1$
holds true when condition (F) holds, no matter what value $\delta\in [0,1]$ takes, in the critical case, the convergence to $\mathrm{e}_1$ is only true for $\delta=0.$  
\end{remark}
\proof
We start with some observations. By (\ref{vg}) and \eqref{phig}, we have
 \beqlb\label{changing variables}
 \int_{v_t(\lambda)}^\lambda\frac{\Phi(u)}{\Psi(u)}\ddr u=\int^{t+\varphi(\lambda)}_{\varphi(\lambda)}\Phi(g(u))\ddr u,\quad H(x)=\int^x_{\varphi(\lambda_0)}\Phi(g(u))\ddr u.
  \eeqlb
Then $H'(x)=\Phi(g(x)).$ Note that we will use $H'(x)$ or $\Phi(g(x))$ in different contexts. 

It was mentioned in Remark \ref{rem3} that the steps 1 and 2 in the proof of Theorem \ref{no1} still hold for the critical case. That is to say; recalling  $r_t(\lambda)$ defined by (\ref{J_lambda t}) and $\lambda\mapsto c_{t}(\lambda)$ its inverse; we have for any fixed $\lambda>0$, $c_t(\lambda){\rightarrow} 0$ and $v_t(c_t(\lambda)){\rightarrow} 0$ as $t\rightarrow\infty$. Therefore $\varphi(c_t(\lambda))\to\infty \text{ as } t\to\infty$. 

(i) By (\ref{changing variables}) with $\lambda$ replaced by $c_t(\lambda)$, applying  the mean value theorem,  there exists $\hat\theta_t$ between $\varphi(c_t(\lambda))$ and $t+\varphi(c_t(\lambda))$ such that
\beqlb\label{condition s 2}
\lambda=\int_{v_t(c_t(\lambda))}^{c_t(\lambda)}\frac{\Phi(u)}{\Psi(u)}\ddr u
=\int^{t+\varphi(c_t(\lambda))}_{\varphi(c_t(\lambda))}\Phi (g (u))\ddr u=\hat\theta_t H'(\hat\theta_t){\ln} \Big(\frac{t+\varphi(c_t(\lambda))}{\varphi(c_t(\lambda))}\Big).
\eeqlb
Together  with Condition (S), we have  that $\varphi(c_t(\lambda))/t\rightarrow0 \text{ as } t\rightarrow\infty.$ Then still by the mean value theorem and Condition (S), we obtain
\begin{equation}\label{decom}
\int^{v_t(\theta c_t(\lambda))}_{v_t(c_t(\lambda))}\frac{\Phi(u)}{\Psi(u)}\ddr u
=\int^{t+\varphi(c_t(\lambda))}_{t+\varphi(\theta c_t(\lambda))}\Phi \circ g (u)\ddr u=o(1)\ln \Big(\frac{t+\varphi( c_t(\lambda))}{t+\varphi(\theta c_t(\theta\lambda))}\Big),
\end{equation}
which goes to $0$ as $\varphi$ is regularly varying with index $-\alpha$ by (\ref{regular varying varphi}). Similarly $\int_{\theta c_t(\lambda)}^{c_t(\lambda)}\frac{\Phi(u)}{\Psi(u)}\ddr u\rightarrow0$. Hence by (\ref{rthetact}), (\ref{decompose}) and (\ref{key}), 
\beqlb\label{e_1 2}
\int_{v_{t}(1/Y_t)}^{1/Y_t} \frac{\Phi(u)}{\Psi(u)}\ddr u=\int^{t+\varphi(1/Y_t)}_{\varphi(1/Y_t)}\Phi (g(u)) \ddr u\stackrel{d}\longrightarrow \mathrm{e}_1\text{ as } t\rightarrow\infty.
\eeqlb
Applying the same transformation as in \eqref{decom} to (\ref{e_1 2}) entails that $\varphi(1/Y_t)/t\overset{p}{\rightarrow}0\text{ as } t\rightarrow\infty$. Using again Condition (S),
\beqlb\label{e_1 2'}
\int_{g(t)}^{v_t(1/Y_t)}\frac{\Phi(u)}{\Psi(u)}\ddr u=\int_{t+\varphi(1/Y_t)}^{t}\Phi (g(u)) \ddr u
=o(1)\ln \Big(\frac{t}{t+\varphi(1/Y_t)}\Big)\overset{p}{\rightarrow}0.\eeqlb
Therefore combining the above two displays \eqref{e_1 2} and \eqref{e_1 2'} yields
$$-\ln\Big(\frac{m(Y_t)}{m(1/g(t))}\Big)=\int_{g(t)}^{1/Y_t}\frac{\Phi(u)}{\Psi(u)}\ddr u\overset{d}{\longrightarrow}\mathrm{e}_1.$$
This  allows one to conclude that (i) holds true. 

(ii) By (\ref{cumulantCBI2}) and \eqref{phig}
 \beqnn
 \E_x[e^{-\lambda g(t)Y_t}]= \exp\Big\{-xv_t(g(t)\lambda)-\int^{t+\varphi(\lambda g(t))}_{\varphi(\lambda g(t))}\Phi (g(u))\ddr u\Big\}.
   \eeqnn
  We shall study the two terms in the exponential one by one.  As $t\rightarrow\infty$, $g(t)\to0$ and consequently $v_t(g(t)\lambda)\rightarrow0$. By Condition (L),
\beqnn
\int^{t+\varphi(\lambda g(t))}_{\varphi(\lambda g(t))}\Phi \circ g(u)\ddr u\sim a\int^{t+\varphi(\lambda g(t))}_{\varphi(\lambda g(t))}\frac{\ddr x}{x}=a\ln
\Big(\frac{\varphi(g(t))+\varphi(\lambda g(t))}{\varphi(\lambda g(t))}\Big),
\eeqnn
which converges to $a\ln(\lambda^{\alpha}+1)$ by (\ref{regular varying varphi}). So the statement in (ii) holds true. 

(iii) Applying the same proof as in \eqref{condition s 2},
Condition (F) implies that 
\begin{equation}\label{tphi}t/\varphi(c_t(\lambda))\rightarrow0 \text{ as }  t\rightarrow\infty.\end{equation}  
Since $\phi$ is regularly varying with index $-\alpha$, see (\ref{regular varying varphi}), $\phi(\theta c_t(\lambda))/\phi(c_t(\lambda))\rightarrow \theta^{-\alpha}$
for $\theta>0.$ Then $t/\varphi(\theta c_t(\lambda))\rightarrow0$. 

We also note that  $H'=\Phi \circ g$ is regularly varying by (\ref{regular H'}). Based on the above resulsts, a similar proof as in (\ref{similar proof}) shows that for any $\theta>0$, 
\beqlb\label{asymptotic H'}
\int_{v_{t}(\theta c_t(\lambda))}^{\theta c_t(\lambda)} \frac{\Phi(u)}{\Psi(u)}\ddr u\sim t \Phi\circ g(\varphi(\theta c_t(\lambda)))=t\Phi(\theta c_t(\lambda)),\text{ as }t\rightarrow\infty.
\eeqlb
 Letting $\theta=1$ and $\lambda=1$, by the definition of $r_t(1),c_t(1)$, the left term in the equivalence relation above equals $1$ and so we have 
\begin{equation}\label{tphig}t\Phi(c_t(1))\rightarrow1 \text{ as }  t\rightarrow\infty.\end{equation}

If (\ref{regular H'}) holds, then $\Phi=H'\circ \varphi$ is regularly varying with index $\delta\alpha$ at $0$ by (\ref{regular varying varphi}) and \cite[Proposition 0.8-(iv)]{Res87}. Using the above two displays, 
\beqnn
\int_{v_{t}(\theta c_t(1))}^{\theta c_t(1)} \frac{\Phi(u)}{\Psi(u)}\ddr u\sim t \Phi(c_t(1))\frac{\Phi(\theta c_t(1))}
{\Phi(c_t(1))}\sim \theta^{\delta\alpha}, \quad \text{ as }t\to\infty.
\eeqnn
As $c_t(1)\to 0$, we also have $v_{t}(\theta c_t(1))\to 0$ thanks to the above display. Then using \eqref{decompose} and \eqref{rthetact}, we conclude that
$\E_x[e^{-\theta c_t(1) Y_t}]\rightarrow e^{-\theta^{\delta\alpha}}$, as $t\to\infty.$
 By \cite[Proposition 0.8-(v)]{Res87}, the map $\Phi^{-1}$ is regularly
varying with index $1/(\delta\alpha)$ at $0$, and by \eqref{tphig}, we have that 
$c_t(1)=\Phi^{-1}(\Phi(c_t(1)))\sim\Phi^{-1}(1/t)$. Thus the first result in (iii) holds true.

If $\Phi \circ g$ is slowly varying at $\infty$, then $\Phi \circ g (\varphi(u))$ is slowly varying at $0$ by (\ref{regular varying varphi}). Then it follows from  (\ref{asymptotic H'}) that $\E_x[e^{-\theta c_t(\lambda) Y_t}]\rightarrow e^{-\lambda}$ as $t\rightarrow\infty$.
{ As in (\ref{key}),  we have (\ref{newe}) in this case}. The second half of (iii) follows the very similar proof of Theorem \ref{cno1}-(iii) and we omit it.\qed

\begin{remark} Recall that in the critical case when $\Phi=\Psi'$, the CBI$(\Psi,\Phi)$ process has the same law as the CB($\Psi$) processes conditioned on the non-extinction. Moreover, we readily check that $\int_0\frac{\Psi'(u)}{\Psi(u)}\ddr u=\infty$. It follows that $H'(x)=\Psi'(g(x))$ for all $x$. 
\end{remark}

We apply now Theorem \ref{PL} and Theorem \ref{ccritical} to the case of stable branching and immigration mechanisms for which explicit calculations can be done. 

\begin{corollary}[Stable case] Assume $\Psi(q)=dq^{\alpha+1}$ for $d>0$ and $\alpha\in (0,1]$ and $\Phi(q)=d'q^{\beta}$ for $d'>0$ and $\beta\in (0,1]$. Then,
\begin{itemize}
\item[i)]  if $\beta/\alpha>1$ then $Y_t\overset{d}{\longrightarrow} Y_\infty  \text{ as } t\rightarrow \infty$ where $Y_\infty$ has Laplace transform $\mathbb{E}[e^{-\lambda Y_\infty}]=e^{-\frac{\lambda^{\beta-\alpha}}{\beta-\alpha}}$ for any $\lambda \geq 0$,
\item[ii)]  if $\beta/\alpha=1$ then $t^{-\frac{1}{\alpha}}Y_t\overset{d}{\longrightarrow} (\alpha d)^{\frac{1}{\alpha}}V_L \text{ as } t\rightarrow \infty$ where $V_L$ has Laplace transform $\mathbb{E}[e^{-\lambda V_L}]=\frac{1}{(1+\lambda^{\alpha})^{\frac{d'}{\alpha d}}}$ for any $\lambda\geq 0$,
\item[iii)] if $\beta/\alpha<1$ then $t^{-\frac{1}{\beta}}Y_t\overset{d}{\longrightarrow} (d')^{1/\beta}V_F
\text{ as } t\rightarrow \infty,$ where $V_F$ has Laplace transform $\mathbb{E}[e^{-\lambda V_F}]=e^{-\lambda^{\beta}}$ for any $\lambda\geq 0$.
\end{itemize}
\end{corollary}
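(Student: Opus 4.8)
The strategy is to split into the three cases via the dichotomy of Theorem \ref{PL} and then read each one off either that theorem or the critical limit theorems of Theorem \ref{ccritical}, after checking the hypotheses and computing the auxiliary functions for the stable pair $(\Psi,\Phi)$. First note that $\Psi(q)=dq^{\alpha+1}$ is critical, $b=\Psi'(0+)=0$, and satisfies the regular variation assumption \eqref{regularly varying psi} with slowly varying part $\ell\equiv d$, so Theorem \ref{ccritical} is available whenever $0<\alpha<1$; moreover \eqref{cumulant} integrates explicitly to $v_t(\lambda)=(\lambda^{-\alpha}+\alpha d t)^{-1/\alpha}$, which is convenient both for pinning down constants and for handling the boundary case $\alpha=1$. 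The computation driving everything is $\tfrac{\Phi(u)}{\Psi(u)}=\tfrac{d'}{d}\,u^{\beta-\alpha-1}$, so that $\int_0\tfrac{\Phi(u)}{\Psi(u)}\,\ddr u<\infty$ exactly when $\beta>\alpha$; this is case i), while $\beta/\alpha\le 1$ is precisely the regime \eqref{cinfty} of cases ii) and iii).

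In case i), Theorem \ref{PL} with $\tau(t)\equiv1$ gives that $Y_t$ converges in law to a proper random variable $Y_\infty$, and letting $t\to\infty$ in \eqref{cumulantCBI2}, using $v_t(\lambda)\downarrow0$ in the critical case (so $xv_t(\lambda)\to0$) together with $r_t(\lambda)=\int_{v_t(\lambda)}^{\lambda}\tfrac{\Phi(u)}{\Psi(u)}\,\ddr u\to\int_0^{\lambda}\tfrac{\Phi(u)}{\Psi(u)}\,\ddr u=\tfrac{d'}{d}\tfrac{\lambda^{\beta-\alpha}}{\beta-\alpha}$, identifies the Laplace transform of $Y_\infty$. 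For cases ii) and iii) one computes $\varphi(\lambda)=\tfrac{1}{d\alpha}(\lambda^{-\alpha}-\lambda_0^{-\alpha})$, hence $g(x)=(d\alpha x+\lambda_0^{-\alpha})^{-1/\alpha}\sim(d\alpha x)^{-1/\alpha}$ and $H'(x)=\Phi(g(x))=d'g(x)^{\beta}\sim d'(d\alpha x)^{-\beta/\alpha}$. When $\beta=\alpha$ this is $xH'(x)\to a:=\tfrac{d'}{d\alpha}>0$, i.e.\ Condition (L), and Theorem \ref{ccritical}(ii) gives $g(t)Y_t\overset{d}{\longrightarrow}V_L$ with $\E[e^{-\lambda V_L}]=(1+\lambda^{\alpha})^{-a}$; since $g(t)\sim(d\alpha t)^{-1/\alpha}$, Slutsky's lemma converts this into $t^{-1/\alpha}Y_t\overset{d}{\longrightarrow}(d\alpha)^{1/\alpha}V_L$. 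When $\beta<\alpha$, $H'$ is regularly varying at $\infty$ with index $-\delta$, $\delta:=\beta/\alpha\in(0,1)$, and $xH'(x)\to\infty$, i.e.\ Condition (F); Theorem \ref{ccritical}(iii) gives $\varrho_tY_t\overset{d}{\longrightarrow}V_F$ with $\E[e^{-\theta V_F}]=e^{-\theta^{\delta\alpha}}=e^{-\theta^{\beta}}$ and $\varrho_t=\Phi^{-1}(1/t)=(d')^{-1/\beta}t^{-1/\beta}$, whence $t^{-1/\beta}Y_t\overset{d}{\longrightarrow}(d')^{1/\beta}V_F$.

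What remains is essentially bookkeeping — matching the $g$-, $\varphi$-, $a$- and $\delta$-dependent outputs of Theorem \ref{ccritical} with the clean stable forms claimed, and noting that the degenerate branch $\delta=0$ of Theorem \ref{ccritical}(iii) never arises here since it would force $\beta=0$. The one genuine gap is $\alpha=1$, which falls outside assumption \eqref{regular varying levy measure}; there I would argue directly from $v_t(\lambda)=(\lambda^{-1}+dt)^{-1}$ and the explicit $r_t(\lambda)$. Case i) is then vacuous ($\beta\le1=\alpha$); in case ii) ($\beta=1$) one gets $\E_x[e^{-(\lambda/t)Y_t}]\to(1+d\lambda)^{-d'/d}$, which is the Laplace transform of $(d\alpha)^{1/\alpha}V_L$; and in case iii) ($\beta<1$), after the substitution $\mu=\theta\varrho_t$ a short expansion of $(\mu^{-1}+dt)^{1-\beta}$, in which the leading power of $t$ cancels against $\mu^{\beta-1}$ and one is thrown onto the second-order term, yields $r_t(\mu)\to\theta^{\beta}$ and hence $t^{-1/\beta}Y_t\overset{d}{\longrightarrow}(d')^{1/\beta}V_F$. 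This last cancellation is the only mildly delicate point; everything else is routine verification.
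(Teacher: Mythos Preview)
Your approach is essentially the same as the paper's: compute $\varphi$, $g$ and $H'=\Phi\circ g$ explicitly for the stable pair, read off which of Conditions (L) or (F) holds, and apply Theorem~\ref{ccritical} (with Theorem~\ref{PL} handling case i)). Your separate treatment of $\alpha=1$ is actually more careful than the paper, which silently applies Theorem~\ref{ccritical} even though its stated hypothesis \eqref{regular varying levy measure} requires $0<\alpha<1$; in practice the proof of Theorem~\ref{ccritical} only uses \eqref{regularly varying psi}, which does hold for $\Psi(q)=dq^2$, so the direct computation you sketch is not strictly necessary but is a nice safeguard.
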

\begin{remark} When $\beta=\alpha$, the specific case $d'=(\alpha+1)d$ which corresponds to $\Phi=\Psi'$, has been studied by Kyprianou and Pardo \cite[Lemma 3]{KyprianouPardo} with other techniques. 
\end{remark}
\begin{proof}
First notice that $\int_{0}\frac{\Phi(u)}{\Psi(u)}\ddr u =\infty$ iff $\frac{\beta}{\alpha}\leq 1$. Statement i) is a direct consequence of Theorem \ref{PL}. Let $\lambda_0=1$, we compute  $\varphi(x)=\frac{1}{\alpha d}\left(\frac{1}{x^{\alpha}}-1\right)$ and $g(x)=\left(\frac{1}{\alpha dx+1}\right)^{\frac{1}{\alpha}}$ for $x>0$. Moreover $x\Phi(g(x))\sim \frac{d'}{(\alpha d)^{\beta/\alpha}}x^{1-\beta/\alpha}$ as $x$ goes to $\infty$. If $\beta/\alpha=1$ then Condition (L) is fulfilled for $a=\frac{d'}{\alpha d}$ and we deduce statement ii) from Theorem \ref{ccritical}. If $\beta/\alpha<1$ then Condition (F) and \eqref{regular H'} are fulfilled with $\delta=\frac{\beta}{\alpha}$. Theorem \ref{ccritical} also applies. \qed
\end{proof}

We have focused our study on continuous-state branching processes with immigration whose branching L\'evy measure has finite mean, i.e.\  $\Psi'(0+)=b\in \R$ or equivalently $\int^{\infty}_1 z\pi(\ddr z)<\infty$. The proofs of Theorem \ref{no1} and Corollary \ref{cor} however do not make use of this assumption. They hold then also true in the case of a non-explosive supercritical CBI$(\Psi,\Phi)$ with $\Psi'(0+)=-\infty$. A similar dichotomy occurs in the long-term behavior whether $\int_{0}\frac{\Phi(u)}{|\Psi(u)|}\ddr u<\infty$ or $\int_{0}\frac{\Phi(u)}{|\Psi(u)|}\ddr u=\infty$ when $b=-\infty$. In the first case, some almost-sure non-linear renormalizations can be found, see \cite{Grey2} and Foucart and Ma \cite{foucart2019} for the case without immigration. In the case $\int_{0}\frac{\Phi(u)}{|\Psi(u)|}\ddr u=\infty$, similar regimes than those found in Section \ref{threeregimes} exist, see \cite{BarbourPakes} for the case of discrete state-space processes.

\section*{Acknowledgements} 
The authors thank the anonymous referees for helpful comments. C.F is supported by the French National Research Agency (ANR): LABEX MME-DII (ANR11-LBX-0023-01). C.M is supported by the NSFC of China (11871032) and GXNSF (2018GXNSFAA050031). L.Y is supported by the NSFC of China (11801458).

\providecommand{\bysame}{\leavevmode\hbox to3em{\hrulefill}\thinspace}
\providecommand{\href}[2]{#2}

\end{document}